\documentclass[11pt, reqno]{amsart}
\usepackage[dvipsnames,usenames]{color}
\usepackage[colorlinks=true, urlcolor=NavyBlue, linkcolor=NavyBlue, citecolor=NavyBlue]{hyperref}
\usepackage{cleveref}
\usepackage{graphicx}
\usepackage{epsfig}
\usepackage[latin1]{inputenc}
\usepackage{amsmath}
\usepackage{amsfonts}
\usepackage{amssymb}
\usepackage{amsthm}
\usepackage{amscd}
\usepackage{verbatim}
\usepackage{subfigure}
\usepackage{caption}
\usepackage{pinlabel}
\usepackage{stmaryrd}
\usepackage{enumerate, enumitem}
\usepackage{todonotes}
\usepackage{bm}
\usepackage{thmtools}
\usepackage{thm-restate}
\usepackage{lipsum}
\usepackage{setspace}
\usepackage{mathtools}
\usepackage[all]{xypic}
\usepackage[abs]{overpic}
\usepackage{color}
\usepackage[normalem]{ulem}
\usepackage[alphabetic,backrefs,msc-links]{amsrefs}
\usepackage{mathdots}
\usepackage{comment}

\allowdisplaybreaks

\usepackage{tikz}
\usetikzlibrary{arrows}
\usetikzlibrary{decorations.pathreplacing}
\usepackage{verbatim}
\usetikzlibrary{cd}
\tikzset{taar/.style={double, double equal sign distance, -implies}}
\tikzset{amar/.style={->, dotted}}
\tikzset{dmar/.style={->, dashed}}
\tikzset{aar/.style={->, very thick}}

    \oddsidemargin  0.0in
    \evensidemargin 0.0in
    \textwidth      6.5in
    \headheight     0.0in
    \topmargin      0.0in
    \textheight=9in

\newtheorem{theorem}{Theorem}[section]

\newtheorem{lemma}[theorem]{Lemma}
\newtheorem{proposition}[theorem]{Proposition}

\newtheorem{corollary}[theorem]{Corollary}

\theoremstyle{definition}
\newtheorem{definition}[theorem]{Definition}

\theoremstyle{remark}
\newtheorem{remark}[theorem]{Remark}

\def\F{\mathbb{F}}

\def\Q{\mathbb{Q}}
\def\R{\mathbb{R}}
\def\Z{\mathbb{Z}}



\def\cF{\mathcal{F}}

\def\cT{\mathcal{T}}
\def\cR{\mathcal R}



\def \gr {\operatorname{gr}}

\def\Cone{\operatorname{Cone}}

\def\d{\partial}

\def\spinc{\textrm{Spin}^c}

\def\red{\textup{red}}

\def\Cone{\operatorname{Cone}}


\def\HF {\mathit{HF}}
\newcommand\HFhat{\widehat{\HF}}

\def\CFK{\mathit{CFK}}
\def\CFKm{\mathit{CFK}^-}
\def\CFKi{\mathit{CFK}^\infty}

\newcommand\HFKhat{\widehat{\mathit{HFK}}}

\def\spinc {{\operatorname{spin^c}}}

\def\s{\mathfrak s}

\def\HFKhat{\widehat{\mathit{HFK}}}
\def\CFKhat{\widehat{\mathit{CFK}}}

\def\HFred{\HF_{\operatorname{red}}}






\newcommand{\lab}[1]{$\scriptstyle #1$}

\newcommand\Wh{\mathit{Wh}}
\newcommand\HE{\mathit{HE}}
\newcommand\sh{\mathrm{sh}}
\newcommand\bbR{\mathbb{R}}
\newcommand\caR{\mathcal{R}}
\newcommand\CH{\mathit{CH}}
\newcommand\std{\mathrm{std}}

\newcommand{\tyeq}[1]{{\color{blue}TL: #1}}

\author[S. Eli]{Sean Eli}
\thanks{SE was partially supported by NSF grant DMS-2203312.}
\address {School of Mathematics, Georgia Institute of Technology, Atlanta, GA 30332}
\email{seaneli@gatech.edu}

\author[J.\ Hom]{Jennifer Hom}
\thanks{JH was partially supported by NSF grants DMS-2104144 and DMS-2506400, and Georgia Tech's Elaine M. Hubbard Faculty Fellowship.}
\address {School of Mathematics, Georgia Institute of Technology, Atlanta, GA 30332}
\email{hom@math.gatech.edu}

\author[T. Lidman]{Tye Lidman}
\thanks{TL was partially supported by NSF grants DMS-2105469 and DMS-2506277 and a Simons Travel Support award.}
\address{Department of Mathematics, North Carolina State University, Raleigh, NC 27607}
\email{tlid@math.ncsu.edu}

\numberwithin{equation}{section}

\title{Distinguishing exotic $\mathbb{R}^4$'s with Heegaard Floer homology}

\begin{document}

\begin{abstract}
Attaching a Casson handle to a slice disk complement yields a smooth 4-manifold that is homeomorphic to $\R^4$. 
We show that if two slice knots have sufficiently different knot Floer homology, then the resulting exotic $\R^4$'s made using the simplest positive Casson handle are not diffeomorphic, giving us a countably infinite family of pairwise nondiffeomorphic chiral exotic $\R^4$'s.  Our main tool is Gadgil's end Floer homology and we use this to produce families of exotic $\R^4$ with various phenomena.  
As an application, we reprove a result of Bi{\v z}aca-Etnyre that $Y \times \R$, where $Y$ is any closed $3$-manifold, has infinitely many distinct smooth structures. 
\end{abstract}

\maketitle

\section{Introduction}

A surprising phenomenon in smooth manifold topology unique to four dimensions is the existence of exotic $\mathbb{R}^4$'s: smooth manifolds homeomorphic but not diffeomorphic to $\mathbb{R}^4$. These manifolds were first confirmed to exist in the 1980's as a consequence of the smooth failure (Donaldson \cite{donaldson}) and topological success (Freedman \cite{freedman}) of the $h$-cobordism theorem for 4-manifolds. Shortly after their discovery there was a wave of progress \cite{gompfinfinite,taubesend, freedmandemichelis, gompfmenagerie , bizacagompf} towards the classification problem for exotic $\mathbb{R}^4$'s,
but many elementary questions in this realm still remain open \cite{bizacagompf, gompfstipsicz, Taylor}. 

We consider \textit{slice $\R^4$'s}, potentially exotic $\R^4$'s obtained by attaching a Casson handle to a slice disk complement and removing the remaining boundary. See Figure \ref{fig:example} for an example. 
A natural question to ask is when this construction produces distinct exotic $\R^4$'s. In the case that the Casson handle is fixed and the slice disk complement is varied, only one diffeomorphism type of exotic $\R^4$ has been known to arise.
We distinguish an infinite family using the knot Floer homology of $K$.

\begin{theorem}\label{thm:exoticR4}
Let $\caR$ be built from attaching the Casson handle $\CH^+$ to a slice disk complement $(B^4,S^3)-\nu(D^2,K)$ for a nontrivial slice knot $K$ and removing the boundary. 
\begin{enumerate}
	\item\label{it:exoticR4-1} The manifold $\caR$ is an exotic $\R^4$. 
	\item\label{it:exoticR4-2} Moreover, if $K_1$ and $K_2$ are two nontrivial slice knots whose knot Floer homology $\HFKhat_\red$ has different maximal nontrivial Maslov gradings, then the exotic $\R^4$'s $\caR_1$ and $\caR_2$ built as above are not diffeomorphic with any choice of orientations. 
\end{enumerate}
\end{theorem}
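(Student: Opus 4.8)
The plan is to use Gadgil's end Floer homology as a diffeomorphism invariant of the smooth structure near the end of $\caR$, and to identify this invariant with data coming from the knot Floer homology of $K$. First I would recall the construction: attaching the simplest positive Casson handle $\CH^+$ to the slice disk complement $(B^4,S^3)-\nu(D^2,K)$ produces a manifold with one end, and this end admits an exhaustion by compact pieces whose boundaries are a sequence of closed $3$-manifolds $Y_n$ built from $0$-surgery on iterated Whitehead doubles (or Bing doubles) of $K$. The end Floer homology $\HE$ is then a suitable (inverse or direct) limit of the Heegaard Floer homologies $\HF^\circ(Y_n)$ together with the maps induced by the cobordisms between consecutive levels; crucially, $\HE$ is a diffeomorphism invariant of the end, hence of $\caR$ itself since $\caR$ is an exotic $\R^4$ (part \ref{it:exoticR4-1}) and all its topology is concentrated in the end.

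Next I would compute this invariant in terms of $K$. The key input is a formula expressing $\HF^\circ$ of the relevant surgeries on Whitehead doubles in terms of the knot Floer complex $\CFK^\infty(K)$ — for Whitehead doubles this goes through the satellite/bordered Floer machinery or Hedden's formula for $\HFKhat$ of Whitehead doubles, combined with the large surgery formula. The upshot should be that the nontrivial ``reduced'' part of the end Floer homology, and in particular its top Maslov grading, is controlled by the maximal nontrivial Maslov grading of $\HFKhat_\red(K)$: the Casson handle construction, being a limit of doubling operations, stabilizes this grading data rather than washing it out. I would package this as: the maximal Maslov grading in which $\HE_\red(\caR)$ is nonzero equals (some explicit shift of) the maximal nontrivial Maslov grading of $\HFKhat_\red(K)$.

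Given this identification, part \ref{it:exoticR4-2} follows formally: if $K_1$ and $K_2$ have $\HFKhat_\red$ supported in different maximal Maslov gradings, then $\HE_\red(\caR_1)$ and $\HE_\red(\caR_2)$ have different maximal nonzero gradings, so they are not isomorphic as graded groups, hence $\caR_1 \not\cong \caR_2$. To handle ``any choice of orientations,'' I would observe that reversing orientation of $\caR_i$ corresponds to reversing orientation of the $3$-manifolds at the end, which dualizes Heegaard Floer homology and replaces $K_i$ by its mirror $\overline{K_i}$; since $\HFKhat_\red(\overline K)$ has maximal Maslov grading equal to minus the minimal Maslov grading of $\HFKhat_\red(K)$, I would need that the relevant invariant still separates the two families — this is where a symmetry bookkeeping argument is needed, possibly strengthening the hypothesis to compare both the maximal and minimal gradings, or invoking the conjugation symmetry $\HFKhat_d(K,s)\cong\HFKhat_{d-2s}(K,-s)$ to reduce the orientation-reversed case to the original one.

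The main obstacle I anticipate is the middle step: proving that the end Floer homology genuinely detects the top Maslov grading of $\HFKhat_\red(K)$ and does not collapse under the infinite iterated doubling. Showing that the $\HF^\circ(Y_n)$ stabilize (so that the limit is nontrivial and computable) and tracking the grading through the infinitely many cobordism maps in the Casson tower — in particular controlling the Maslov grading shifts of those maps — is the technical heart of the argument. A secondary subtlety is making sure the end Floer homology is insensitive to the (non-unique) choice of exhaustion and to which positive Casson handle is used, though since $\CH^+$ is fixed this last point should be automatic from Gadgil's invariance result.
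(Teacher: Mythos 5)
Your overall strategy matches the paper's: exhaust $\caR$ by compact pieces with boundaries $S^3_0(\Wh^i(K))$, take the direct limit of $\HFred$ under the cobordism maps, and show the top grading of the limit is determined by the top grading of $\HFKhat_\red(K)$. But there are two genuine gaps. First, your treatment of orientation reversal would fail. Reversing the orientation of $\caR$ does not merely replace $K$ by its mirror; it replaces the Casson handle $\CH^+$ by the all-negative handle $\CH^-$, so the end of $\overline{\caR}$ is built from \emph{negatively} clasped Whitehead doubles. No amount of conjugation-symmetry bookkeeping on $\HFKhat$ of the mirror knot addresses this, and ``strengthening the hypothesis to compare both the maximal and minimal gradings'' would prove a weaker theorem than the one stated. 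The actual resolution is a separate computation (the paper's Theorem~\ref{thm:negativedouble}): the $2$-handle cobordism map $\HFred(S^3_0(K))\to\HFred(S^3_0(\Wh^-(K)))$ is identically zero, by a rank count in the surgery exact triangle. Hence $\HE(\overline{\caR},\s)=0$ for every $\caR$ in the family, while $\HE(\caR,\s)\neq 0$, which rules out orientation-reversing diffeomorphisms outright.

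Second, the step you correctly flag as ``the technical heart'' --- that the infinitely many cobordism maps are injective on the top-graded summand rather than washing it out --- is the entire content of the theorem, and your proposal gives no mechanism for it. The paper proves it (Theorem~\ref{thm:0-surgerycobordism}, Lemma~\ref{lem:gradinginjection}) by placing each $1$-~and~$2$-handle cobordism into a surgery exact triangle whose third vertex is $-1$-surgery on $J\# K$ in $S^3_0(T_{2,3})$; computing $\CFKm(Y,J)$ via $d$-invariants and an L-space-knot rigidity argument, then applying the K\"unneth and mapping cone formulas, pins down all three groups, after which gradings and exactness force injectivity on the top summand. Relatedly, you assume the graded limit is well defined, but Gadgil's invariant carries no absolute grading; the paper must introduce grading-admissible exhaustions ($b_2(X_i)=b_3(X_i)=0$) to show the cobordism grading shifts are controlled by $b_1(Y_i)$ and vanish here. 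Without these three ingredients the argument does not close.
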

\noindent These exotic $\R^4$'s are chiral. See Section \ref{sec:Whitehead} for the definition of $\HFKhat_\red$.  (For nontrivial, thin, slice knots, the maximal nontrivial grading of $\HFKhat_\red$ is the same as that of $\HFKhat$.)   
Note that the above result does not depend on the choice of slice disk. The proof relies on Gadgil's end Floer homology \cite{gadgil}.

Theorem \ref{thm:exoticR4} immediately gives infinitely many slice $\R^4$'s made with the same Casson handle:

\begin{corollary}\label{cor:exoticR4}
Let $K_n$ denote the ribbon knots $T_{2,n} \# T_{2,-n}$, for $n \geq 3$ and odd.  Then the exotic $\R^4$'s $\caR_n$ made with standard disk complements as in Theorem \ref{thm:exoticR4} are pairwise nondiffeomorphic.
\end{corollary}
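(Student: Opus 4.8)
The plan is to apply Theorem~\ref{thm:exoticR4}\eqref{it:exoticR4-2} directly, so the only real work is to verify its hypotheses for the infinite family $K_n = T_{2,n} \# T_{2,-n}$. First I would confirm that each $K_n$ is a nontrivial slice knot: it is well known that $T_{2,n}\#T_{2,-n}$ (equivalently $T_{2,n}\#\overline{T_{2,n}}$ up to orientation conventions, since $T_{2,-n}$ is the mirror of $T_{2,n}$) bounds a ribbon disk in $B^4$, and it is nontrivial for $n\geq 3$ because, e.g., its Alexander polynomial is that of $T_{2,n}$ times that of $T_{2,-n}$, which is nontrivial. So each $\caR_n$ is an exotic $\R^4$ by part~\eqref{it:exoticR4-1}, and each is built with a standard disk complement.

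The key step is the computation of the maximal nontrivial Maslov grading of $\HFKhat_\red(K_n)$. Since $T_{2,n}$ is an alternating (hence thin) knot, so is its mirror, and $\HFKhat$ is multiplicative under connected sum: $\HFKhat(K_n) \cong \HFKhat(T_{2,n}) \otimes \HFKhat(T_{2,-n})$, with Alexander and Maslov gradings adding. For a thin knot, $\HFKhat$ is determined by the Alexander polynomial and signature, and in particular $K_n$ is again thin (a tensor product of thin complexes is thin), with $\tau(K_n)=0$ since $K_n$ is slice. Using the parenthetical remark in the statement of Theorem~\ref{thm:exoticR4} — for nontrivial thin slice knots, the top nontrivial grading of $\HFKhat_\red$ agrees with that of $\HFKhat$ — I would instead compute the top Maslov grading of $\HFKhat(K_n)$. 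The top Alexander grading of $\HFKhat(T_{2,n})$ is $(n-1)/2$ and of $\HFKhat(T_{2,-n})$ is also $(n-1)/2$, so the top Alexander grading of $\HFKhat(K_n)$ is $n-1$; for a thin knot with $\tau=0$, the Maslov grading in Alexander grading $a$ lies on the line $M = a$ (the $\delta=0$ diagonal, after accounting for the signature contributions of the two mirror-paired summands canceling), so the maximal nontrivial Maslov grading of $\HFKhat_\red(K_n)$ is $n-1$. One should double-check this normalization against the known generator-by-generator description of $\HFKhat(T_{2,n})$ (a staircase of $n$ generators in Maslov gradings $0,-1,\dots$ and Alexander gradings $(n-1)/2, (n-1)/2 - 1, \dots$) and its mirror; tensoring and extracting $\HFKhat_\red$ (quotienting out the distinguished tower) gives a concrete answer whose top Maslov grading is a strictly increasing function of $n$.

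Once this computation is in hand, the conclusion is immediate: for $n \neq m$ (both odd, $\geq 3$) the top nontrivial Maslov gradings of $\HFKhat_\red(K_n)$ and $\HFKhat_\red(K_m)$ differ, so by Theorem~\ref{thm:exoticR4}\eqref{it:exoticR4-2} the manifolds $\caR_n$ and $\caR_m$ are not diffeomorphic with any choice of orientations, proving pairwise nondiffeomorphism of the family. The main obstacle is purely bookkeeping: getting the Maslov-grading normalization of $\HFKhat_\red$ of a connected sum of mirror torus knots exactly right, including confirming that the "reduced" part indeed retains the top-grading generator rather than having it swallowed by the tower — but since $K_n$ is nontrivial this is guaranteed, and the parenthetical in Theorem~\ref{thm:exoticR4} lets us sidestep the subtlety entirely by working with $\HFKhat$ instead.
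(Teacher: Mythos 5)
Your proposal is correct and follows essentially the same route as the paper: the paper's proof is exactly the combination of Theorem \ref{thm:exoticR4}\eqref{it:exoticR4-2} with the observation (Remark \ref{rem:t2n}) that $K_n$ is alternating, hence thin, so that $\max\{m \mid \HFKhat_m(K_n)\neq 0\} = n-1$ is determined by the Alexander polynomial and signature and is strictly increasing in $n$. Your grading bookkeeping (top Alexander grading $n-1$, $\delta=0$ since $\sigma(K_n)=0$, and the parenthetical identifying the top gradings of $\HFKhat$ and $\HFKhat_\red$ for nontrivial thin slice knots) matches the paper's computation.
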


The exotic $\R^4$'s of Theorem \ref{thm:exoticR4} are made using the Casson handle $\CH^+$ with a single positive double point at each stage. By considering orientations it follows that any slice $\R^4$ made with the simplest all-negative Casson handle $CH^-$ is exotic, and we can distinguish members of this family by Theorem \ref{thm:exoticR4} \eqref{it:exoticR4-2}. Furthermore this shows that any slice $\R^4$ made using a linear-chain Casson handle with only finitely many double points of one sign is exotic: see Remark \ref{rem:mixedsigns}. We can understand slice $\R^4$'s made with more general Casson handles as well, and obtain exotica with different properties than those of Theorem \ref{thm:exoticR4}:

\begin{theorem}\label{thm:branching}
Let $K$ be the Whitehead double of a nontrivial slice knot, and let $\caR$ be built from attaching a Casson handle $\CH$ to a slice disk complement $(B^4,S^3)-\nu(D^2,K)$ and removing the boundary. If $\CH$ has an infinite positive chain, we have:
\begin{enumerate}
\item \label{it:branching1} $\caR$ is an exotic $\R^4$.
\item \label{it:branching2} If, in addition, $\CH$ has an infinite negative chain, then $\caR$ is not diffeomorphic to any exotic $\R^4$ of Theorem \ref{thm:exoticR4} with any choice of orientations. 
\end{enumerate}
\end{theorem}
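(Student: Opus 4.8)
The plan is to use Gadgil's end Floer homology as a diffeomorphism invariant of the end of a slice $\R^4$, together with a structural analysis of how the Casson handle contributes to this invariant. First, I would establish part \eqref{it:branching1} in parallel with Theorem \ref{thm:exoticR4}\eqref{it:exoticR4-1}: the manifold $\caR$ is homeomorphic to $\R^4$ by Freedman's theorem, since a Casson handle is topologically a standard handle and the slice disk complement union a standard $2$-handle is a homotopy ball with standard boundary. To see that $\caR$ is not diffeomorphic to the standard $\R^4$, I would exhibit a compact codimension-zero submanifold whose end Floer homology (or, concretely, the $d$-invariants / $\HFp$ of the $3$-manifolds exhausting the end) is nonstandard; here the hypothesis that $K$ is a Whitehead double of a nontrivial slice knot $J$ guarantees, via known computations of $\HFKhat$ and $\HFKhat_\red$ of Whitehead doubles (the maximal nontrivial Maslov grading is forced to be positive exactly when $J$ is nontrivial, by Hedden's formula), that the relevant end invariant is nonzero. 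The presence of an infinite positive chain in $\CH$ is what lets the Casson handle be replaced, after reembedding, by a standard open $2$-handle in the topological category while retaining a nontrivial smooth end invariant detected by $\HFKhat_\red$.

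The heart of the argument is part \eqref{it:branching2}: I want to show $\caR$ (with an infinite positive \emph{and} an infinite negative chain) is not diffeomorphic, in either orientation, to any $\caR'$ arising from Theorem \ref{thm:exoticR4}, which is built with the all-one-sign handle $\CH^+$. The strategy is to identify an asymmetry in the end Floer homology that is visible from $\CH^+$ but not from the doubly-infinite-chain $\CH$. For a slice $\R^4$ built with $\CH^+$, the exhausting sequence of $3$-manifolds is obtained by iterated Whitehead-type satellite operations of a fixed sign, so the end Floer invariant has a one-sided growth or gradedness property (all the ``new'' contributions sit in gradings of one sign). For $\caR$ with $\CH$ containing chains of both signs, the end invariant picks up contributions in gradings of both signs at infinitely many stages, and this two-sidedness cannot be matched by $\CH^+$ no matter which orientation we choose — reversing orientation flips all signs simultaneously, so it cannot convert a one-sided pattern into a two-sided one. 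The key technical input here is a computation, analogous to what the paper uses for Theorem \ref{thm:exoticR4}, of how $\HFp$ (or the end Floer homology) of the boundary $3$-manifold changes under attaching a Casson handle stage of each sign, together with the fact that these stage-$3$-manifolds are the ones appearing in Gadgil's direct/inverse system for the end.

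The main obstacle I anticipate is making precise and invariant the notion of ``the end Floer homology sees contributions of both signs infinitely often'' in a way that (a) is manifestly a diffeomorphism invariant of the end, independent of the chosen exhaustion, and (b) genuinely distinguishes $\CH$ from $\CH^+$ rather than being washed out by the satellite/cabling operations or by the freedom to reembed a Casson handle inside itself (the Casson handle ``reimbedding'' phenomenon, which is exactly what makes distinguishing Casson handles subtle). I would handle (a) by phrasing everything in terms of Gadgil's functorial invariant and its grading structure, and (b) by a careful analysis showing that the relevant grading shifts under a positive stage versus a negative stage of the Casson handle have opposite signs and are detected after passing to the limit — essentially a monotonicity argument in the direct system. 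A secondary, more bookkeeping-level obstacle is verifying that the Whitehead-double hypothesis on $K$ is exactly what is needed to ``seed'' the end invariant with a nonzero class that the Casson-handle stages then propagate; this should follow from Hedden's computation of $\HFKhat$ of Whitehead doubles combined with the $\HFKhat_\red$ grading bookkeeping already set up in Section \ref{sec:Whitehead}.
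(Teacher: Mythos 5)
Your proposal does not follow the paper's argument, and in both parts the step you leave vague is exactly the step that carries the proof.

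For part \eqref{it:branching1}, the difficulty you do not address is that for a general Casson handle $\CH$ with branching and possibly negative clasps off the infinite positive chain, the boundary $3$-manifolds of the natural exhaustion are \emph{not} the $0$-surgeries $S^3_0(\Wh^i(K))$ of Lemma \ref{lem:sliceR4exhaustion}, so Theorem \ref{thm:0-surgerycobordism} does not apply directly and there is no ready-made computation of the direct system. The paper's resolution is a plugging/factorization trick: writing $X_n^+=X_n\cup Z_n$, where $Z_n$ consists of $2$-handles attached along $0$-framed meridians plugging every top-stage plumbed handle of $X_n$ except the one on the positive chain, one gets $F_{W^+_{1,j}}=F_{Z_j}\circ F_{W_{1,j}}$; since the composite is injective on the top graded summand (Theorem \ref{thm:0-surgerycobordism}, using that $K$ is a Whitehead double), so is $F_{W_{1,j}}$, and the limit is nonzero. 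Your appeal to ``reembedding'' and to Hedden's formula alone does not substitute for this comparison.

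For part \eqref{it:branching2}, the invariant you propose --- a ``two-sided'' versus ``one-sided'' distribution of gradings in $\HE$ --- is not the mechanism that works, and you correctly flag that you cannot make it precise. By Theorem \ref{thm:negativedouble} the negatively clasped doubling cobordism induces the \emph{zero} map on $\HFred$, so a negative chain does not contribute classes in ``negative gradings''; rather, for the manifolds of Theorem \ref{thm:exoticR4} it forces $\HE(\overline{\caR'},\s)=0$ for the reversed orientation. The paper's actual distinguishing invariant is the unordered pair $\bigl(\HE(\caR,\s),\,\HE(\overline{\caR},\s)\bigr)$: when $\CH$ has both an infinite positive and an infinite negative chain, part \eqref{it:branching1} applied to $\caR$ and to $\overline{\caR}$ (whose Casson handle again has an infinite positive chain, and whose knot is still a genus-one slice knot with box-plus-generator complex, cf.\ Remark \ref{rem:0-surgerycobordism}) shows both groups are nonzero, whereas every manifold of Theorem \ref{thm:exoticR4} has exactly one of the two vanishing. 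As written, your argument has a genuine gap at both of these points.
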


An example of a Casson handle with infinite negative and positive chains is $\CH^*$, shown truncated in Figure \ref{fig:branchinghandle}. The class of Casson handles having an infinite positive chain includes the Stein Casson handles described by Gompf \cite{gompfstipsicz}, which are those used in Gadgil's slice $\R^4$'s \cite{gadgil}. In particular item (\ref{it:branching1}) of Theorem \ref{thm:branching} gives another proof that the slice $\R^4$'s of Gadgil are exotic, in the case where the starting knot is a Whitehead double. 

The $\R^4$'s given in item (\ref{it:branching2}) of Theorem \ref{thm:branching} have nonvanishing end Floer homology with any choice of orientation, unlike the $\R^4$'s of Theorem \ref{thm:exoticR4}. Complementing this, we have another type of exotic $\R^4$ not diffeomorphic to those of Theorems \ref{thm:exoticR4} or \ref{thm:branching} with any choice of orientations:

\begin{theorem}\label{thm:endfloerzero}
Let $\caR$ be built from attaching the Casson handle $\CH^+$ to a slice disk complement $(B^4,S^3)-\nu(D^2,K)$ for a nontrivial slice knot $K$ and removing the boundary. The manifold $\caR\natural\overline{\caR}$ is an exotic $\R^4$ whose end Floer homology vanishes with any choice of orientations.
\end{theorem}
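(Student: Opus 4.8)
The plan is to determine the homeomorphism type of $\caR\natural\overline{\caR}$, show that its end Floer homology vanishes, and then conclude exoticness by comparison with the standard $\R^4$, whose end Floer homology is nonzero. For the homeomorphism type: by Theorem~\ref{thm:exoticR4}\eqref{it:exoticR4-1} the manifold $\caR$, and hence $\overline{\caR}$, is homeomorphic to $\R^4$; the end connected sum along properly embedded rays (legitimate since $\caR$ is one-ended) of two smooth open $4$-manifolds homeomorphic to $\R^4$ is again a contractible, one-ended smooth $4$-manifold that is simply connected at infinity, hence homeomorphic to $\R^4$ by Freedman. Since end sum is commutative and $\overline{\overline{\caR}}=\caR$, we have $\overline{\caR\natural\overline{\caR}}\cong\overline{\caR}\natural\caR\cong\caR\natural\overline{\caR}$, so the two orientations give the same smooth manifold and it suffices to compute a single end Floer homology group.

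For the vanishing: because the end sum is performed along rays, an exhausting sequence for $\caR\natural\overline{\caR}$ can be chosen so that its boundary $3$-manifolds have the form $Y_n\#(-Y_n)$, where $Y_n$ (resp.\ $-Y_n$) arises from a compatible exhausting sequence for $\caR$ (resp.\ $\overline{\caR}$), and the cobordism between consecutive levels is the boundary connected sum of the corresponding cobordisms for $\caR$ and $\overline{\caR}$. Applying the connected sum formula for Heegaard Floer homology levelwise (summing over $\Spinc$ structures as in the body) and then passing to the direct limit defining $\HE$, the end Floer chain complex of $\caR\natural\overline{\caR}$ is quasi-isomorphic to a derived tensor product over $\F[U]$ of the end Floer chain complexes of $\caR$ and of $\overline{\caR}$ in the relevant flavors. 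By the analysis underlying Theorem~\ref{thm:exoticR4} --- where the orientation of $\caR$ genuinely matters because $\CH^+$ is assembled from positively clasped kinky handles --- the end Floer homology of $\caR$ vanishes for one of its two orientations; after possibly replacing $\caR$ by $\overline{\caR}$ (which does not change $\caR\natural\overline{\caR}$) we may assume this is the orientation on $\overline{\caR}$. Then the end Floer chain complex of $\overline{\caR}$ is acyclic, so the derived tensor product above is acyclic, and $\HE(\caR\natural\overline{\caR})=0$; by the previous paragraph this holds for either orientation.

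For exoticness: exhausting the standard $\R^4$ by round balls gives boundary $3$-manifolds all equal to $S^3$ and product cobordisms, so its end Floer homology is $\HFo(S^3)\neq 0$; since end Floer homology is a diffeomorphism invariant of the end, $\caR\natural\overline{\caR}$ is not diffeomorphic to the standard $\R^4$, and together with the homeomorphism above this makes it an exotic $\R^4$. The same comparison --- using that the exotica of Theorems~\ref{thm:exoticR4} and \ref{thm:branching}\eqref{it:branching2} have nonzero end Floer homology for at least one orientation --- shows it is not diffeomorphic to any of those either.

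I expect the main obstacle to be the middle paragraph: making precise that the end Floer homology of an end sum is computed by a derived tensor product over $\F[U]$ --- that is, verifying that the end sum genuinely presents the boundary $3$-manifolds as connected sums joined by split (boundary-connected-sum) cobordisms, and that the connected sum formula for Heegaard Floer cobordism maps is compatible with the structure maps of the direct limit --- together with extracting from the proof of Theorem~\ref{thm:exoticR4} the precise statement that reversing orientation forces the end Floer homology to vanish. Granting those inputs, the step ``a derived tensor product with an acyclic complex is acyclic'' is formal.
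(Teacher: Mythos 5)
Your exoticness argument contains a genuine error. You claim that the standard $\R^4$, exhausted by round balls, has end Floer homology $\HFo(S^3)\neq 0$. But $\HE$ is defined as the direct limit of the \emph{reduced} groups $\HFred(Y_i)$, and $\HFred(S^3)=0$; the paper explicitly cites \cite[Proposition 1.5]{gadgil} for the fact that the standard $\R^4$ has \emph{vanishing} end Floer homology. Since you have just argued that $\HE(\caR\natural\overline{\caR})=0$ as well, end Floer homology cannot distinguish $\caR\natural\overline{\caR}$ from the standard $\R^4$ --- that is precisely the point of this theorem, which exhibits an exotic $\R^4$ invisible to the invariant. The paper instead deduces exoticness from Gompf's theorem that there are no inverses under end sum \cite[Corollary A.3]{gompfinfinite}: if $\caR\natural\overline{\caR}$ were standard, then $\overline{\caR}$ would be an end-sum inverse of the exotic $\caR$, a contradiction. (Your comparison with the exotica of Theorems~\ref{thm:exoticR4} and~\ref{thm:branching}\eqref{it:branching2} is fine, since those have nonvanishing $\HE$ for some orientation; it is only the comparison with the standard $\R^4$ that fails.)

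Your vanishing argument is in the same spirit as the paper's: exhaust $\caR\natural\overline{\caR}$ so that each boundary level is a connected sum joined by split cobordisms, and kill the direct limit using the negatively clasped side. But the paper's mechanism is sharper than ``derived tensor product with an acyclic complex'': it invokes Theorem~\ref{thm:negativedouble}, which says that the individual cobordism maps on $\HFred$ for the negative-clasp side are literally the zero map, so that by the K\"unneth formula every structure map in the direct system for $\caR\natural\overline{\caR}$ vanishes. Note also that $\HE$ is defined in the paper as a direct limit of homology groups, not as the homology of an ``end Floer chain complex,'' so the derived tensor product you invoke is not an object the paper supplies; vanishing of the direct limit $\HE(\overline{\caR})$ alone is a weaker input than the levelwise vanishing of the maps, and you would need the latter (i.e., Theorem~\ref{thm:negativedouble}) to run the argument cleanly.
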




The exotic $\bbR^4$'s $\caR_n$ of Corollary \ref{cor:exoticR4} can be end-summed on to certain noncompact 4-manifolds to obtain infinitely many different smoothings, detectable via end Floer homology. Rather than giving the most general statement, we focus on a particular instance of this which partially recovers the following result due to Bi{\v z}aca and Etnyre \cite{bizacaetnyre}, though our smoothings have different properties (see Remark \ref{rem:taylorinvariant}).

\begin{theorem}\label{thm:endsumproduct}
Let $Y$ be any closed 3-manifold, not necessarily orientable. Then $Y\times \bbR$ has infinitely many nondiffeomorphic smooth structures.
\end{theorem}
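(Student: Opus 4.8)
The plan is to realize the smoothings as end-sums of $Y \times \bbR$ with the exotic $\bbR^4$'s $\caR_n$ of Corollary \ref{cor:exoticR4}, and to separate them with end Floer homology: the behaviour of $\HFhat$ under connected sum forces the top nonzero Maslov grading of the end Floer homology of the resulting end to equal a quantity depending only on $Y$ plus the corresponding grading of $\HE(\caR_n)$, and the latter is exactly what distinguishes the $\caR_n$.

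For each odd $n \geq 3$, fix a proper ray in $Y \times \bbR$ running out the positive end $Y \times \{+\infty\}$ and a proper ray in $\caR_n$, and let $\mathcal{S}_n := (Y \times \bbR) \natural \caR_n$ be the corresponding end-sum. Since $\caR_n$ is homeomorphic to $\bbR^4$, and end-summing a $4$-manifold with a manifold homeomorphic to $\bbR^4$ does not change its homeomorphism type (end-summing with $\bbR^4$ is topologically the identity; see \cite{gompfinfinite,gompfstipsicz}), each $\mathcal{S}_n$ is homeomorphic to $Y \times \bbR$, so these really are smooth structures on $Y \times \bbR$. The end-sum merges the positive end of $Y \times \bbR$ with the unique end of $\caR_n$ into a single end $\mathcal{E}_n$, leaving the negative end $Y \times \{-\infty\}$ untouched; the point is that $\mathcal{E}_n$ remembers $\caR_n$.

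Next I would compute the end Floer homology of $\mathcal{E}_n$ when $Y$ is orientable. Exhaust $Y \times \bbR$ by $Y \times [-i,i]$ and $\caR_n$ by a compact exhaustion $C_1 \subset C_2 \subset \cdots$ with connected boundary $\Sigma_i := \partial C_i$, chosen compatibly with the rays; this gives an exhaustion of $\mathcal{S}_n$ whose boundary $3$-manifolds on the $\mathcal{E}_n$ side are the connected sums $Y \# \Sigma_i$, with the $i$-th cobordism restricting there to the boundary-connected-sum $(Y \times [i,i+1]) \,\natural\, W_i$, where $W_i \colon \Sigma_i \to \Sigma_{i+1}$ is the $i$-th cobordism in the exhaustion of $\caR_n$. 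Using $\HFhat(Y_1 \# Y_2) \cong \HFhat(Y_1) \otimes \HFhat(Y_2)$ over $\F$, the fact that a boundary-connected-sum cobordism induces the tensor product of the two cobordism maps, and that $Y \times [i,i+1]$ induces the identity, the system computing the end Floer homology of $\mathcal{E}_n$ is identified with $\HFhat(Y)$ tensored with the system computing $\HE(\caR_n)$. Hence the top nonzero Maslov grading of the end Floer homology of $\mathcal{E}_n$ is $d_{\mathrm{top}}(\HFhat(Y)) + g_n$, where $g_n$ is the corresponding grading of $\HE(\caR_n)$. By Theorem \ref{thm:exoticR4}\eqref{it:exoticR4-2}, the $\caR_n$ are distinguished precisely by this grading (computed from $\HFKhat_\red(T_{2,n} \# T_{2,-n})$ in Corollary \ref{cor:exoticR4}), so the numbers $g_n$, hence the $d_{\mathrm{top}}(\HFhat(Y)) + g_n$, are pairwise distinct; therefore the $\mathcal{S}_n$ have pairwise non-isomorphic end Floer homology and are pairwise nondiffeomorphic. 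Since Theorem \ref{thm:exoticR4}\eqref{it:exoticR4-2} already distinguishes the $\caR_n$ allowing for orientation reversals, the same holds for the $\mathcal{S}_n$.

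Two points remain. When $Y$ is non-orientable, $Y \times \bbR$ and the $Y \# \Sigma_i$ are non-orientable and $\HFhat$ is unavailable, so I would pass to the orientation double cover: since $\caR_n$ is orientable and $Y$ is not, $\widetilde{\mathcal{S}_n}$ is diffeomorphic to $(\widetilde{Y} \times \bbR) \natural \caR_n \natural \caR_n$, where $\widetilde{Y}$ is the orientation double cover of $Y$ and the two copies of $\caR_n$ come from the two lifts of the chosen ray (both running out the single end of $\widetilde{Y} \times \bbR$ lying over $Y \times \{+\infty\}$). The argument above applies to the merged end of $\widetilde{\mathcal{S}_n}$, whose end Floer homology has top Maslov grading $d_{\mathrm{top}}(\HFhat(\widetilde{Y})) + 2 g_n$, again pairwise distinct; and a diffeomorphism $\mathcal{S}_m \cong \mathcal{S}_n$ would lift to $\widetilde{\mathcal{S}_m} \cong \widetilde{\mathcal{S}_n}$, a contradiction. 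The main obstacle I anticipate is the first point: extending Gadgil's end Floer homology, and the end-sum behaviour used above, to ends such as $\mathcal{E}_n$, which for $Y \neq S^3$ are not simply connected at infinity --- one must check that the invariant remains well defined from an exhaustion with connected (but not simply connected) boundary, and that the passage to the limit is compatible with the connected-sum formula for $\HFhat$ and its cobordism maps. Granting this, the grading bookkeeping is routine.
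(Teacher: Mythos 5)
Your overall strategy coincides with the paper's: end-sum $Y\times\bbR$ with the $\caR_n$ of Corollary \ref{cor:exoticR4}, distinguish the results by the top grading of the end Floer homology of the merged end, and pass to the orientation double cover when $Y$ is non-orientable. However, the two points you defer are precisely where the work lies, and as written the argument has genuine gaps. First, $(Y\times\bbR)\natural\caR_n$ has two ends, and the exhaustion of the merged end is not grading admissible in the sense of Definition \ref{def:gradingadmissible} (the compact pieces contain a copy of $Y$, so $b_2, b_3$ need not vanish); one therefore has no well-defined absolutely graded invariant of that end without further input. The paper resolves this by introducing a relative notion of grading admissibility (cutting along $Y\times\{0\}$ and requiring $H_2(X_i,Y_0;\Q)=H_3(X_i,Y_0;\Q)=0$), and proving that this still forces $\sigma(W_{ij})=0$ and $\chi(W_{ij})=b_1(Y_i)-b_1(Y_j)$, so that the grading shifts can be normalized. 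You flag this as an anticipated obstacle but do not supply the argument; it is not routine, and the statement you want (a diffeomorphism invariant, absolutely graded, of each end separately) is exactly Theorem \ref{thm:relativegradingadmissible}.

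Second, the K\"unneth step is not correct as stated. End Floer homology is the direct limit of $\HFred$ (the reduced part of $\HF^+$), not of $\HFhat$, and $\HFred(Y\#\Sigma_i)$ is \emph{not} $\HFhat(Y)\otimes\HFred(\Sigma_i)$: the connected sum formula for $\HF^+$ involves $\HF^-\otimes_{\F[U]}\HF^-$ with a degree shift, and the interaction between the towers of $\HF^+(Y)$ and the reduced part coming from $\Sigma_i$ means neither that the top grading of the reduced homology is additive in the way you claim, nor that injectivity of the cobordism maps on the top graded summand is automatically preserved by ``tensoring with the identity of $\HFhat(Y)$.'' The paper instead re-runs the surgery exact triangle argument of Lemma \ref{lem:gradinginjection} for the connect-summed three-manifolds $Y\# S^3_0(\Wh^i(K_n))$, and this only identifies the top graded summand and establishes injectivity there when $n$ is sufficiently large relative to the gradings of $\HF^+(Y)$ in torsion spin$^c$ structures. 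This is why the conclusion is ``infinitely many'' smoothings rather than a clean formula valid for every $n$, and why the final grading is $n+f(Y)-1-b_1(Y)/2$ for a quantity $f(Y)$ depending on $\HF^+(Y)$ rather than your $d_{\mathrm{top}}(\HFhat(Y))+g_n$. A further small point: since a diffeomorphism may a priori exchange the two ends, one should compare the unordered pair of end invariants, as the paper does; this is easily patched but worth saying.
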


\begin{figure}[ht]
\centering
\labellist
	\pinlabel {$0$} at 81 55
	\pinlabel {$0$} at 81 115
	\pinlabel {$0$} at 164 67
	\pinlabel {$0$} at 241 67
	\pinlabel {$0$} at 319 67
	\pinlabel {$\hdots$} at 410 83
\endlabellist
\includegraphics[scale=0.9]{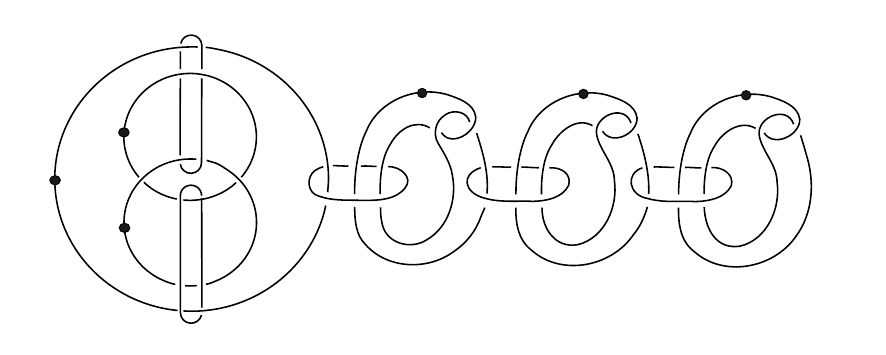}
\caption{A Kirby diagram for an exotic $\R^4$ obtained by attaching the Casson handle $\CH^+$ to a ribbon disk complement for $T_{2,3}\# -T_{2,3}$. }
\label{fig:example}
\end{figure}


\subsection{Background and context.}

Recall that \textit{ribbon $\mathbb{R}^4$'s} \cite{freedmandemichelis, bizacagompf} are a subfamily of exotic $\mathbb{R}^4$'s which embed in $\mathbb{R}^4_{\std}$ and admit explicit Kirby diagrams. These exotic $\mathbb{R}^4$'s were initially found inside non-product $h$-cobordisms between closed 4-manifolds, enveloping the non-product portion. Ribbon $\mathbb{R}^4$'s admit a simple description: choose a ribbon disk complement in $B^4$, attach a 0-framed Casson handle along the meridian of the deleted disk, and then delete the remaining boundary.  (For more detail on Casson handles, see Section~\ref{sec:background}.)   Any manifold constructed this way is homeomorphic to $\mathbb{R}^4$ by Freedman's theorem that Casson handles are homeomorphic to open 2-handles. This simple description has enabled the study of ribbon $\bbR^4$'s from various perspectives including diffeomorphism groups \cite{gompfmenagerie}, Riemannian geometry \cite{curvatureR4}, and Floer homology \cite{gadgil}. 
However, the central problem of understanding exactly how the choices of ribbon knot, disc, and Casson handle influence the diffeomorphism type of the ribbon $\mathbb{R}^4$ has seen limited progress since its introduction in the 90's \cite{bizacagompf, freedmandemichelis}. Similarly one can define \textit{slice $\R^4$'s} and ask the same questions; the current article is concerned with this potentially larger class. 

We summarize previous results below. Call a smooth manifold homeomorphic to $\mathbb{R}^4$ \textit{exotic} if it is not diffeomorphic to the standard $\mathbb{R}^4$. 


\begin{enumerate}
    \item 
    If we combine the unknot and its standard disc complement with any Casson handle, the result is the standard $\mathbb{R}^4$. This follows by an engulfing argument outlined in \cite{freedman}. Gompf pointed out that if we use the unknot and any slice disc, and any Casson handle, the result is standard. This is because of Casson's proof that Whitehead doubles of slice discs for the unknot are standard. 
    \item 
    Suppose $(W^5;X_1,X_2)$ is an $h$-cobordism between an exotic pair of closed 4-manifolds. De Michelis and Freedman \cite{freedmandemichelis} show that there exists a ribbon $\bbR^4$ $\cR$ and a topologically trivial proper $h$-cobordism $(U; \cR_0, \cR_1) \subset (W^5;X_1,X_2)$ where $W - U$ is a smooth product and $\cR_0\cong \cR_1\cong \cR$. In general this $\cR$ is made from a ribbon link complement. 
    In \cite{freedmandemichelis} it is shown that by varying the Casson handles and keeping the disc complement fixed, uncountably many distinct ribbon $\mathbb{R}^4$'s are obtained. This is shown by a cardinality argument, and in particular the explicit Casson handles giving different diffeomorphism types are not known. 
    \item 
    The ``simplest ribbon $\mathbb{R}^4$" of Bi{\v z}a{\v c}a-Gompf \cite{bizacagompf}, obtained using a disc for the $(-3,-3,3)$-pretzel knot and the simplest Casson handle $\CH^+$, is exotic: while it is not known to explicitly appear in an $h$-cobordism in the required way, it is similar enough to another exotic $\mathbb{R}^4$ with this property to allow a similar proof of exoticness.
    \item\label{it:BG-mirror} 
    The simplest ribbon $\mathbb{R}^4$ of Bi{\v z}a{\v c}a-Gompf is shown in \cite{bizacagompf} to not admit an orientation preserving diffeomorphism to its mirror image, which is a ribbon $\mathbb{R}^4$ using a standard disc for the $(3,3,-3)$-pretzel knot and the simplest Casson handle with all negative clasps. Prior to the present work, this was the only known result  distinguishing ribbon $\mathbb{R}^4$'s made from different ribbon disk complements. 
        \item  
        For any slice disk complement, there exists some Casson handle such that the resulting slice $\mathbb{R}^4$ is exotic. This result is due to Gadgil \cite{gadgil} using end Floer homology, a direct limit of Heegaard Floer homology; this also follows from Item (\ref{it:exoticR4-1}) of our Theorem \ref{thm:exoticR4}. It follows from Gadgil's proof that any Casson handle chosen from a suitable subfamily of Stein Casson handles \cite{gompfstipsicz} yields an exotic $\R^4$.  This result does not distinguish between any of these exotic $\R^4$'s.
\end{enumerate}

We remark that the pair in \eqref{it:BG-mirror} above is the only previously known example where slice $\mathbb{R}^4$'s made from nontrivial, distinct slice knots are distinguished; note they are mirror images hence are orientation-reversing diffeomorphic. 
 
Regarding these previous results, early proofs of exoticness were indirect and only applicable to certain ribbon $\mathbb{R}^4$'s that 
arise in special situations;
Gadgil's end Floer homology \cite{gadgil} is a relatively recent breakthrough, providing the first invariant able to detect when a general ribbon $\mathbb{R}^4$ is exotic. The key idea is that any exotic $\mathbb{R}^4$ admits a decomposition into a compact 4-manifold combined with an infinite stack of cobordisms. Loosely, the end Floer homology is the direct limit of Heegaard Floer modules under the cobordism maps coming from this infinite stack. Using the end Floer invariant $\HE(X)$ Gadgil showed that for any slice disk complement, there exists some Casson handle such that the resulting ribbon $\mathbb{R}^4$ has nonvanishing end Floer homology, and thus is not diffeomorphic to $\mathbb{R}^4$. 
Moreover, Gadgil proved this result without fully describing the group $\HE(X)$: it was sufficient to show this group is nontrivial. 

In this paper we refine Gadgil's notion of admissible cobordisms in order to endow $\HE(X)$ with a well-defined absolute $\mathbb{Q}$-grading. Using this and a careful analysis of 2-handle cobordism maps, we are able to use end Floer homology to prove \ref{thm:exoticR4} and Corollary \ref{cor:exoticR4}.

\begin{remark}
Using similar techniques as in the proof of Theorem \ref{thm:endsumproduct} we can distinguish between various end-sums of the $\caR_n$.  See Remark~\ref{rmk:endsumRn} below.  
\end{remark}

\begin{remark}\label{rem:taylorinvariant}
Any exotic smoothing of $Y\times \R$ constructed in Theorem \ref{thm:endsumproduct} will have the same value of the Taylor invariant \cite{Taylor} as the standard smoothing of $Y\times \R$. Recall the Taylor invariant of a noncompact 4-manifold $X$ is the smallest $n \ge 0$ such that $X$ embeds smoothly into some closed, spin 4-manifold with $b_2^+ = b_2^- = n$. Previously, Bi{\v z}aca and Etnyre \cite{bizacaetnyre} proved there are infinitely many smoothings of $Y\times \bbR$ for any compact 3-manifold $M$, but all of their smoothings have different, arbitrarly large values of the Taylor invariant. Fang and Gompf \cite{fang1, gompfstipsicz, gompfgenera} find infinitely many smoothings for certain choices of $Y$ all with the same Taylor invariant, but the case for general closed $Y$ was not previously known.
\end{remark}

\begin{remark}
We do not give the precise statement here, but roughly, if one has a 4-manifold with one end and the end Floer homology is non-trivial and supported in relative gradings bounded from above, then end-summing with the various $\caR_n$ will produce an infinite family of exotica.   
\end{remark}

\subsection{Overview of proof strategy} We conclude the introduction with a brief overview of the proof strategy. Gadgil's end Floer homology is defined as a direct limit of a directed system built from cobordisms. In the situation at hand, each of these cobordisms consists of a 1-handle and a 2-handle. The Heegaard Floer cobordism map associated to a 1-handle attachment is standard, and so it remains to understand the 2-handle cobordism. As the attaching circle for the 2-handle is nontrivial in homology, it is difficult to compute the cobordism map directly. Thus, we appeal to the surgery exact triangle, shown in Figure \ref{fig:triangle}, to determine the cobordism map. 

Two of the three 3-manifolds in the surgery exact triangle are straightforward: $S^3_0(K) \# (S^1 \times S^2)$ and $S^3_0(\Wh(K))$, where $K$ itself is in general some iterated Whitehead double of the initial slice knot. The third manifold in the exact triangle is shown in Figure \ref{fig:KJinY}. We compute the Heegaard Floer homology of this manifold using a combination of the surgery mapping cone formula of \cite{OS-integer}, the K\"unneth formula, and properties of $d$-invariants and L-space knots.

Once we know the Heegaard Floer homology of the three 3-manifolds in the surgery exact triangle, a grading and rank argument determines the cobordism map in question. The cobordism map calculation allows us to show that the end Floer homology is infinite dimensional in a particular grading; this grading depends on the knot Floer homology of the initial slice knot.

\subsection{Organization.}

In Section 2 we give relevant background on Casson handles, describe helpful exhaustions of noncompact 4-manifolds, and discuss the end-sum operation. In Section 3 we define the new notion of grading admissible exhaustions which we then use to give Gadgil's end Floer invariant an exhaustion-independent absolute grading. Section 4 reviews Heegaard Floer preliminaries to establish notation. In Section 5 we compute $\HF^+(S^3_0(\Wh(K))$ where $K$ is a slice knot and analyze the relevant cobordism map from $\HF^+(S^3_0(K)) \to \HF^+(S^3_0(\Wh(K)))$ using the surgery exact triangle.  Section 6 studies the knot Floer complex of Whitehead doubles. Finally, in Section 7 we prove Theorem \ref{thm:exoticR4}, Corollary \ref{cor:exoticR4}, Theorem \ref{thm:branching}, Theorem \ref{thm:endfloerzero}, and Theorem \ref{thm:endsumproduct}, and give remarks about generalizations.
	
Throughout, when we say the Whitehead double of a knot, we mean the positively clasped, untwisted Whitehead double. We work with Heegaard Floer homology with $\F=\Z/2\Z$ coefficients.

\subsection{Acknowledgements}
We are grateful for the 2025 Georgia International Topology Conference, where part of this work was done.  We also thank John Etnyre, Bob Gompf, Adam Levine, and JungHwan Park for helpful conversations.  TL thanks Georgia Tech for its hospitality during a visit in Summer 2025.  
\section{Noncompact 4-manifold background}\label{sec:background}

\subsection{Casson handles} We give a brief description of Casson handles and their handle decompositions. For more background, see any of \cite{cassonthree, BennettR4, gompfstipsicz, freedman}. First we recall some facts about self-plumbed 2-handles, which are the building blocks of Casson handles. Suppose $(k,\partial_-k)$ is a 2-handle which has been self-plumbed finitely many times away from the attaching region. Here $\partial_-k$ denotes the image of the 2-handle's attaching region under the plumbing quotient map; we also write $\partial_+ k := \overline{\partial k \setminus \partial_- k}$ for the upper boundary of $k$. Let $D$ denote the image of the 2-handle's core disk $D^2 \times 0 \subset D^2\times D^2$ under the plumbing operations, so $D \subset k$ is an immersed disk with finitely many double points; we call $C := \partial D \subset \partial_- k$ the attaching circle of $k$. The plumbed handle $(k,\partial_- k)$ is determined up to diffeomorphism by the numbers of positive and negative double points of $D$. The attaching circle $C$ is framed so that attaching $k$ along a framed knot has the same effect on the intersection form as attaching an ordinary 2-handle along the same framed knot. This amounts to twisting the product framing on $\partial D^2\times 0$ coming from $\partial D^2\times D^2$ by twice the signed number of self-plumbings. Note each double point of the immersed core $D$ has a double point loop on $D$. We may isotope the double point loops to reside in $\partial_+k$, disjointly from each other. These loops are framed so that attaching ordinary 2-handles to $k$ along them yields $B^4$, where $C$ maps to a 0-framed unknot in $S^3$. 

A \textit{1-stage Casson Tower} $(T_1, \partial_-T_1)$ is a self-plumbed 2-handle $(k,\partial_-k)$. An \textit{n-stage Casson Tower} $(T_n, \partial_-T_n)$ is obtained inductively by attaching self-plumbed 2-handles to all of the double point loops of a height $n-1$ Casson tower, using the framings we described. A \textit{Casson handle} is an infinite Casson tower minus its upper boundary, described as follows. Start with a 1-stage Casson tower $(T_1,\partial_-T_1)$ and attach self-plumbed 2-handles to obtain a 2-stage tower $T_2$, a 3-stage tower $T_3$, and so on until we have a nested sequence of towers $T_1\subset T_2\subset T_3\subset \hdots$. A \textit{Casson handle} $(\CH, \partial_-\CH)$ is the infinite union of any sequence of $T_i$ constructed this way, minus all boundary except the interior of $\partial_-T_1$, which we denote $\partial_-\CH$. To attach a Casson handle along a framed knot, use the first stage attaching circle $C\subset \partial_- \CH$ with the framing coming from $T_1$. 

Freedman's theorem \cite{freedman} shows that any Casson handle $(\CH, \partial_-\CH)$ is homeomorphic rel boundary to an open 2-handle ($D^2\times \bbR^2$, $S^1\times \bbR^2$). By Donaldson's theorem \cite{donaldson} and the failure of the $h$-cobordism theorem in dimension 5, it follows that not all Casson handles are diffeomorphic rel boundary to the open 2-handle. It is not known whether this is true for all Casson handles.

We remark that any finite number of self-plumbings with choice of signs is possible at any stage in a general Casson handle; any Casson handles is determined up to diffeomorphism by a singly-rooted tree with signed, finite-degree vertices, and no finite branches. We say a Casson handle contains an \textit{infinite positive (resp. negative) chain} if its describing tree contains an infinite linear chain, starting from the root vertex, with all positive (resp. negative) signs. Figure \ref{fig:cassonhandle} is a Kirby diagram for a 3-stage Casson tower attached to $B^4$ along a 0-framed unknot. Each stage has exactly one positive double point. In the rest of this article, let $\CH^+$ denote the Casson handle with a single positive double point in each stage, and $T_i^+$ denote its $i$-stage approximations. Figure \ref{fig:branchinghandle} shows the first three stages of the simplest Casson handle with exactly one infinite positive chain and one infinite negative chain, attached to $B^4$ along a 0-framed unknot. In the rest of this article, this Casson handle will be denoted $\CH^*$.


\begin{figure}[ht]
\centering
\labellist
	\pinlabel {$0$} at 53 65
	\pinlabel {$0$} at 167 65
	\pinlabel {$0$} at 267 65

\endlabellist
\includegraphics[scale=0.9]{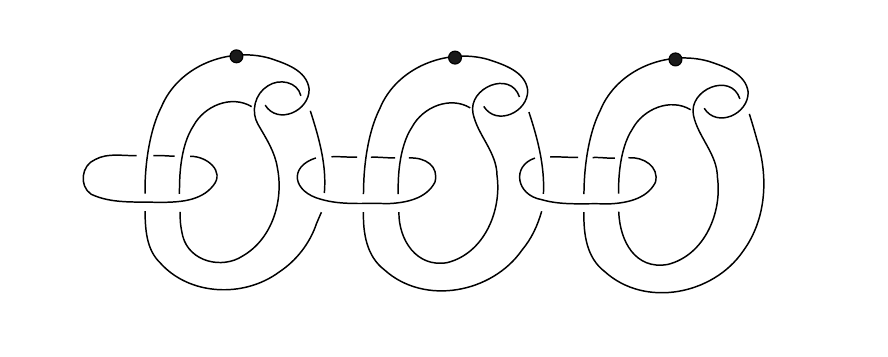}
\caption{A Kirby diagram for $B^4\cup T^+_3$. We see three copies of $T_1^+$: the 0-framed unknots are their attaching circles, and meridians of the dotted circles are their double point loops. Continuing the pattern infinitely to the right and deleting the boundary yields $\text{int}(B^4\cup \CH^+)$. }
\label{fig:cassonhandle}
\end{figure}

\begin{figure}[ht]
\centering
\labellist
	\pinlabel {$0$} at 104 46
	\pinlabel {$0$} at 178 33
	\pinlabel {$0$} at 242 33
	\pinlabel {$0$} at 180 112
	\pinlabel {$0$} at 244 112

\endlabellist
\includegraphics[scale=0.9]{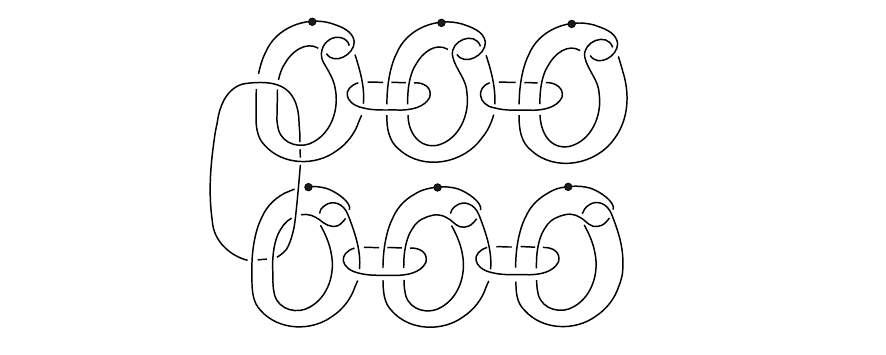}
\caption{A Kirby diagram for $B^4$ together with the first 3 stages of $CH^*$, attached along a 0-framed unknot in $S^3$.}
\label{fig:branchinghandle}
\end{figure}


\subsection{Exhaustions of slice $\mathbb{R}^4$'s}

Recall that slice $\mathbb{R}^4$'s are constructed by taking a slice disk complement in $B^4$, attaching a 0-framed Casson handle along the meridian of the deleted disk, and deleting the remaining boundary. Let $\mathcal{R}$ be a slice $\mathbb{R}^4$ constructed from a slice disk complement for $(D^2,K)\hookrightarrow (B^4, S^3)$ and the Casson handle $\CH^+$. We describe a useful compact exhaustion of $\mathcal{R}$. Let $X_0$ be the slice disk complement $B^4\setminus \nu(D)$. Let $X_1$ denote $X_0$ union a collar of $\partial X_0$, union a copy of $T_1^+$ attached along a 0-framed meridian of $D$. Inductively, define $X_i$ to be $X_{i-1}$ union a collar of $\partial X_{i-1}$, union a copy of $T_1^+$ attached along the previous tower's double point loop. By including the collars, we ensure that $\overline{X_i}\subset \mathrm{int} (X_{i+1})$ for each $i$. It is clear that $X_i$ is a compact exhaustion of $\mathcal{R}$. We state some properties of this exhaustion that foreshadow the application of Heegaard Floer homology.

\begin{lemma}\label{lem:sliceR4exhaustion}
Let the $X_i$ be as above. Then for all $i$:

\begin{enumerate}
\item \label{it:sliceR4exhaustion1} $X_i  \cong B^4\setminus \nu(\Wh^i(D))$ and $\partial X_i \cong S^3_0(\Wh^i(K))$.
\item \label{it:sliceR4exhaustion2} $X_{i+1}-  \mathrm{int}\, X_i$ is a cobordism from $S^3_0(\Wh^i(K))$ to $S^3_0(\Wh^{i+1}(K)))$ consisting of a 1- and 2-handle attached 
 as in Figure~\ref{fig:1-2-hcobordism}, where we replace $K$ in the figure with $\Wh^i(K)$. 
\end{enumerate}
\end{lemma}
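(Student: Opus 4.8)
The plan is to proceed by induction on $i$, with the base case $i=0$ and the inductive step following from a careful Kirby-calculus analysis of the cobordism $X_{i+1} - \mathrm{int}\, X_i$. For the base case: $X_0 = B^4 \setminus \nu(D)$ is exactly $B^4 \setminus \nu(\Wh^0(D))$ by convention ($\Wh^0 = \mathrm{id}$), and since $D$ is a properly embedded disk in $B^4$ with $\partial D = K \subset S^3$, the boundary $\partial X_0$ is obtained from $S^3$ by removing $\nu(K)$ and regluing; because $D$ is slice (null-homologous, so the meridian-longitude framing matches the $0$-framing from the Seifert surface), one gets $\partial X_0 \cong S^3_0(K) = S^3_0(\Wh^0(K))$. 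So the first statement holds for $i=0$, and the second statement is vacuous at the bottom level — the content begins with the step from $X_0$ to $X_1$.

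For the inductive step, suppose $X_i \cong B^4 \setminus \nu(\Wh^i(D))$ with $\partial X_i \cong S^3_0(\Wh^i(K))$. By construction $X_{i+1}$ is $X_i$ (plus a collar) together with a copy of $T_1^+$ — a once-positively-self-plumbed $2$-handle — attached along the appropriate $0$-framed curve $\gamma$ in $\partial X_i$ (the meridian of the deleted disk when $i=0$, or the double-point loop of the previous tower when $i>0$). The first thing I would do is recall, from the description of self-plumbed $2$-handles in Section~\ref{sec:background}, that attaching $T_1^+$ along a framed knot $\gamma$ has the same handle-theoretic description as: attach an ordinary $2$-handle along $\gamma$ with framing shifted by $+2$ (the twist from the single positive self-plumbing), then attach a further $2$-handle along the double-point loop; equivalently — and this is the key translation — attaching $T_1^+$ along a $0$-framed $\gamma$ is the same as attaching a $1$-handle and a $2$-handle in the pattern of Figure~\ref{fig:1-2-hcobordism}, where the new $2$-handle's attaching circle is the image of $\gamma$ pushed into the upper boundary and clasped with the belt circle of the $1$-handle. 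This is exactly the standard Kirby picture (cf. Bi\v{z}a\v{c}a--Gompf) for a single Casson tower stage, and it establishes statement \eqref{it:sliceR4exhaustion2}: $X_{i+1} - \mathrm{int}\, X_i$ consists of one $1$-handle and one $2$-handle attached in that pattern. For statement \eqref{it:sliceR4exhaustion1}, the observation is that this $1$-handle/$2$-handle pattern attached to $B^4 \setminus \nu(D')$ along a $0$-framed meridian of a slice disk $D'$ is precisely the surgery description of replacing $D'$ by its (positively-clasped, untwisted) Whitehead double $\Wh(D')$ — the $1$-handle/$2$-handle pair carves a Whitehead-doubled tube out of the complement. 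Hence $X_{i+1} \cong B^4 \setminus \nu(\Wh(\Wh^i(D))) = B^4 \setminus \nu(\Wh^{i+1}(D))$, and taking boundaries, $\partial X_{i+1} \cong S^3_0(\Wh^{i+1}(K))$, since $\Wh^{i+1}(D)$ is again a slice disk (so the same null-homology/framing argument as in the base case applies).

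The main obstacle, and the step deserving the most care, is the Kirby-calculus identification: verifying that attaching the self-plumbed handle $T_1^+$ along the specified $0$-framed curve really does produce, on one hand, the stated $1$-handle--$2$-handle cobordism of Figure~\ref{fig:1-2-hcobordism}, and on the other hand, the Whitehead-double disk complement. Concretely I would draw the Kirby diagram of $T_1^+$ attached to a $0$-framed unknot (this is one stage of Figure~\ref{fig:cassonhandle}), reinterpret the dotted circle of that diagram as the $1$-handle and the $0$-framed attaching circle together with the meridian as the $2$-handles, and check that after canceling one handle the remaining $1$-/$2$-handle pair has its $2$-handle attaching circle running through the $1$-handle in the clasp pattern that defines $\Wh$; simultaneously one must track the framing (the $+2$ twist from the plumbing is absorbed correctly because the Whitehead double is untwisted and $0$-framed). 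Everything else — the base case, the iteration, the bookkeeping of collars ensuring $\overline{X_i} \subset \mathrm{int}\, X_{i+1}$, and the claim that the $X_i$ exhaust $\mathcal{R}$ — is routine once this identification is in hand.
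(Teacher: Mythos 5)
Your overall strategy is sound but organized differently from the paper: you induct one stage at a time, whereas the paper performs a single global Kirby-calculus collapse of the entire $i$-stage tower down to one kinky handle whose dotted circle has been Whitehead doubled $i-1$ times, and then reinterprets that dotted circle as a carved-out disk. The single-stage move you invoke --- that attaching the $1$-handle/$2$-handle pattern of Figure~\ref{fig:1-2-hcobordism} to $B^4\setminus\nu(D')$ along a $0$-framed \emph{meridian of the deleted disk} re-carves the complement of $\Wh(D')$, with the $2$-handle filling $D'$ back in --- is exactly the reinterpretation step the paper uses, so that part of your argument is correct.

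The genuine gap is in how your induction closes. You correctly note that for $i>0$ the new copy of $T_1^+$ is attached along the \emph{double point loop of the previous stage}, not along a meridian of the deleted disk; but you then apply your single-stage move as if the attaching curve were ``a $0$-framed meridian of a slice disk $D'$.'' These are different curves a priori, and the inductive hypothesis ``$X_i\cong B^4\setminus\nu(\Wh^i(D))$'' carries no information about where the top-stage double point loop sits under that diffeomorphism. This identification --- that the double point loop is isotopic in $\partial X_i$ to a meridian of $\Wh^i(K)$ --- is precisely what the paper extracts from its collapsing step, and it is also the substance of item \eqref{it:sliceR4exhaustion2}, which your write-up likewise needs but asserts only via the generic kinky-handle picture. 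To repair the induction you must strengthen the inductive hypothesis to: $X_i\cong B^4\setminus\nu(\Wh^i(D))$ via a diffeomorphism carrying the $i$th-stage double point loop to a $0$-framed meridian of the deleted disk, and then check that the single-stage move preserves this (it does, since in the standard diagram the new double point loop is a meridian of the new dotted circle, i.e., of the newly carved disk). Without this, the step from $X_i$ to $X_{i+1}$ does not follow from what you have proved. The rest --- base case, framing bookkeeping, collars --- is fine.
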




\begin{figure}[ht]
\centering
\labellist
	\pinlabel {$K$} at 49 80
	\pinlabel {$\langle 0 \rangle$} at 110 130

	\pinlabel {$K$} at 213 80
	\pinlabel {$\langle 0 \rangle$} at 282 130

	\pinlabel {$0$} at 267 80

\endlabellist
\includegraphics[scale=0.9]{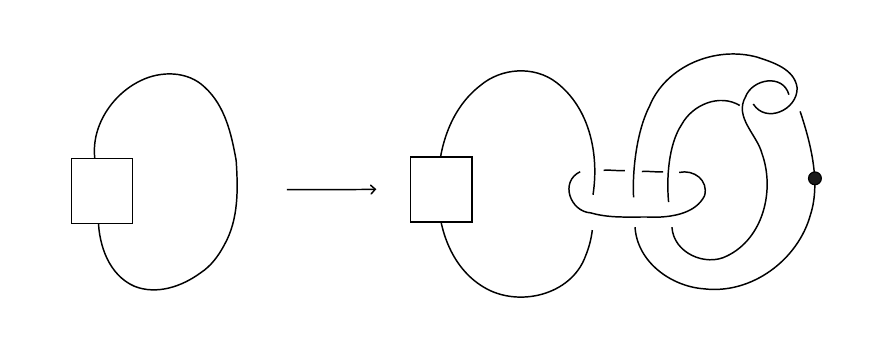}
\caption{A cobordism from $S^3_0(K)$ to $S^3_0(\Wh(K))$ consisting of a $1$- and $2$-handle attachment.}
\label{fig:1-2-hcobordism}
\end{figure}

\begin{proof} (\ref{it:sliceR4exhaustion1}):
By Kirby calculus, we can simplify the Casson tower by handle slides and cancellations,
obtaining a description of $X_i$ as the slice disk complement $X_0 = B^4\setminus \nu(D)$ with a copy of $T_1^+$ attached along a 0-framed meridian of $D$, where the dotted circle of $T_1^+$ has been positively Whitehead doubled $i-1$ times.  Since this dotted circle represents the $i$th Whitehead double of a disk $D' \subset X_0$, we have that $X_i$ is the union of $X_0$ and a 2-handle $h$ attached along a 0-framed meridian of $D$, minus the disk $\Wh^i(D')$. Using a parallel copy of $D$ inside $\partial X_0$, we may isotope $D'$ into a parallel copy of $D$ inside a collar of $\partial X_0$. By considering neighborhoods, we may assume this isotopy turns $\Wh^i(D')$ into a copy of $\Wh^i(D)$; deleting $\Wh^i(D)$ and using the 2-handle $h$ to fill the deleted disk $D$ gives the result. The assertion about boundaries follows from the fact that the boundary of any slice disk complement for a knot $K$ is 0-surgery on $K$.

(\ref{it:sliceR4exhaustion2}): By the collapsing step of part (\ref{it:sliceR4exhaustion1}) above, the meridian $\mu$ of the $i$th stage double point loop in $X_i$ is isotopic to a meridian of the deleted disk $\Wh^i(D)$. After isotoping towards the boundary, we see that $\mu$ is isotopic to a meridian of the knot $\Wh^i(K)$ being surgered. By the definition of $X_{i+1}$, the desired cobordism is obtained exactly as in Figure~\ref{fig:1-2-hcobordism} where we replace $K$ in the figure with $\Wh^i(K)$.
\end{proof}

\subsection{End-sums and more exotica}

In the theory of noncompact 4-manifolds a central open question is whether every noncompact 4-manifold admits uncountably many exotic smooth structures. One way of generating candidates for exotic smoothings is to combine a given 4-manifold with an exotic $\bbR^4$ ``at infinity'' by \textit{end-summing}. The \textit{end-sum operation} \cite{gompfinfinite}, or connected sum at infinity, is an operation for combining two noncompact manifolds in a manner similar to the boundary connected-sum of manifolds with boundary. 

\begin{definition}
Let $X, X'$ be noncompact smooth 4-manifolds. The \textit{end-sum} $X\natural X'$ is defined as follows: let $\gamma \colon [0,\infty)\to X$ and $\gamma' : [0,\infty)\to X'$ be smooth, properly embedded rays. Let $\nu(\gamma) \cong \nu(\gamma') \cong [0,\infty)\times B^3$ be regular neighborhoods. Define
\[X\natural X' = X \cup_\phi (I\times \mathbb{R}^3) \cup_\phi' X'\]
where $\phi \colon [0,1/2)\times \bbR^3 \to \nu(\gamma)$ and $\phi' \colon (1/2,1]\times \bbR^3 \to \nu(\gamma')$ are orientation-preserving diffeomorphisms that respect the $\bbR^3$-bundle structures.
\end{definition}

End-summing exotic $\bbR^4$'s onto a noncompact smooth 4-manifold $X$ does not change the homeomorphism type, but it often changes the diffeomorphism type \cite{gompfmenagerie, gompfinfinite, fang1}. In \cite{gompfinfinite} it is shown that for 4-manifolds that are simply connected at infinity, the end-sum does not depend on the choice of rays. In particular, when end-summing exotic $\bbR^4$'s together, one does not need to specify which rays are used. For more general 4-manifolds the choice of rays does matter \cite{endsumnonunique} but this is not relevant to our applications. In particular, end-summing an exotic $\bbR^4$ onto a noncompact 4-manifold $X$ along a specific arc in $X$ still potentially changes the given smoothing of $X$. Since it is known that there are uncountably many distinct exotic $\bbR^4$'s, the challenge is distinguishing between the end-sums $X\natural \caR$ for various exotic $\bbR^4$'s $\caR$. Typically, indirect methods have been used \cite{fang1, gompfmenagerie}; we will distinguish some examples of these smoothings using Gadgil's end Floer homology in the case of $Y\times \bbR$ for closed 3-manifolds $Y$.\\

Let $Y$ be a closed 3-manifold and $\caR$ be a slice $\bbR^4$ constructed as in the previous subsection. Consider $(Y\times \bbR)\natural \caR$ where the end-sum is performed along a standard ray $\{p\}\times [1,\infty)\subset (Y\times\bbR)$ and any ray in $\caR$. The manifold $(Y\times [0,\infty))\natural \caR$ is naturally a submanifold of $(Y\times \bbR)\natural \caR$ and is a neighborhood of the non-standard end. The following lemma, analogous to Lemma \ref{lem:sliceR4exhaustion}, describes a useful compact exhaustion for this end.

\begin{lemma}\label{lem:YxRexhaustion}
The manifold $(Y\times [0,\infty))\natural \caR$ admits a compact exhaustion $X_i$, $i\ge 0$, such that:

\begin{enumerate}
\item  $\partial X_i = -(Y\times\{0\}) \sqcup (Y\# S^3_0(\Wh^i(K)))$ for each $i$.
\item $X_i \subset \mathrm{int} (X_{i+1})$ for each $i$.
\item $\overline{X_{i+1} - X_i}$
 is a cobordism from $Y\#S^3_0(\Wh^i(K))$ to $Y\#S^3_0(\Wh^{i+1}(K)))$ consisting of a 1- and 2-handle attached as in Figure~\ref{fig:1-2-hcobordism} where we replace $K$ in the figure with $\Wh^i(K)$. 
\end{enumerate}
\end{lemma}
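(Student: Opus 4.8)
The plan is to run the construction of Lemma~\ref{lem:sliceR4exhaustion} in parallel with a product $Y\times I$ along the standard end. First I would take the compact exhaustion $\{Z_i\}_{i\ge 0}$ of $\caR$ from Lemma~\ref{lem:sliceR4exhaustion}, so that $Z_i\cong B^4\setminus\nu(\Wh^i(D))$, $\partial Z_i\cong S^3_0(\Wh^i(K))$, and $\overline{Z_{i+1}\setminus Z_i}$ is the $1$- and $2$-handle cobordism from $S^3_0(\Wh^i(K))$ to $S^3_0(\Wh^{i+1}(K))$ of Figure~\ref{fig:1-2-hcobordism} (with $K$ replaced by $\Wh^i(K)$). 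Since $\caR$ is connected and the $Z_i$ are nested with $\overline{Z_i}\subset\mathrm{int}\,Z_{i+1}$, I may choose the properly embedded ray $\gamma'\colon[0,\infty)\to\caR$ defining the end-sum so that it meets each $\partial Z_i$ transversely in a single point and $\gamma'\cap Z_i$ is a single arc for every $i$; on the product side I use the standard ray $\gamma=\{p\}\times[1,\infty)\subset Y\times[0,\infty)$.

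Next I would set $X_i$ to be the union of $Y\times[0,i+1]$, of $Z_i$, and of the part of the end-sum tube lying over $[1,i+1]\subset\gamma$ and over $\gamma'\cap Z_i$; equivalently, $X_i$ is the boundary connected sum of $Y\times[0,i+1]$ with $Z_i$ along $3$-balls in $Y\times\{i+1\}$ and in $\partial Z_i$. Adjoining collars of the moving boundary components exactly as in Lemma~\ref{lem:sliceR4exhaustion} arranges the nesting $X_i\subset\mathrm{int}\,X_{i+1}$ (rel the fixed boundary $Y\times\{0\}$), which is~(2), and $\bigcup_i X_i=(Y\times[0,\infty))\natural\caR$ by the previous paragraph. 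For~(1), since $\partial(Y\times[0,i+1])=-(Y\times\{0\})\sqcup(Y\times\{i+1\})$ and $\partial Z_i\cong S^3_0(\Wh^i(K))$, forming the boundary connected sum yields $\partial X_i\cong-(Y\times\{0\})\sqcup\big(Y\#S^3_0(\Wh^i(K))\big)$, with the usual orientations when $Y$ is oriented.

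For~(3) I would observe that $\overline{X_{i+1}\setminus X_i}$ is the union of the product cobordism $Y\times[i+1,i+2]$, the $1$- and $2$-handle cobordism $W_i:=\overline{Z_{i+1}\setminus Z_i}$, and the intervening piece of the end-sum tube, where the joining arcs $\{p\}\times[i+1,i+2]$ and $\gamma'\cap W_i$ each run from the incoming boundary to the outgoing boundary of the respective cobordism. Thus $\overline{X_{i+1}\setminus X_i}$ is obtained from $W_i$ by internal connected sum with the product $Y\times I$ along a through-arc; as $Y\times I$ carries no handles and the connected sum occurs in the complement of the attaching regions of the handles of $W_i$, the result is the cobordism from $Y\#S^3_0(\Wh^i(K))$ to $Y\#S^3_0(\Wh^{i+1}(K))$ built by attaching that same $1$-handle and $2$-handle, i.e.\ Figure~\ref{fig:1-2-hcobordism} with $K$ replaced by $\Wh^i(K)$.

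The step I expect to need the most care is this last identification: making rigorous that boundary-connect-summing a handle cobordism with a product $Y\times I$ preserves its handle structure and genuinely produces connected sums at top and bottom (rather than some fiber-summed or twisted version). This is ensured by the choice of $\gamma'$ meeting each $W_i$ in a standard unknotted through-arc disjoint from the cores, cocores, and attaching spheres of the $1$- and $2$-handles, which a dimension count makes possible after a small isotopy; the remaining verifications are routine bookkeeping paralleling Lemma~\ref{lem:sliceR4exhaustion}.
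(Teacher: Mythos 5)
Your proposal is correct and follows essentially the same route as the paper: both take the exhaustion of $\caR$ from Lemma~\ref{lem:sliceR4exhaustion}, arrange the end-summing ray to meet each $\partial Z_i$ transversely in one point, and identify $X_i$ as the boundary connected sum of $Y\times[0,i+1]$ with $Z_i$ along the tube, so that $\overline{X_{i+1}-X_i}$ is the product cobordism $Y\times I$ joined to the $1$- and $2$-handle cobordism along a through-arc. The paper merely packages the same decomposition as an explicit union of pieces $K_0\cup\cdots\cup K_i$ and defers the verification of the three properties to the proof of Lemma~\ref{lem:sliceR4exhaustion}.
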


\begin{proof}
Note that $(Y\times[0,\infty))\setminus \nu(\{p\}\times [1,\infty)) \cong Y\times[0,1]\cup (Y\setminus B^3)\times[1,\infty)$. The second piece decomposes further as the union of $(Y\setminus B^3)\times[i,i+1]$ for $i \ge 1$. Give $\caR$ the decomposition $X_i$ described in Lemma \ref{lem:sliceR4exhaustion}. Let $\gamma\subset \caR$ be the end-summing ray: since any two proper rays in $\caR$ are isotopic, we may assume $\gamma(0)\in \mathrm{int}(X_0)$, $\gamma(i+1)\in \partial X_i$ for all $i$, $\gamma$ intersects each $\partial X_i$ transversely in a single point, and $\gamma$ is disjoint from the Casson handle used in the construction of $\caR$. By appropriately parametrizing the arc in $Y\times \bbR$ we may assume the end sum $(Y\times [0,\infty))\natural \caR$ equals the union of pieces
\begin{align*}
&K_0:= \left(Y\times[0,1]\cup (Y\setminus B^3)\times[1,2]\right) \cup_{\phi_1} \left(X_0 \setminus \nu(\gamma[0,1])\right)\\
\cup &K_1:= \left((Y\setminus B^3)\times[2,3]\right) \cup_{\phi_2} \left(W_{12} \setminus \nu(\gamma[1,2])\right)\\
\cup & K_2:= \left((Y\setminus B^3)\times[3,4]\right) \cup_{\phi_3} \left(W_{23} \setminus \nu(\gamma[2,3])\right) \cup \hdots
\end{align*}
where $\phi_1$ is a diffeomorphism of $B^3$, $\phi_i$ for $i\ge 1$ are diffeomorphisms of $S^2\times I$, and the $\phi_i$ glue to form a diffeomorphism of $\bbR^3$; the $K_i$ are attached by natural identifications compatible with the $\phi_i$. Letting $X_i = K_0\cup...\cup K_i$ yields the desired exhaustion, the properties follow from the proof of Lemma \ref{lem:sliceR4exhaustion}. 
\end{proof}

\section{End Floer homology}

We briefly recall Gadgil's \cite{gadgil} end Floer homology $\HE(X, \s)$, and give sufficient conditions for when we can define an absolute grading on $\HE(X, \s)$.

Let $X$ be an open 4-manifold with one end, and let 
\[ X_1 \subset X_2 \subset \dots \]
be an exhaustion of $X$ by compact manifolds $X_i$ with $X_i \subset \operatorname{int} X_{i+1}$ and connected boundary. Let $Y_i = \d X_i$, and for $i < j$, let 
\[ W_{ij} = X_j - \operatorname{int} X_i. \]

A cobordism $W_{ij}$ from $Y_i$ to $Y_j$ is \emph{admissible} if the map induced by inclusion 
\[ H^1(W_{ij}) \to H^1(Y_j) \]
is surjective. Admissibility guarantees that $\spinc$ structures $\s_{ij}$ on $W_{ij}$ and $\s_{jk}$ on $W_{jk}$ uniquely determine a $\spinc$ structure on $W_{ik} = W_{ij} \cup W_{jk}$ that restricts to the given ones; see \cite[Section 1]{gadgil} for more details. An \emph{admissible exhaustion} $\{X_i\}$ of a 4-manifold $X$ is an exhaustion of $X$ where each associated $W_{ij}$ is admissible. Throughout, we assume that all of our exhaustions are admissible.

We will be interested in $\spinc$ structures on the ends of 4-manifolds, defined as follows. An \emph{asymptotic spin$^c$ structure} $\s$ on $X$ is a $\spinc$ structure on $X -X'$, where $X'$ is a compact subset of $X$, and two asymptotic $\spinc$ structures $\s'$ on $X - X'$ and $\s''$ on $X-X''$ are equivalent if there exists a compact set $X''' \supset X', X''$ such that $\s' |_{X-X'''} = \s'' |_{X-X'''}$.

With the above set up, Gagdil proves the following:

\begin{theorem}[{\cite[Theorem 1.4]{gadgil}}]
Given an asymptotic spin$^c$ structure $\s$ on $X$, there is an invariant
\[ \HE(X, \s) = \varinjlim \HFred(Y_i, \s|_{Y_i}) \]
which is the direct limit of $\HFred(Y_i, \s|_{Y_i})$ under the morphisms induced by the cobordisms $W_{ij}$. Furthermore, the invariant $\HE(X, \s)$ is independent of the choice of admissibile exhaustion.
\end{theorem}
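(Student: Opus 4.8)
The plan is to reduce the statement to two separate facts: first, that $\HE(X,\s)$ as defined via a single admissible exhaustion is a well-defined direct limit; and second, that any two admissible exhaustions of $X$ are ``interleaved'' in a way compatible with the directed systems, so that they compute the same limit. For the first point, given an admissible exhaustion $\{X_i\}$, I would note that the maps $F_{W_{ij},\s_{ij}}\colon \HFred(Y_i,\s|_{Y_i})\to\HFred(Y_j,\s|_{Y_j})$ are well-defined once we fix, for each pair $i<j$, a $\spinc$ structure on $W_{ij}$ restricting to the given asymptotic $\s$. Admissibility is exactly what makes this choice canonical: since $H^1(W_{ij})\to H^1(Y_j)$ is surjective, a $\spinc$ structure on $W_{ij}$ is determined by its restriction to $Y_i$ together with any extension, and the composition law for cobordism maps (under gluing of $\spinc$ structures, which the surjectivity hypothesis guarantees is unambiguous) shows $F_{W_{ik}}=F_{W_{jk}}\circ F_{W_{ij}}$ after summing over the finitely many relevant $\spinc$ structures. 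Hence $\{\HFred(Y_i,\s|_{Y_i}), F_{W_{ij}}\}$ is a genuine directed system and its colimit exists.

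For the second point — independence of the exhaustion — I would use a standard cofinality argument. Given two admissible exhaustions $\{X_i\}$ and $\{X'_j\}$ of $X$, compactness lets me pass to subsequences and interleave them: choose indices so that
\[
X_{i_1}\subset X'_{j_1}\subset X_{i_2}\subset X'_{j_2}\subset\cdots,
\]
each inclusion strict on the level of interiors. The key subtlety is that the ``mixed'' cobordisms $X'_{j_k}-\operatorname{int}X_{i_k}$ and $X_{i_{k+1}}-\operatorname{int}X'_{j_k}$ need to be admissible for the interleaved sequence to be an admissible exhaustion in its own right; I would check that admissibility of the mixed cobordisms follows from admissibility of the two original exhaustions together with the Mayer–Vietoris sequence (surjectivity on $H^1$ onto the outgoing boundary is inherited). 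Once the interleaved sequence is known to be an admissible exhaustion, functoriality of the cobordism maps shows its colimit receives compatible maps from — and is isomorphic to — the colimits of both $\{X_i\}$ and $\{X'_j\}$, because a cofinal subsystem of a directed system has the same colimit. Chaining these isomorphisms gives $\HE(X,\s)$ computed from $\{X_i\}$ $\cong$ $\HE(X,\s)$ computed from $\{X'_j\}$.

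I would also remark that the asymptotic $\spinc$ structure $\s$ restricts consistently along the tower: for $i$ large enough that $X'\subset\operatorname{int}X_i$ (where $\s$ is defined on $X-X'$), the restriction $\s|_{Y_i}$ is well-defined, and changing the compact set $X'$ within its equivalence class does not change these restrictions for large $i$, so the truncation of the directed system to large $i$ — which does not affect the colimit — is canonically associated to $\s$.

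The main obstacle I anticipate is the bookkeeping around $\spinc$ structures in the interleaving step: one must verify that the mixed cobordisms are admissible and that the canonical gluing of $\spinc$ structures is associative across the interleaved tower, so that the cobordism maps from the $\{X_i\}$-system, the $\{X'_j\}$-system, and the interleaved system all agree on the nose rather than merely up to the ambiguity of choosing $\spinc$ extensions. This is precisely the technical content that admissibility was introduced to handle, and Gadgil's \cite[Section 1]{gadgil} treatment of it can be cited, but making the cofinality argument airtight requires tracking these choices carefully. Everything else is the formal algebra of directed colimits plus the composition law for Heegaard Floer cobordism maps, both of which are standard.
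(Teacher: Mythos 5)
Your proposal is correct and follows essentially the same route as the paper, which cites Gadgil for this statement and recounts the identical interleaving/cofinality argument (passing to subsequences so that $X_1 \subset X'_1 \subset X_2 \subset X'_2 \subset \cdots$, with admissibility of the mixed cobordisms supplied by Gadgil's Lemma 2.5) when proving the graded refinement in Theorem \ref{thm:absgr}. The only point you omit is that Gadgil must choose signs to get a directed system, which is moot here since the paper works over $\F = \Z/2\Z$.
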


In certain special cases, we will be interested in endowing $\HE(X, \s)$ with an absolute $\Q$-grading. Note that Gadgil's invariant does not come with a grading, since the cobordism maps induced by $W_{ij}$ in general have a grading shift.

We will require that our exhaustions satisfy the following additional condition:

\begin{definition}\label{def:gradingadmissible}
An admissible exhaustion $\{X_i\}$ on $X$ is \emph{grading admissible} 
if for all $i$, we have $b_2(X_i) = b_3(X_i)  = 0$.  
\end{definition}

We will need to normalize the grading on $\HF^+(Y)$ with respect to $b_1(Y)/2$. Let $\HF^+(Y_i, \s|_{Y_i})[n]$ denote $\HF^+(Y_i, \s|_{Y_i})$ with its grading shifted up by $n$, that is, $\HF^+_k(Y_i, \s|_{Y_i})[n] = \HF^+_{k-n}(Y_i, \s|_{Y_i})$. 

\begin{theorem}\label{thm:absgr}
Suppose that $X$ admits a grading admissible exhaustion and $\s$ is an asymptotic spin$^c$ structure on $X$.  Then the direct limit
\[ \HE(X, \s) = \varinjlim \HFred(Y_i, \s|_{Y_i})[-b_1(Y_i)/2] \]
has an absolute grading that is independent of the choice of grading admissible exhaustion.
\end{theorem}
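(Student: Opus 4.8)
The plan is to show that, once each term of Gadgil's directed system is normalized by $-b_1(Y_i)/2$, all of its structure maps become grading-preserving, so that the direct limit inherits an absolute $\Q$-grading, and then to deduce independence of the exhaustion from Gadgil's corresponding ungraded statement. The first step is purely homological. Since each $X_i$ is a compact connected oriented $4$-manifold with connected boundary, $\chi(X_i)=1-b_1(X_i)$ and $\sigma(X_i)=0$, and the long exact sequence of $(X_i,Y_i)$ over $\Q$ together with Lefschetz duality ($H^1(X_i,Y_i;\Q)\cong H_3(X_i;\Q)=0$, $H^2(X_i,Y_i;\Q)\cong H_2(X_i;\Q)=0$) gives $H^1(X_i;\Q)\cong H^1(Y_i;\Q)$, hence $b_1(X_i)=b_1(Y_i)$. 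Thus for $i<j$, using $\chi(Y_i)=0$ and Novikov additivity, $\chi(W_{ij})=b_1(Y_i)-b_1(Y_j)$ and $\sigma(W_{ij})=0$. Applying Mayer--Vietoris to $X_j=X_i\cup_{Y_i}W_{ij}$ over $\Q$ and using $H_3(X_j;\Q)=H_2(X_j;\Q)=H_2(X_i;\Q)=0$, the inclusion $Y_i\hookrightarrow W_{ij}$ induces a surjection $H_2(Y_i;\Q)\twoheadrightarrow H_2(W_{ij};\Q)$, so every rational second homology class of $W_{ij}$ is carried by the boundary and the intersection form of $W_{ij}$ vanishes identically; in particular $b_2^+(W_{ij})=b_2^-(W_{ij})=0$. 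Finally, since $b_2(X_i)=0$ forces $H^2(X_i;\Z)$ to be finite, $c_1(\s|_{Y_i})$ is torsion (as it must be for $\HFred(Y_i,\s|_{Y_i})$ to carry an absolute $\Q$-grading at all), and with the vanishing intersection form this gives $c_1(\s|_{W_{ij}})^2=0$.

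With these inputs, the Ozsv\'ath--Szab\'o degree-shift formula shows that $F^+_{W_{ij},\,\s|_{W_{ij}}}\colon\HFp(Y_i,\s|_{Y_i})\to\HFp(Y_j,\s|_{Y_j})$ shifts the absolute grading by $\tfrac14\bigl(c_1(\s|_{W_{ij}})^2-2\chi(W_{ij})-3\sigma(W_{ij})\bigr)=\tfrac12\bigl(b_1(Y_j)-b_1(Y_i)\bigr)$; since $b_2^+(W_{ij})=0$ there is no ambiguity in defining this map or in restricting it to the reduced groups. Hence the induced map $\HFred(Y_i,\s|_{Y_i})[-b_1(Y_i)/2]\to\HFred(Y_j,\s|_{Y_j})[-b_1(Y_j)/2]$ is grading-preserving, and a direct limit of absolutely graded $\F$-vector spaces along grading-preserving maps is again absolutely graded, so $\HE(X,\s)$ inherits an absolute $\Q$-grading.

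For independence, I would take two grading admissible exhaustions $\{X_i\}$ and $\{X'_j\}$: Gadgil's identification of the two direct limits is realized by the cobordism maps of $\overline{X'_j-X_i}$ (and $\overline{X_i-X'_j}$) for nested pairs, and since both $X_i$ and $X'_j$ are grading admissible, the computation above applies verbatim to these cobordisms (it used only $b_2=b_3=0$ at the two ends), so the identification preserves the grading after the same normalization. I expect the main obstacle to be establishing $c_1(\s|_{W_{ij}})^2=0$: one must notice that all of $H_2(W_{ij};\Q)$ is carried by the boundary, so the intersection form is degenerate, and separately that $\s$ restricts to torsion $\spinc$ structures on the $Y_i$ --- this is precisely where the hypotheses $b_2(X_i)=b_3(X_i)=0$ enter. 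A secondary technical point is checking that the degree-shift formula and the functoriality of $\HFred$ under cobordism maps are unproblematic when $b_1>0$, which is guaranteed exactly by $b_2^+(W_{ij})=0$.
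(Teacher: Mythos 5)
Your proposal is correct and follows essentially the same route as the paper: establish $b_1(X_i)=b_1(Y_i)$ via the long exact sequence of $(X_i,Y_i)$ and duality, use the Ozsv\'ath--Szab\'o degree-shift formula to show the normalized cobordism maps preserve the shifted grading, and deduce exhaustion-independence by interleaving two exhaustions as in Gadgil. You in fact supply slightly more detail than the paper does on why $\sigma(W_{ij})=0$ and $c_1(\s|_{W_{ij}})^2=0$ (Novikov additivity and the Mayer--Vietoris surjection $H_2(Y_i;\Q)\twoheadrightarrow H_2(W_{ij};\Q)$), which the paper folds into a single citation of $b_2(X_i)=b_3(X_i)=0$.
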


\begin{remark}
In \cite[Lemma 3.2]{gadgil}, Gadgil needed to choose signs to obtain a direct system; this is not an issue in our case since we work over $\mathbb{F}$.
\end{remark}

\begin{remark}
Since direct limits are unchanged under passing to subsequences, and since any neighborhood of the end of $X$ must contain a subsequence of the $Y_i$, it follows that the absolutely graded $\HE(X,\mathfrak{s})$ is a diffeomorphism invariant of the end of $X$.
\end{remark}

Morally, the idea is that the conditions $b_2(X_i) = b_3(X_i)  = 0$ will imply that the grading shift of a cobordism $W_{ij}$ is governed by the difference between $b_1(Y_i)$ and $b_1(Y_j)$.  Theorem \ref{thm:absgr} will follow readily from the following proposition:

\begin{proposition}\label{prop:grshift}
Suppose that $\{X_i\}$ is a grading admissible exhaustion of $X$ with respect to an asymptotic spin$^c$ structure $\s$. 
Then
\[ \gr(F_{W_{ij}, \s |_{W_{ij}}}(x)) - \gr(x) = \frac{b_1(Y_j) - b_1(Y_i)}{2} \]
where $x \in \HF^+(Y_i, \s|_{Y_i})$ is a homogenously graded element.
\end{proposition}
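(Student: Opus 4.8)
The plan is to reduce the statement to the Ozsv\'ath--Szab\'o dimension-shift formula for cobordism maps and then show that grading admissibility makes every term in that formula computable from Betti numbers. Write $W = W_{ij}$, regarded as a cobordism from $Y_i$ to $Y_j$ with $\partial W = -Y_i \sqcup Y_j$. For a homogeneous class $x$, the induced map $F_{W,\s|_W}$ on $\HF^+$ — equivalently on the reduced quotient $\HFred$ used in the definition of $\HE$ — shifts the absolute $\Q$-grading by
\[ \tfrac{1}{4}\bigl(c_1(\s|_W)^2 - 2\chi(W) - 3\sigma(W)\bigr), \]
valid since $\s$ restricts to torsion $\spinc$ structures on $Y_i$ and $Y_j$ (a tacit standing assumption, as the absolute grading is defined only in the torsion case). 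So I would argue separately that $\sigma(W) = 0$, that $c_1(\s|_W)^2 = 0$, and that $\chi(W) = b_1(Y_i) - b_1(Y_j)$, and then substitute.

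First I would dispatch the signature: Novikov additivity for $X_j = X_i \cup_{Y_i} W$ gives $\sigma(X_j) = \sigma(X_i) + \sigma(W)$, and $|\sigma(X_k)| \le b_2(X_k) = 0$ forces $\sigma(W) = 0$. For $c_1(\s|_W)^2$, the point I would establish is that the rational intersection form of $W$ is trivial: the Mayer--Vietoris sequence of $X_j = X_i \cup_{Y_i} W$, with $H_2(X_i;\Q) = H_2(X_j;\Q) = 0$, shows $H_2(Y_i;\Q) \twoheadrightarrow H_2(W;\Q)$, so every rational $2$-class in $W$ is carried by $\partial W$ and dies in $H_2(W,\partial W;\Q)$. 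Via Lefschetz duality this means $j^*\colon H^2(W,\partial W;\Q) \to H^2(W;\Q)$ vanishes; since $c_1(\s|_W)$ restricts to a torsion class on $\partial W$, its image in $H^2(W;\Q)$ lies in $\operatorname{im}(j^*) = 0$, hence $c_1(\s|_W)$ is torsion and $c_1(\s|_W)^2 = 0$.

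Then I would compute $\chi(W)$, which is where the hypotheses $b_2(X_k) = b_3(X_k) = 0$ genuinely enter. Since $X_k$ is connected with nonempty boundary, $b_0(X_k) = 1$ and $b_4(X_k) = 0$, so $\chi(X_k) = 1 - b_1(X_k)$; gluing along the closed $3$-manifold $Y_i$ (with $\chi(Y_i) = 0$) gives $\chi(W) = \chi(X_j) - \chi(X_i) = b_1(X_i) - b_1(X_j)$. To finish I would run the rational homology long exact sequence of $(X_k,Y_k)$, using Lefschetz duality $H_m(X_k,Y_k;\Q) \cong H^{4-m}(X_k;\Q)$ and $b_2(X_k) = b_3(X_k) = 0$, which collapses to an isomorphism $H_1(Y_k;\Q) \cong H_1(X_k;\Q)$, i.e. $b_1(X_k) = b_1(Y_k)$. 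Substituting $\sigma(W) = 0$, $c_1(\s|_W)^2 = 0$, and $\chi(W) = b_1(Y_i) - b_1(Y_j)$ into the shift formula yields $\tfrac14\bigl(-2(b_1(Y_i) - b_1(Y_j))\bigr) = \tfrac12\bigl(b_1(Y_j) - b_1(Y_i)\bigr)$, as claimed.

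The step I expect to be the main obstacle — or at least the one needing the most care — is the vanishing of $c_1(\s|_W)^2$: I must check that the intersection form of $W$ is genuinely trivial over $\Q$ (not merely of zero signature), and then feed this through the definition of $c_1^2$ for a cobordism whose boundary $\spinc$ structure is only torsion, being careful that the Lefschetz-duality ladder really does identify the vanishing of $H_2(W;\Q) \to H_2(W,\partial W;\Q)$ with that of $j^*$ on $H^2$. The rest is routine bookkeeping with Betti numbers together with the standard shift formula.
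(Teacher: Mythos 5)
Your proof is correct and follows essentially the same route as the paper: apply the Ozsv\'ath--Szab\'o dimension-shift formula and reduce to showing $c_1(\s|_{W})^2=\sigma(W)=0$ and $\chi(W)=b_1(Y_i)-b_1(Y_j)$, with the last step being exactly the paper's Lemma~\ref{lem:b1XXYY} ($b_1(X_k)=b_1(Y_k)$ via the long exact sequence of the pair and duality). You simply supply in full the Novikov-additivity and Mayer--Vietoris/Lefschetz-duality details that the paper compresses into ``follows from $b_2(X_i)=b_3(X_i)=0$,'' and these details are accurate.
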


In order to prove Proposition \ref{prop:grshift}, we will need the following lemma:
\begin{lemma}\label{lem:b1XXYY}
Let $\{X_i\}$ be a grading admissible exhaustion of $X$. Then
\begin{equation*}
	b_1(X_i) = b_1(Y_i).
\end{equation*}
\end{lemma}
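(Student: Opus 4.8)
The plan is to combine the long exact sequence of the pair $(X_i, Y_i)$ with Poincar\'e--Lefschetz duality for the compact $4$-manifold $X_i$ with boundary $Y_i$. First I would write down the portion of the long exact sequence in rational (co)homology
\[
H^1(X_i, Y_i) \to H^1(X_i) \to H^1(Y_i) \to H^2(X_i, Y_i) \to H^2(X_i),
\]
and observe that by Poincar\'e--Lefschetz duality $H^k(X_i, Y_i) \cong H_{4-k}(X_i)$, so that $H^1(X_i,Y_i)\cong H_3(X_i)$ and $H^2(X_i,Y_i)\cong H_2(X_i)$. The grading admissibility hypothesis (Definition \ref{def:gradingadmissible}) says $b_2(X_i) = b_3(X_i) = 0$, hence both $H^1(X_i, Y_i)$ and $H^2(X_i, Y_i)$ vanish rationally. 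Feeding this into the exact sequence forces $H^1(X_i) \to H^1(Y_i)$ to be an isomorphism over $\Q$, so $b_1(X_i) = b_1(Y_i)$.

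Concretely, the four-term exact sequence becomes $0 \to H^1(X_i;\Q) \to H^1(Y_i;\Q) \to 0$, and the conclusion is immediate. The only things being used are the admissibility of the exhaustion (so the $W_{ij}$ and the $X_i$ have connected boundary, which is already built into the setup) and the rational vanishing of $H^2$ and $H^3$ of $X_i$; the statement is purely about ordinary (co)homology of a compact manifold with boundary and does not involve Heegaard Floer homology at all.

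I do not anticipate a genuine obstacle here: the lemma is essentially a bookkeeping consequence of duality. The one small point worth stating carefully is that $Y_i = \partial X_i$ is connected (guaranteed by the exhaustion hypotheses) and that we work with field coefficients so that duality applies without torsion subtleties; alternatively one can phrase everything with $\Q$ coefficients so that $b_k$ literally means $\dim_\Q H_k(-;\Q)$. A remark I would include is that this is exactly the mechanism by which grading admissibility pins down the grading shift of the cobordism maps $W_{ij}$ in Proposition \ref{prop:grshift}: once $b_1(X_i) = b_1(Y_i)$ and $b_1(X_j) = b_1(Y_j)$, a Mayer--Vietoris / excision computation on $W_{ij} = X_j - \operatorname{int} X_i$ controls $b_1(W_{ij})$, $b_2^+(W_{ij})$, and $b_2^-(W_{ij})$ in terms of $b_1(Y_i)$ and $b_1(Y_j)$ alone, which is what the standard grading-shift formula for $F_{W,\s}$ needs as input.
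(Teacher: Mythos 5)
Your proof is correct and is essentially the paper's own argument in cohomological form: the paper runs the homology long exact sequence of the pair $(X_i,Y_i)$ to get $H^1(Y_i)\cong H_2(Y_i)\cong H_3(X_i,Y_i)\cong H^1(X_i)$, while you run the cohomology version and dualize the relative groups, but both rest on the same inputs (the LES of the pair, Poincar\'e--Lefschetz duality, and $b_2(X_i)=b_3(X_i)=0$). No gaps.
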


\begin{proof}
Consider the long exact sequence of the pair $(X_i, Y_i)$ with $\Q$-coefficients:
\begin{equation}\label{eq:lesXY}
	 \cdots \to H_3(X_i) \to H_3(X_i, Y_i) \to H_2(Y_i) \to H_2(X_i) \to \cdots
\end{equation}
Since $\{X_i\}$ is grading admissible, we have that $H_3(X_i) = H_2(X_i) = 0$. Then
\begin{align*}
	H^1(Y_i) &= H_2(Y_i) \\
		&= H_3(X_i, Y_i) \\
		&= H^1(X_i)
\end{align*}
where the first equality follows from Poincar\'e duality, the second from exactness of \eqref{eq:lesXY}, and the third from Poincar\'e-Lefschetz duality.
\end{proof}

With this lemma in hand, we can now prove Proposition \ref{prop:grshift}.
\begin{proof}[Proof of Proposition \ref{prop:grshift}]
We have that
\begin{align*}
	 \gr(F_{W_{ij},\s}(x)) - \gr(x) &= \frac{c_1(\s)^2-3\sigma(W_{ij}) -2 \chi(W_{ij})}{4} \\
	 					&= \frac{-\chi(W_{ij})}{2} \\
						&= \frac{b_1(X_j) - b_1(X_i)}{2} \\
						&=  \frac{b_1(Y_j) - b_1(Y_i)}{2} 
\end{align*}
where the first equality is \cite[Theorem 7.1]{OS-triangles}, the second and third equalities follows from the fact that $b_2(X_i) = b_3(X_i)  = 0$ for all $i$, and the fourth equality follows from Lemma \ref{lem:b1XXYY}.
\end{proof}

We are now ready to prove Theorem \ref{thm:absgr}.

\begin{proof}[Proof of Theorem \ref{thm:absgr}]
Let $\{X_i \}$ be a grading admissible exhaustion with respect to an asymptotic $\spinc$ structure $\s$. We first show that there exists an absolute grading on the direct limit. Recall that 
\[ \varinjlim \HFred(Y_i, \s|_{Y_i})[-b_1(Y_i)/2] = \bigsqcup_i \HFred(Y_i, \s|_{Y_i})[-b_1(Y_i)/2] / \sim \]
where for $x_i \in \HFred(Y_i, \s|_{Y_i})$ and $x_j \in \HFred(Y_j, \s|_{Y_j})$, we have that $x_i \sim x_j$ if there exists some $k$ such that $F_{W_{ik}, \s|_{W_{ik}}}(x_i) = F_{W_{jk}, \s|_{W_{jk}}}(x_j)$. Let $\gr$ denote the absolute grading on $\HFred(Y_i, \s|_{Y_i})$ and $\gr_\sh$ the absolute grading on $\HFred(Y_i, \s|_{Y_i})[-b_1(Y_i)/2]$; that is,
\[ \gr_\sh(x_i) = \gr(x_i) - \frac{b_1(Y_i)}{2}. \]
Then
\begin{align*}
 	\gr_\sh(x_i) &= \gr(x_i) - \frac{b_1(Y_i)}{2} \\
		&= \gr(F_{W_{ik}, \s|_{W_{ik}}}(x_i)) -\frac{b_1(Y_k)}{2} \\
		&= \gr(F_{W_{jk}, \s|_{W_{jk}}}(x_j)) -\frac{b_1(Y_k)}{2} \\
		&= \gr(x_j) - \frac{b_1(Y_j)}{2} \\
		&= \gr_\sh(x_j)
\end{align*}
where the second and fourth equalities follow from Proposition \ref{prop:grshift}. Thus, there exists an absolute grading on the direct limit.

We now show that the absolute grading is independent of the choice of grading admissible exhaustion. The proof is identical to \cite[Proposition 3.4]{gadgil}; for completeness, we recount it here. First, note that the absolute grading is unaffected by passing to subsequences.  Let $\{X_i\}$ and $\{X'_i\}$ be two grading admissible exhaustions of $X$ with respect to an asymptotic $\spinc$ structure $\s$. By passing to subsequences, we may assume that
\[ X_1 \subset X'_1 \subset X_2 \subset X'_2 \subset \dots \]
and \cite[Lemma 2.5]{gadgil} implies that the exhaustion 
\begin{equation}\label{eq:exhaust}
	X'_1 \subset X_2 \subset X'_2 \subset \dots
\end{equation}
is admissible. Furthermore, this exhaustion is grading admissible, since $\{X_i\}$ and $\{X'_i\}$ are grading admissible. Since $X_2 \subset X_3 \subset \dots$ and $X'_1 \subset X'_2 \subset \dots$ are subsequences of the exhaustion in \eqref{eq:exhaust}, it follows that the direct limits with respect to $\{X_i\}$ and $\{X'_i\}$ agree, as desired.
\end{proof}

\section{Heegaard Floer preliminaries}

We assume the reader is familiar with Heegaard Floer homology \cite{Os-3mfds, OS-abs}, in particular knot Floer homology \cite{OS-knots, Rasmussen-PhD} and the mapping cone formula \cite{OS-integer}. We briefly recall these ideas, primarily to establish notation.

\subsection{The knot Floer complex}
To a knot $K \subset S^3$, we consider the associated chain complex $\CFKm(K)$ that is freely and finitely generated over $\F[U]$. Each generator has a Maslov, or homological, grading, as well as an Alexander grading. The Alexander grading of the generators induces a filtration on $\CFKm(K)$, and the filtered chain homotopy type of $\CFKm(K)$ is a knot invariant.

As an $\F$-vector space, $\CFKm(K) = \oplus_{i,j} \CFKm(K, i, j)$, where $i \in \Z_{\leq 0}$ is the negative of the $U$-exponent and $j\in \Z$ is the Alexander grading. It is common to depict $\CFKm(K)$ in the $(i,j)$-plane. That is, we place the generators along the vertical axis, according to their Alexander grading. (The Maslov grading is suppressed from this picture.) The variable $U$ shifts a generator one unit down and one unit to the left. See Figure \ref{fig:CFK41} for an example. We will also consider the complex  $\CFKi(K) = \CFKm(K) \otimes_{\F[U]} \F[U, U^{-1}]$.  We have a similar decomposition $\CFKi(K) = \oplus_{i,j \in \Z} CFK^\infty(K,i,j)$.

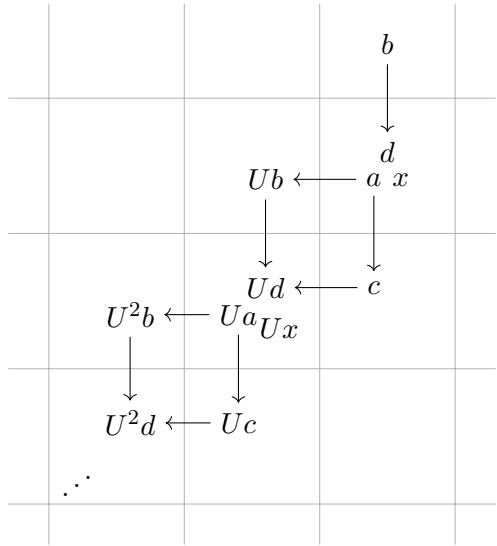
\begin{figure}[ht]
\begin{tikzpicture}[scale=1.8]
	\draw[step=1, black!30!white, very thin] (-2.3, -2.3) grid (1.3, 1.7);

	\node at (0.5, 1.4) (b) {$b$};
	\node at (0.5, 0.6) (d) {$d$};

	\node at (0.6, 0.4) (x) {$x$};

	\node at (0.4, 0.4) (a) {$a$};
	\node at (-0.4, -0.4) (Ud) {$Ud$};
	\node at (-0.4, 0.4) (Ub) {$Ub$};
	\node at (0.4, -0.4) (c) {$c$};
	
	\node at (-0.3, -0.7) (Ux) {$Ux$};

	\node at (-0.6, -0.6) (Ua) {$Ua$};
	\node at (-1.4, -1.4) (U2d) {$U^2d$};
	\node at (-1.4, -0.6) (U2b) {$U^2b$};
	\node at (-0.6, -1.4) (Uc) {$Uc$};	

	\draw[->] (b) to node[]{} (d);

	\draw[->] (a) to node[]{} (Ub);
	\draw[->] (a) to node[]{} (c);
	\draw[->] (Ub) to node[]{} (Ud);
	\draw[->] (c) to node[]{} (Ud);

	\draw[->] (Ua) to node[]{} (U2b);
	\draw[->] (Ua) to node[]{} (Uc);
	\draw[->] (U2b) to node[]{} (U2d);
	\draw[->] (Uc) to node[]{} (U2d);

	\node at (-1.8, -1.8) (dots) {$\iddots$};
	
\end{tikzpicture}
\caption{The knot Floer complex $\CFKm(4_1)$.}
\label{fig:CFK41}
\end{figure}

We will also be interested in null-homologous knots $J$ in a $3$-manifold $Y$, where $Y$ is a homology $S^1 \times S^2$. For the torsion $\spinc$ structure $\s_0$ on $Y$, we obtain an analogous picture for $\CFKi(Y,J, \s_0)$ as for knots in $S^3$.

\subsection{The mapping cone formula}
We now recall the mapping cone formula for determining $\HF^+(S^3_n(K))$, following \cite{OS-integer}. The formula more generally holds for null-homologous knots in a general $3$-manifold $Y$, as long as we restrict to torsion $\spinc$ structures.

Let $C=\CFKi(Y,K, \s_0)$ where $\s_0$ is a torsion $\spinc$ structure on $Y$ and recall that as a vector space, we have the decomposition $C=\bigoplus_{i,j \in \Z} C(i,j)$.  For a subset $X$ of $\mathbb{Z}^2$, let $CX = \bigoplus_{(i,j) \in X} C(i,j)$.
We have the following quotient complexes 
\begin{align*}
	A_s &= C\{ \max(i, j-s) \geq 0 \} \\
	B &= C\{ i \geq 0 \}.
\end{align*}
We also have maps
\begin{align*}
	v_s \colon A_s \to B \\
	h_s \colon A_s \to B.
\end{align*}
The map $v_s$ is given by projection. The map $h_s$ is projection onto $C\{ j \geq s \}$ followed by a chain homotopy equivalence between $C\{ j \geq s \}$ and $B$.  For more details on the chain homotopy equivalence, see \cite{OS-integer}.  Consider the map
\[ \Phi_n \colon \bigoplus_{s \in \Z} A_s \to \bigoplus_{s \in \Z} B_s \]
where each $B_s$ is just a copy of $B$ and 
\[ \Phi_n = v_s + h_s \]
where
\begin{align*}
	v_s &\colon A_s \to B_s \\
	h_s &\colon A_s \to B_{s+n}.
\end{align*}
Then the main result of \cite{OS-integer} states that 
\[ H_*(\Cone \Phi_n) = \HF^+(Y_n(K)), [\s_0[) \]
where $[\s_0]$ denotes summing over the $\spinc$ structures on $Y_n(K)$ which are cobordant to $\s_0$ across the associated 2-handle cobordism.
Moreover, $\Cone \Phi_n$ inherits a relative $\Z$-grading from the gradings on $A_s$ and $B$, together with the fact that in the complex $\Cone \Phi_n$, the map $\Phi_n$ lowers grading by one. This relative grading can be lifted to an absolute $\Q$-grading which recovers the absolute $\Q$-grading on $HF^+(Y_n(K)), [\s_0])$, restricted to the appropriate $\spinc$ structures. We will be interested in the case $n=-1$, in which case the absolute grading on $\Cone \Phi_n$ is given by just keeping the grading that $B_0$ inherits from $\CFKi$, and in the case $n=0$, where the absolute grading on $\Cone \Phi_n$ is given by shifting the grading on $B_0$ down by $1/2$.

Lastly, we observe that in the case when $\HFred(Y)=0$, we may pass to homology (i.e., consider $v_{s,*} \colon H_*(A_s) \to H_*(B)$ instead of $v_s$, and similarly for $h_{s,*}$) before taking the mapping cone.
\section{The cobordism map between $0$-surgeries}
\subsection{A key theorem}
The goal of this section is to prove the following theorem about cobordism maps between $0$-surgeries along iterated Whitehead doubles. Note that in the statement of the theorem, the knot $K$ is already a Whitehead double.

\begin{theorem}\label{thm:0-surgerycobordism}
Let $K \subset S^3$ be the Whitehead double of a slice knot. 
\begin{enumerate}
	\item \label{it:0-surgerycobordism1} We have
		\[
			\HF^+(S^3_0(\Wh(K))) =  \cT^+_{(1/2)} \oplus \cT^+_{(-1/2)} \oplus \Big( \HFred (S^3_0(K)) \otimes V \Big)
		\]
		where $V = \F^2_{(0)} \oplus \F^2_{(-1)}$. 
	\item \label{it:0-surgerycobordism2} The cobordism map from $\HF^+(S^3_0(K)) \to \HF^+(S^3_0(\Wh(K)))$ given by the $1$- and $2$-handle attachment in Figure \ref{fig:1-2-hcobordism} is injective on the highest graded summand of $\HFred(S^3_0(K))$.
\end{enumerate}
\end{theorem}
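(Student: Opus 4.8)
\medskip
\noindent\textbf{Strategy.} The plan is to prove both items together using the surgery exact triangle associated to a small circle $\gamma$ encircling the clasp of the Whitehead pattern. The $2$-handle in Figure~\ref{fig:1-2-hcobordism} is attached to $S^3_0(K)\#(S^1\times S^2)$ along $\gamma$ with the $0$-framing; the three fillings of $\gamma$ produce $S^3_0(K)\#(S^1\times S^2)$ (the meridional filling), $S^3_0(\Wh(K))$, and the third manifold $M_3$ of Figure~\ref{fig:KJinY}. Writing $Z:=S^3_0(K)\#(S^1\times S^2)$, these sit in the exact triangle of \cite{OS-abs},
\[ \cdots\to\HF^+(M_3)\xrightarrow{\;h\;}\HF^+(Z)\xrightarrow{\;f\;}\HF^+(S^3_0(\Wh(K)))\xrightarrow{\;g\;}\HF^+(M_3)\to\cdots, \]
where $f$ is, up to a sum over $\spinc$ structures, the $2$-handle cobordism map from Figure~\ref{fig:1-2-hcobordism}. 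By Proposition~\ref{prop:grshift} the composite $1$- and $2$-handle cobordism map $\tilde f\colon\HF^+(S^3_0(K))\to\HF^+(S^3_0(\Wh(K)))$ is grading preserving (since $b_1$ is unchanged), and I would likewise compute the grading shifts of $f$, $g$, $h$ directly from $(c_1^2-3\sigma-2\chi)/4$.

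\medskip
\noindent\textbf{The two easy vertices.} Since $K$ is a Whitehead double of a slice knot it has genus one and $\tau(K)=0$, hence $V_0(K)=0$ and $\HF^+(S^3_0(K))=\cT^+_{(1/2)}\oplus\cT^+_{(-1/2)}\oplus\HFred(S^3_0(K))$, with $\HFred$ supported in the torsion $\spinc$ structure. Because $\HF^-(S^1\times S^2)$ is free of rank two over $\F[U]$, the connected sum formula \cite{Os-3mfds} yields
\[ \HF^+(Z)\cong\HF^+(S^3_0(K))_{(c)}\oplus\HF^+(S^3_0(K))_{(c-1)} \]
for an explicit shift $c$; thus $\HFred(Z)$ is two grading-shifted copies of $\HFred(S^3_0(K))$, and the $1$-handle cobordism map carries $\HFred(S^3_0(K))$ isomorphically onto one of them.

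\medskip
\noindent\textbf{The third vertex, and the main difficulty.} The heart of the proof --- and the step I expect to be the main obstacle --- is computing $\HF^+(M_3)$ together with its absolute $\Q$-grading. Reading off Figure~\ref{fig:KJinY}, $M_3$ is (up to connected sum with $S^1\times S^2$) a $0$-surgery on a satellite $K_J$ of $K$ whose companion $J$ is a torus knot. I would compute $\HF^+(M_3)$ by combining: the mapping cone formula of \cite{OS-integer}; the K\"unneth formula to split off the factor contributed by $J$, which --- being an L-space knot --- has $\CFKi(J)$ a staircase, so that the relevant mapping cone becomes explicit; and $d$-invariant computations (again exploiting $\tau(K)=0$, $V_0(K)=0$) to identify the towers. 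The expected outcome is that $\HFred(M_3)$ consists of two grading-shifted copies of $\HFred(S^3_0(K))$, positioned so that, after accounting for the grading shift of $h$, the image $h(\HFred(M_3))$ lies strictly below the copy of $\HFred(S^3_0(K))$ inside $\HFred(Z)$ that $\tilde f$ must land in.

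\medskip
\noindent\textbf{Conclusion.} For item (1): feeding the three vertex computations and the grading shifts into the exact triangle, a rank count (with Hedden's determination of $\HFKhat(\Wh(K))$, recalled in Section~\ref{sec:Whitehead}, as an independent check) leaves $\cT^+_{(1/2)}\oplus\cT^+_{(-1/2)}\oplus\bigl(\HFred(S^3_0(K))\otimes V\bigr)$ with $V=\F^2_{(0)}\oplus\F^2_{(-1)}$ as the only module consistent with all gradings; equivalently this can be derived directly from the $0$-surgery mapping cone formula for $\Wh(K)$, once one knows from the genus-one structure theorem and $\tau(K)\le 0$ that $\CFKi(\Wh(K))=\CFKi(U)\oplus(\text{box summands})$ whose number and heights are controlled by $\HFred(S^3_0(K))$. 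For item (2): let $\F^k$ denote the top-graded summand of $\HFred(S^3_0(K))$; the $1$-handle map embeds it as a graded summand of $\HFred(Z)$. Since every element of a $\cT^+$ is infinitely $U$-divisible while $\HFred(Z)$ has no nonzero infinitely $U$-divisible element, the part of $\im h=\ker f$ meeting $\HFred(Z)$ is exactly $h(\HFred(M_3))$, which by the previous step sits in strictly lower grading than this embedded copy of $\F^k$. Hence $f$ is injective there, so $\tilde f$ (the composite of the $1$- and $2$-handle maps) is injective on $\F^k$, which is item (2). The delicate bookkeeping is tracking the $\spinc$-sums and grading shifts in the triangle precisely enough to make both the rank count in (1) and the grading inequality in (2) go through.
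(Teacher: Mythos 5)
Your proposal follows essentially the same route as the paper: the surgery exact triangle on the clasp circle, explicit computation of all three vertices (the hard one via the mapping cone formula, the K\"unneth formula, L-space-knot rigidity, and $d$-invariants), and a grading argument showing that the top graded summand of $\HFred(S^3_0(K)\#S^1\times S^2)$ avoids $\im h=\ker f$ because $h$ shifts gradings down. The only imprecision is your description of the third vertex, which is not a $0$-surgery on a satellite with torus-knot companion but $-1$-surgery on the connected sum $J\#K$ where $J$ is a genus-one knot in $Y=S^3_0(T_{2,3})$ (a manifold with $b_1=1$), so the staircase structure of $\CFKm(Y,J)$ must be extracted from a generalization of the L-space-knot theorem together with the $H_1$-action filtration and the $d$-invariants of $Y$ and $Y_{-1}(J)=S^1\times S^2$ --- exactly the toolbox you list, so the argument goes through as you outline it.
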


%
%
%

\begin{remark}\label{rem:0-surgerycobordism}
Theorem \ref{thm:0-surgerycobordism} holds more generally whenever $K$ is a genus one knot whose knot Floer complex $\CFKm(K)$ is a direct sum of a single generator and any number of $1 \times 1$ boxes, as in Figure \ref{fig:box}.
\end{remark}

The strategy for proving Theorem \ref{thm:0-surgerycobordism} is as follows. First, the cobordism map for the $1$-handle is described in \cite{OS-triangles}.  More precisely, there is a canonical isomorphism  
\[
HF^+(S^3_0(K) \# S^1 \times S^2) \cong HF^+(S^3_0(K)) \otimes_{\mathbb{F}[U]}  \left(\mathbb{F}[U]_{(-1/2)} \oplus \mathbb{F}[U]_{(1/2)} \right).
\]
The map for the 1-handle cobordism includes $HF^+(S^3_0(K))$ into the copy with the grading shift of $+1/2$. 
The difficulty then lies in determining the map $F$ for the $2$-handle cobordism.  We determine the map $F$ using the surgery exact triangle in Figure \ref{fig:triangle}. Indeed, our strategy is to compute $\HF^+$ of each of the three 3-manifolds, and then it turns out that a grading and exactness argument completes the proof.

\begin{figure}[ht]
\centering
\labellist
	\pinlabel {$0$} at 30 243
	\pinlabel {$K$} at 30 203
	\pinlabel {$\infty$} at 80 203
	\pinlabel {$0$} at 215 243
	
	\pinlabel {$0$} at 298 243
	\pinlabel {$K$} at 298 203
	\pinlabel {$0$} at 348 203
	\pinlabel {$0$} at 483 243
	
	\pinlabel {$0$} at 160 98
	\pinlabel {$K$} at 160 58
	\pinlabel {$1$} at 210 58
	\pinlabel {$0$} at 343 98
	
	\pinlabel {$F$} at 250 212	
	\pinlabel {$\Phi$} at 380 125	
	\pinlabel {$\Psi$} at 125 120	

\endlabellist
\includegraphics[scale=0.75]{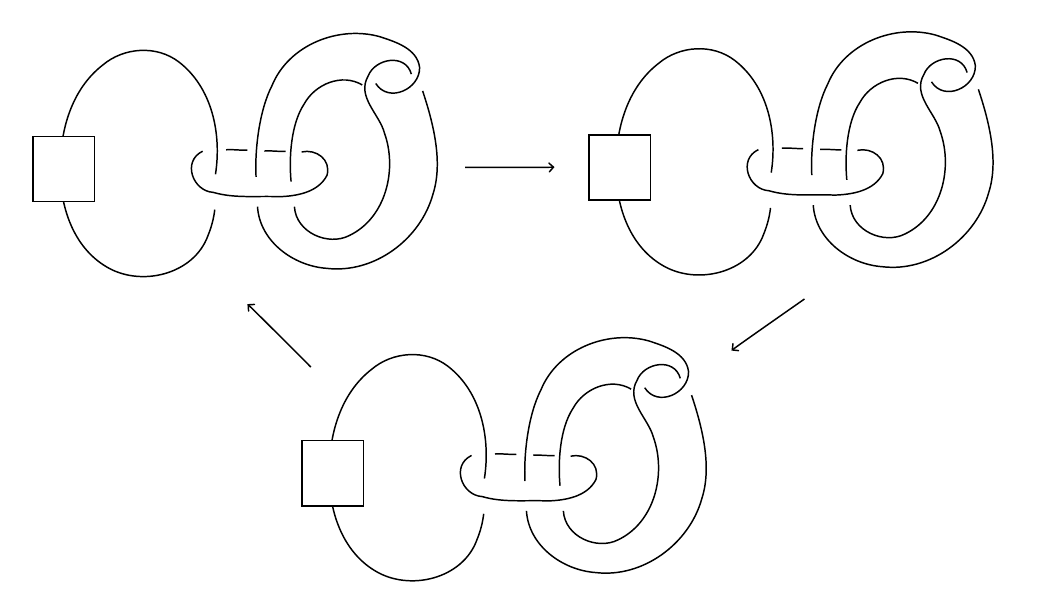}
\caption{The manifolds involved in the surgery exact triangle. (We mildly abuse notation and use $F, \Phi$, and $\Psi$ to refer to both the cobordisms and their induced maps on $\HF^+$.)}
\label{fig:triangle}
\end{figure}

Computing $\HF^+$ for the top two 3-manifolds in Figure \ref{fig:triangle} is straightforward, as the manifolds are $S^3_0(K)$ and $S^3_0(\Wh(K))$ respectively, and Hedden has given a formula for the knot Floer homology of Whitehead doubles \cite{Hedden-Whitehead}. Computing $\HF^+$ for the bottom 3-manifold is slightly more involved. Blowing down the $+1$-framed unknot yields the Kirby diagram in Figure \ref{fig:KJinY}, which is the connected sum of $K \subset S^3$ with the knot $J \subset Y$ in Figure \ref{fig:KirbyY}. That is, $J$ is a null-homologous knot in $0$-surgery on the right-handed trefoil.  If we can compute the knot Floer complex of $J \# K$, then we can apply the mapping cone formula to compute the Floer homology of this remaining 3-manifold, $Y_{-1}(J \# K)$.

\begin{figure}[ht]
\centering
\labellist
	\pinlabel {$K$} at 31 82
	\pinlabel {$0$} at 155 150
	\pinlabel {$-1$} at 134 72
	\pinlabel {$-1$} at 38 109
\endlabellist
\includegraphics[scale=0.9]{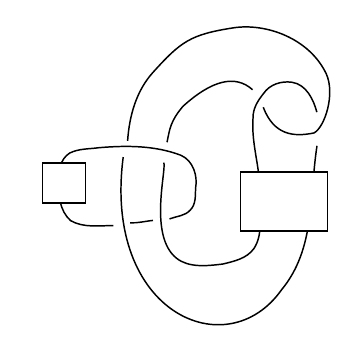}
\caption{}
\label{fig:KJinY}
\end{figure}

\begin{figure}[ht]
\centering
\labellist
	\pinlabel {$J$} at 8 75
	\pinlabel {$0$} at 140 150
	\pinlabel {$-1$} at 115 72
\endlabellist
\includegraphics[scale=0.9]{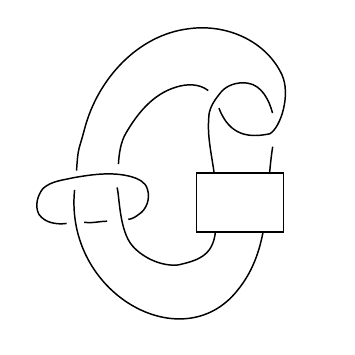}
\caption{The knot $J$ in $Y$.}
\label{fig:KirbyY}
\end{figure}

Since we have the K\"unneth formula at hand and we know $\CFKm(S^3, K)$, it is sufficient to compute the knot Floer complex $\CFKm(Y,J)$. We carry this computation out in Lemma \ref{lem:JYcomp} by comparing the $d$-invariants and $\HFred$ for $Y$ to those of $Y_{-1}(J)$. The key observation is that $J$ is a knot in a manifold with trivial $\HFred$, and $-1$-surgery along $J$ also has trivial $\HFred$. Then familiar results about (a mild generalization of) L-space knots in L-spaces yield the desired formula for the Floer homology of $Y_{-1}(J \# K)$, needed for the exact triangle.  That is the subject of Lemma~\ref{lem:HFcomp}.  

We now proceed with this strategy for proving Theorem \ref{thm:0-surgerycobordism}.

\begin{lemma}\label{lem:JYcomp}
The knot Floer complex $\CFKm(Y,J)$ is generated over $\F[U]$ by $a,b,c,$ and $d$ with the following gradings and boundary map:
\begin{center}
\begin{tabular}{*{10}{@{\hspace{10pt}}c}}
\hline
& && $m$ && $A$ && $\d$  & \\
\hline
& $a$ && $1/2$ && $1$ && $b$\\ 
& $b$ && $-1/2$ && $0$ && $0$\\ 
& $c$ && $-3/2$ && $-1$ && $Ub$\\ 
& $d$ && $-1/2$ && $0$ && $0$\\ 
\hline
\end{tabular}
\vspace{5pt}
\end{center}
which we depict graphically in Figure \ref{fig:JY}.
\end{lemma}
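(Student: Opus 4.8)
The plan is to identify $J$ concretely enough to reduce the computation to known calculations. Recall $J \subset Y = S^3_0(T_{2,3})$ is the null-homologous knot from Figure \ref{fig:KirbyY}: $-1$-surgery on $J$ (viewed inside $Y$) undoes the $0$-framed $2$-handle and returns $S^3$, so in fact $Y_{-1}(J) \cong S^3$. Equivalently, $J$ can be seen as a knot in the $0$-surgery manifold on the trefoil obtained by a single Rolfsen twist; I would first verify via Kirby calculus that $Y_{-1}(J) \cong S^3$ (and, for the $d$-invariant bookkeeping, track the $\spinc$ structure, which is forced since $Y$ has a unique torsion $\spinc$ structure $\s_0$ and $Y_{-1}(J)$ has a unique $\spinc$ structure).

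Next I would assemble the numerical inputs. We know $\HF^+(Y, \s_0) = \cT^+_{(1/2)} \oplus \cT^+_{(-1/2)}$ with $\HFred(Y, \s_0) = 0$ (this is the standard computation for $0$-surgery on the right-handed trefoil: $d_{1/2}(Y) = 1/2$, $d_{-1/2}(Y) = -1/2$), and $\HF^+(Y_{-1}(J)) = \HF^+(S^3) = \cT^+_{(0)}$, so $\HFred(Y_{-1}(J)) = 0$ as well. The genus of $J$ is $1$ (it is a Whitehead-type double, visibly genus one from the diagram, and it is not the unknot since $\HF^+(Y,\s_0) \neq \HF^+(Y_0(\text{unknot}))$... more precisely $\widehat{HFK}(Y,J)$ is nontrivial in Alexander grading $1$). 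The strategy is then to run the argument for L-space knots: since $Y$ is an ``L-space'' in the relevant sense ($\HFred(Y,\s_0) = 0$) and a single negative surgery on $J$ is also an L-space in this sense, the standard characterization (Ozsv\'ath--Szab\'o, Hedden, Rasmussen; the mild generalization alluded to in the excerpt, replacing $S^3$ by a homology $S^1 \times S^2$ with $\HFred = 0$) forces $\CFKm(Y, J, \s_0)$ to have the ``staircase'' form of a genus-one L-space knot. For genus one, that staircase is exactly the $\CFKm$ of the right-handed trefoil shifted appropriately: one generator $b$ with $\d b = 0$ in Alexander grading $0$, one generator $a$ in Alexander grading $1$ with $\d a = b$, and one generator $c$ in Alexander grading $-1$ with $\d c = Ub$, plus a fourth generator $d$ in Alexander grading $0$ with $\d d = 0$ accounting for the fact that $b_1(Y) = 1$ contributes an extra free summand (i.e. $\widehat{HF}(Y,\s_0)$ has rank $2$, not rank $1$). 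The Maslov gradings are then pinned down by: (i) the requirement that $v_{s,*}$ and $h_{s,*}$ reproduce $\HF^+(Y_{-1}(J)) = \cT^+_{(0)}$ via the mapping cone formula with $n = -1$ (using the absolute grading convention recalled in the excerpt, where $B_0$ keeps its $\CFKi$ grading), together with $\HF^+(Y_0(J)) = \HF^+(Y)$ recovered with the $n=0$ convention (shift $B_0$ down by $1/2$); (ii) internal consistency of $\d$ with the grading drop of $1$ and the $U$-action dropping grading by $2$. Carrying out this bookkeeping gives $m(a) = 1/2$, $m(b) = m(d) = -1/2$, $m(c) = -3/2$, as in the table.

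The main obstacle I anticipate is the grading normalization: there are two absolute gradings in play (the one on $\CFKm(Y,J,\s_0)$ coming from $\HF^\infty(Y,\s_0)$, and the one we want to match via the mapping cone), and one must be careful that the ``$b_1(Y)/2$'' shifts are inserted consistently so that the final $\HF^+$ computations in Lemma \ref{lem:HFcomp} come out with the correct $\cT^+_{(1/2)} \oplus \cT^+_{(-1/2)}$ gradings. Concretely, I would fix the grading by first computing $\HF^\infty(Y, \s_0) \cong \F[U, U^{-1}] \otimes H_*(S^1)$ with its standard grading, note this forces the ``symmetrized'' grading on the staircase part, and then check that with $d$ placed in Maslov grading $-1/2$ the $n = -1$ mapping cone has homology $\cT^+_{(0)}$ on the nose — this last check is the sanity check that the whole normalization is right. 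The rest — the verification that $Y_{-1}(J) = S^3$ and $Y_0(J) = Y$, and that $J$ has genus exactly one — is routine Kirby calculus and a glance at $\widehat{HFK}$, so I would state these quickly. Once $\CFKm(Y,J)$ is known, the K\"unneth formula gives $\CFKm(Y, J \# K)$ and the mapping cone for $Y_{-1}(J \# K)$ becomes a finite computation feeding into Lemma \ref{lem:HFcomp}.
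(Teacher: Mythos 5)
Your overall strategy (pin down $\CFKm(Y,J)$ by comparing $d$-invariants of $Y$ and $Y_{-1}(J)$ and invoking the staircase rigidity for L-space-type knots) is the same as the paper's, but two of your numerical inputs are wrong, and the grading bookkeeping rests entirely on them. First, $Y_{-1}(J)$ is not $S^3$: since $J$ is null-homologous in $Y=S^3_0(T_{2,3})$, any integral surgery on $J$ has $H_1 \cong H_1(Y)\oplus \Z/n\Z$, so $Y_{-1}(J)$ has $b_1=1$; it is in fact $S^1\times S^2$, with $\HF^+ = \cT^+_{(1/2)}\oplus\cT^+_{(-1/2)}$, not $\cT^+_{(0)}$. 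Second, $\HF^+(S^3_0(T_{2,3})) = \cT^+_{(-3/2)}\oplus\cT^+_{(-1/2)}$ (since $V_0(T_{2,3})=1$ gives $d_{1/2}=1/2-2V_0=-3/2$); the values $d_{\pm 1/2}=\pm 1/2$ you quote are those of $S^3_0(\mathrm{unknot})=S^1\times S^2$, not of $0$-surgery on the trefoil. With your numbers the comparison degenerates (indeed $Y$ and "$Y_{-1}(J)$" would not even have the same $b_1$), and one could not conclude the complex in the lemma: the actual content is that $d_{-1/2}$ is unchanged ($-1/2$ on both sides), which produces the lone generator $d$ in the image of the $H_1$-action, while $d_{1/2}$ jumps from $-3/2$ to $+1/2$ under the $-1$-surgery, which is exactly what forces the cokernel of the $H_1$-action to be the genus-one \emph{negative} L-space staircase.

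Two smaller points. You describe the staircase as "the $\CFKm$ of the right-handed trefoil" but then write the differentials $\partial a = b$, $\partial c = Ub$, which are those of the \emph{left}-handed trefoil (this is the correct complex for the lemma, so only the name is off, but it reflects the sign confusion above: a single \emph{negative} surgery on $J$ killing $\HFred$ is the mirror of the L-space condition). Also, the paper does not need $Y_0(J)=Y$ or a second mapping-cone consistency check to fix the absolute gradings; the two $d$-invariant comparisons (on the image and cokernel of the $H_1$-action separately) already determine the Maslov gradings of $a,b,c,d$, and the final step is just resolving the extension between the two associated graded pieces by a change of basis.
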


\begin{figure}[ht]
\begin{tikzpicture}[scale=1.25]
	\node at (0, 1) (a) {\lab{a}};
	\node at (0, 0) (b) {\lab{b}};
	\node at (-1, -1) (Ub) {\lab{Ub}};
	\node at (-1, 0) (Ua) {\lab{Ua}};
	\node at (0, -1) (c) {\lab{c}};
	\node at (-0.2, -0.2) (d) {\lab{d}};
	\node at (-1.2, -1.2) (Ud) {\lab{Ud}};

	\draw[->] (a) to node[]{} (b);
	\draw[->] (Ua) to node[]{} (Ub);
	\draw[->] (c) to node[]{} (Ub);

	\node at (-1.6, -1.3) (dots) {$\iddots$};
	
\end{tikzpicture}
\caption{The knot Floer complex of $J \subset Y$.}
\label{fig:JY}
\end{figure}

\begin{proof}
Recall that $Y$ is $S^3_0(T_{2,3})$. A straightforward calculation using the mapping cone formula shows that 
\[ \HF^+(Y) = \cT^+_{(-3/2)} \oplus \cT^+_{(-1/2)}. \] 
In particular, $\HFred(Y)=0$. Furthermore, $-1$-surgery on $J$ yields $S^1 \times S^2$, and so
\[ \HF^+(Y_{-1}(J)) = \HF^+(S^3_0(U)) = \cT^+_{(1/2)} \oplus \cT^+_{(-1/2)}. \]

We observe that $J$ is a genus one knot in a manifold $Y$ with $\HFred(Y)=0$, and that $\HFred(Y_{-1}(J))$ is also zero. Thus, we are in a situation similar to that of a negative L-space knot in $S^3$. We now use $d$-invariants. First, consider the submodules of $\HF^+(Y)$ and $\HF^+(Y_{-1}(J))$ that are in the image of the $H_1$-action. Since $d_{-1/2}(Y)=d_{-1/2}(Y_{-1}(J))=-1/2$, we conclude that $\CFKm(Y, J)$ has a subcomplex consisting of a single generator $d$ in Maslov grading $-1/2$ and Alexander grading 0; that is, up to a grading shift, its knot Floer complex looks like that of the unknot in $S^3$.

We now consider the cokernel of the $H_1$-action. We have that 
\begin{itemize}
	\item $d_{1/2}(Y)=-3/2$,  
	\item $d_{1/2}(Y_{-1}(J))=1/2$, and
	\item $\HFred(Y)=\HFred(Y_{-1}(J))=0$.
\end{itemize}
By Proposition 1.1 of \cite{Krcatovich2018} (which is a consequence of \cite[Theorem 1.2]{OS-lens}), knot Floer complexes of L-space knots in $S^3$ have a particularly special form: their knot Floer complexes look like staircases. This result holds more generally for L-space knots in any integer homology sphere L-space (the proof is identical), and by mirroring, we recover a result for negative L-space knots. In particular, the unique knot Floer complex for a genus one knot on which $-1$-surgery yields a manifold with $\HFred=0$ is, up to a grading shift, the knot Floer complex of the left-handed trefoil, that is, the knot Floer complex generated by $a, b,$ and $c$ with gradings and differential as in the statement of Lemma \ref{lem:JYcomp}.

Having identified the associated graded complexes of the 2-step filtration given by the $H_1$-action, it is now straightforward to verify that, up to a change of basis, $\CFKm(Y, J)$ is as stated.  \end{proof}

Throughout, let $B[k_i]$ be the $1 \times 1$-box generated by $a, b, c, d$ with gradings and differential:
\begin{center}
\begin{tabular}{*{10}{@{\hspace{10pt}}c}}
\hline
& && $m$ && $A$ && $\d$  & \\
\hline
& $a$ && $k_i$ && $0$ && $Ub+c$\\ 
& $b$ && $k_i+1$ && $1$ && $d$\\ 
& $c$ && $k_i-1$ && $-1$ && $Ud$\\ 
& $d$ && $k_i$ && $0$ && $0$\\ 
\hline
\end{tabular}
\vspace{5pt}
\end{center}
We depict this box graphically in Figure \ref{fig:box}.

\begin{figure}[ht]
\begin{tikzpicture}[scale=2]
	\node at (0, 0) (a) {$a_{(k_i)}$};
	\node at (-1, -1) (Ud) {$Ud_{(k_i-2)}$};
	\node at (-1, 0) (Ub) {$Ub_{(k_i-1)}$};
	\node at (0, -1) (c) {$c_{(k_i-1)}$};

	\draw[->] (a) to node[]{} (Ub);
	\draw[->] (a) to node[]{} (c);
	\draw[->] (Ub) to node[]{} (Ud);
	\draw[->] (c) to node[]{} (Ud);
	
\end{tikzpicture}
\caption{A $1 \times 1$-box $B[k_i]$, with the Maslov gradings denoted by the subscript. For example, the grading of $Ub$ is $k_i-1$.}
\label{fig:box}
\end{figure}

With $\CFKm(Y,J)$ in hand, we are ready to compute the Heegaard Floer homologies of the manifolds in Figure \ref{fig:triangle}.

\begin{lemma}\label{lem:HFcomp}
Let $K \subset S^3$ be a genus one knot whose knot Floer complex $\CFKm(K)$ is of the form $x \oplus \bigoplus_{i=1}^n B[k_i]$, where $x$ is in Maslov and Alexander grading $0$ and $B[k_i]$ is the $1 \times 1$-box described above.
Then
\begin{enumerate}
	\item\label{it:HFcomp1} $\HF^+(S^3_0(K) \# S^1 \times S^2) =  \cT^+_{(1)} \oplus (\cT^+_{(0)})^2 \oplus \cT^+_{(-1)} \oplus  \bigoplus_{i=1}^n \big(\F_{(k_i)} \oplus \F_{(k_i-1)})\big)$
	\item\label{it:HFcomp2} $\HF^+(S^3_0(\Wh(K))) = \cT^+_{(1/2)} \oplus \cT^+_{(-1/2)} \oplus  \bigoplus_{i=1}^n \big( \F_{(k_i-\frac{1}{2})}^2 \oplus \F_{(k_i-\frac{3}{2})}^2 \big) $
	\item\label{it:HFcomp3} $\HF^+(Y_{-1}(J \# K)) = \cT^+_{(1/2)} \oplus \cT^+_{(-1/2)} \oplus  \bigoplus_{i=1}^n \big( \F_{(k_i-\frac{1}{2})}^2 \oplus \F_{(k_i-\frac{3}{2})}^2 \big) $.
\end{enumerate}
\end{lemma}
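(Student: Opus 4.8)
The three parts of Lemma~\ref{lem:HFcomp} are closely linked: once part~\eqref{it:HFcomp1} is established, parts~\eqref{it:HFcomp2} and~\eqref{it:HFcomp3} should follow by the same mechanism, namely feeding the relevant knot Floer complex into the mapping cone formula of \cite{OS-integer} for $n=0$ (for \eqref{it:HFcomp2}) and $n=-1$ (for \eqref{it:HFcomp3}). The starting point for \eqref{it:HFcomp1} is the hypothesis that $\CFKm(K) \simeq x \oplus \bigoplus_{i=1}^n B[k_i]$ as a filtered complex; since $\HF^+(S^3_0(K) \# S^1\times S^2) \cong \HF^+(S^3_0(K)) \otimes_{\F[U]}\big(\F[U]_{(-1/2)}\oplus \F[U]_{(1/2)}\big)$ by the $1$-handle K\"unneth formula of \cite{OS-triangles}, it suffices to compute $\HF^+(S^3_0(K))$ and then tensor, accounting for the $\pm 1/2$ grading shifts (which combine to produce the integer gradings in the stated answer). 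To compute $\HF^+(S^3_0(K))$ I would apply the mapping cone for $n=0$: the single generator $x$ contributes exactly the knot Floer complex of the unknot (up to grading shift), hence contributes the two towers, and each box $B[k_i]$, being acyclic, contributes only to $\HFred$. The contribution of a box to the mapping cone can be computed directly: in $A_s$ and $B$ a $1\times1$ box localizes to a small number of nonzero $A_s$ ($s \in \{-1,0,1\}$ up to shift, depending on the Alexander gradings in the box) with $v_s, h_s$ either isomorphisms or zero on homology, and the cone of the resulting finite diagram is a $2$-dimensional space in Maslov gradings $k_i$ and $k_i-1$. Summing over $i$ gives $\HFred(S^3_0(K)) = \bigoplus_i (\F_{(k_i)}\oplus\F_{(k_i-1)})$ — wait, one must be careful about the grading normalization for $n=0$ surgery (the $-1/2$ shift on $B_0$ recorded in the preliminaries); I would track this carefully and it should land on the stated gradings after the K\"unneth tensor.

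For part~\eqref{it:HFcomp2}, the knot is $\Wh(K)$, and by Hedden's formula \cite{Hedden-Whitehead} the knot Floer complex of $\Wh(K)$ is determined by $\tau(K)$ and the filtered complex of $K$; since $K$ is the Whitehead double of a slice knot, $\tau(K)=0$, and Hedden's formula then gives $\CFKm(\Wh(K))$ explicitly as (the unknot complex) plus a controlled number of boxes whose shifts are read off from the $k_i$. One then runs the mapping cone for $n=0$ exactly as above. Alternatively — and this is probably cleaner — I would observe that $S^3_0(\Wh(K))$ appears as one vertex of the surgery triangle of Figure~\ref{fig:triangle}, but for the purposes of this lemma the direct mapping-cone computation using Hedden's complex is self-contained, so I would take that route and leave the triangle for Theorem~\ref{thm:0-surgerycobordism}.

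For part~\eqref{it:HFcomp3}, the manifold is $Y_{-1}(J\# K)$ where $J\subset Y = S^3_0(T_{2,3})$ is the knot of Lemma~\ref{lem:JYcomp}. Here I would use the K\"unneth formula for the knot Floer complex of a connected sum: $\CFKm(Y, J\# K) \simeq \CFKm(Y,J)\otimes_{\F[U]}\CFKm(S^3,K)$. Using the explicit $\CFKm(Y,J)$ from Lemma~\ref{lem:JYcomp} (one generator $d$ giving an unknot-type summand, plus the left-handed-trefoil staircase $a,b,c$) and $\CFKm(S^3,K) = x\oplus\bigoplus B[k_i]$, the tensor product decomposes as (trefoil-staircase)$\otimes x$ $\oplus$ (trefoil-staircase)$\otimes(\text{boxes})$ $\oplus$ (unknot)$\otimes(\cdots)$; feeding this into the $n=-1$ mapping cone, the staircase-times-$x$ part reproduces $\HF^+(Y_{-1}(J)) = \cT^+_{(1/2)}\oplus\cT^+_{(-1/2)}$ (which we already know equals $\HF^+(S^1\times S^2)$), and each box tensored into the complex contributes a copy of $\F^2\oplus\F^2$ in the appropriate gradings. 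The point of the lemma — that \eqref{it:HFcomp2} and \eqref{it:HFcomp3} have \emph{identical} answers — then falls out of the computation, and is what makes the exactness-and-grading argument for Theorem~\ref{thm:0-surgerycobordism} work.

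**Main obstacle.** The routine-but-delicate part is bookkeeping the absolute $\Q$-gradings through three compositions of operations: the $1\times1$-box's internal gradings $(k_i, k_i\pm1)$, the mapping-cone grading normalization (the $+0$ shift for $n=-1$ versus the $-1/2$ shift for $n=0$ on $B_0$, as recorded in the preliminaries), and the $\pm1/2$ shifts from the connected-sum/$1$-handle K\"unneth formulas. The conceptual content — boxes are acyclic so contribute only $\HFred$, single generators contribute towers, and the staircase of the left-handed trefoil governs the torsion part — is straightforward; making the gradings in \eqref{it:HFcomp1}, \eqref{it:HFcomp2}, \eqref{it:HFcomp3} come out exactly as stated is where the care is needed. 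A secondary point to verify is that the boxes in $\CFKm(K)$ can be taken to have all their relevant Alexander gradings in $\{-1,0,1\}$ (true since $K$ has genus one), so that only $A_{-1}, A_0, A_1$ interact with each box and the local cone computation is genuinely finite.
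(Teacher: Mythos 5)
Your proposal follows essentially the same route as the paper: the mapping cone formula for $0$-surgery applied to $x\oplus\bigoplus B[k_i]$ followed by the $1$-handle K\"unneth tensor for part \eqref{it:HFcomp1}, Hedden's formula (via Proposition \ref{prop:WhiteheadK}) to get $\CFKm(\Wh(K))$ as a sum of $x$ and boxes for part \eqref{it:HFcomp2}, and the K\"unneth formula for $\CFKm(Y,J\#K)$ fed into the truncated $(-1)$-surgery mapping cone for part \eqref{it:HFcomp3}. The one bookkeeping point you flagged does need fixing: each box contributes only a \emph{one}-dimensional summand $\F_{(k_i-1/2)}$ to $\HFred(S^3_0(K))$ (from $H_*(A_0)$ after the $+1/2$ shift, with $H_*(B_0)$ seeing nothing from the box), and the two-dimensional piece $\F_{(k_i)}\oplus\F_{(k_i-1)}$ in the stated answer only appears after tensoring with $\F_{(1/2)}\oplus\F_{(-1/2)}$ for the $S^1\times S^2$ summand.
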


\begin{proof}
We begin by proving \eqref{it:HFcomp1}. We first compute $\HF^+(S^3_0(K))$. Since $K$ is genus one, we need only consider the homology of the mapping cone of
\[
\begin{tikzcd}[column sep=.5cm, row sep=1cm]
A_0 \ar[d, "v_0+h_0", bend right=0, labels=right] \\
B_0.
\end{tikzcd}
\]
We have that
\[ H_*(A_0) =\cT^+_{(0)} \oplus \bigoplus_{i=1}^n \F_{(k_i-1)}  \]
and
\[ H_*(B_0) = \cT^+_{(0)}, \]
and that $v_{0,*}$ and $h_{0,*}$ are both isomorphisms on $\cT^+$ and zero on $\bigoplus_{i=1}^n \F_{(k_i-1)}$. The grading shift in the mapping cone of $v_0+h_0$ lowers the grading of $H_*(B_0)$ by $1/2$ and raises the grading of $H_*(A_0)$ by $1/2$. Applying the K\"unneth formula for connect summing with $S^1 \times S^2$ yields the desired result.

Next we prove \eqref{it:HFcomp2}. Proposition~\ref{prop:WhiteheadK} below shows that $\CFKm(\Wh(K))$ is of the form
\[  x \oplus \bigoplus_{i=1}^n \big( B[k_i]^2 \oplus B[k_i-1]^2 \big), \]
where $x$ is in Maslov and Alexander grading $0$. The result now follows as in part \eqref{it:HFcomp1}.

Lastly, we prove \eqref{it:HFcomp3}. We begin by applying the K\"unneth formula to determine $\CFKm(Y, J \# K)$, which we see is of the form
\[ \CFKm(Y, J) \oplus \bigoplus_{i=1}^n \Big( B[k_i-\tfrac{1}{2} ]^2 \oplus B[k_i+\tfrac{1}{2} , 1] \oplus B[k_i-\tfrac{3}{2}, -1] \Big) \]
where $B[k_i, j]$ denotes the $1 \times 1$-box generated by $a, b, c, d$ with gradings and differential:
\begin{center}
\begin{tabular}{*{10}{@{\hspace{10pt}}c}}
\hline
& && $m$ && $A$ && $\d$  & \\
\hline
& $a$ && $k_i$ && $j$ && $Ub+c$\\ 
& $b$ && $k_i+1$ && $j+1$ && $d$\\ 
& $c$ && $k_i-1$ && $j-1$ && $Ud$\\ 
& $d$ && $k_i$ && $j$ && $0$\\ 
\hline
\end{tabular}
\vspace{5pt}
\end{center}

\noindent Note that $B[k_i, 0]$ is exactly $B[k_i]$. We now consider the mapping cone formula for $Y_{-1}(J \# K)$. Since the genus of $J \# K \subset Y$ is two, the truncated mapping cone takes the form
\[
\begin{tikzcd}[column sep=1cm, row sep=1cm]
& A_{-1} \ar[dl, "h_{-1}", labels=right]  \ar[d, "v_{-1}", labels=right] & A_0 \ar[dl, "h_0", labels=right]  \ar[d, "v_0", labels=right] & A_1 \ar[dl, "h_1", labels=right]  \ar[d, "v_1", labels=right]\\
B_{-2} & B_{-1} & B_0 & B_1.
\end{tikzcd}
\]
We know that the $\CFKm(Y,J)$ summand of $\CFKm(Y, J \#K)$ contributes $\cT^+_{1/2} \oplus \cT^+_{-1/2}$ to $\HF^+(Y_{-1}(J \# K))$, since $Y_{-1}(J) = S^1 \times S^2$.  (In general, a summand of $CFK^-(Y,J)$ does not necessarily split off as a summand in the mapping cone formula, but it does in the case of a 3-manifold with $b_1 \leq 1$ and $HF_{red} = 0$.)  So we are left with determining the contribution of 
\[ C=  \bigoplus_{i=1}^n \Big( B[k_i-\tfrac{1}{2} ]^2 \oplus B[k_i+\tfrac{1}{2} , 1] \oplus B[k_i-\tfrac{3}{2}, -1] \Big). \]
Focusing on the $A_s$ complexes associated to $C$, we have 
\begin{align*}
	H_*(A_{-1}) &= \bigoplus_{i=1}^n \F_{(k_i-\tfrac{5}{2})} \\
	H_*(A_{0}) &= \bigoplus_{i=1}^n \F^2_{(k_i-\tfrac{3}{2})} \\
	H_*(A_{1}) &= \bigoplus_{i=1}^n \F_{(k_i-\tfrac{1}{2})}.
\end{align*}
The maps $v_{i,*}$ and $h_{i,*}$ are all zero on $H_*(A_s)$, $s=-1, 0,$ and $1$. For $-1$-surgery, the grading shift in the mapping cone formula raises the grading of $A_{-1}$ by 1,  raises the grading of $A_0$ by 1, and lowers the grading of $A_1$ by 1, giving the result as stated.
\end{proof}

Now that we know the Heegaard Floer homology of each of the manifolds in Figure \ref{fig:triangle}, we use gradings and exactness to prove the following lemma:

\begin{lemma}\label{lem:gradinginjection}
Let $K \subset S^3$ be a knot whose knot Floer complex is of the form $x \oplus \bigoplus_{i=1}^n B[k_i]$, where $x$ is in Maslov and Alexander grading $0$ and $B[k_i]$ is the $1 \times 1$-box. Then the 2-handle cobordism map
\[ F \colon \HF^+(S^3_0(K) \# S^1 \times S^2) \to \HF^+(S^3_0(\Wh(K))) \]
as in Figure \ref{fig:triangle} is injective on the highest graded summand of $\HFred(S^3_0(K) \# S^1 \times S^2)$.  
\end{lemma}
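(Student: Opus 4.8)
The plan is to feed the grading shift of the $2$-handle cobordism $F$ into exactness of the triangle in Figure~\ref{fig:triangle}, using the computations of Lemma~\ref{lem:HFcomp}. Write $M_1 = S^3_0(K)\#(S^1\times S^2)$, $M_2 = S^3_0(\Wh(K))$, and $M_3 = Y_{-1}(J\#K)$, and set $\kappa = \max_i k_i$ and $m = \#\{i\colon k_i = \kappa\}$. By Lemma~\ref{lem:HFcomp}(\ref{it:HFcomp1}) the highest graded summand of $\HFred(M_1)$ is then $\F^m_{(\kappa)}$, since no summand $\F_{(k_i-1)}$ reaches grading $\kappa$.

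First I would record two grading shifts. The cobordism underlying $F$ is a single $2$-handle attached along a curve that is homologically essential in $M_1$ (attaching it drops $b_1$ from $2$ to $1$), so it has $b_2 = 0$, $\sigma = 0$, and $\chi = 1$; by \cite[Theorem~7.1]{OS-triangles} it follows that $F$ lowers grading by $1/2$. (Alternatively, the full $1$- and $2$-handle cobordism of Figure~\ref{fig:1-2-hcobordism} preserves grading by Proposition~\ref{prop:grshift}, and the $1$-handle map raises grading by $1/2$.) Similarly, the cobordism underlying $\Psi\colon\HF^+(M_3)\to\HF^+(M_1)$ is a single $2$-handle, which is forced to be attached with the $0$-framing along a nullhomologous curve because $b_1$ jumps from $1$ to $2$; it therefore has $\chi = 1$, $\sigma = 0$, and trivial intersection form, so $\Psi$ also lowers grading by $1/2$.

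Then I would argue by contradiction: supposing $F(x) = 0$ for some nonzero $x\in\F^m_{(\kappa)}$, exactness of the triangle gives $\ker F = \operatorname{im}\Psi$, so $x = \Psi(z)$, and replacing $z$ by its homogeneous component of degree $\kappa + 1/2$ we may take $z\in\HF^+_{\kappa+1/2}(M_3)$, with $z\ne0$ since $x\ne0$. By Lemma~\ref{lem:HFcomp}(\ref{it:HFcomp3}) and the maximality of $\kappa$, the reduced homology $\HFred(M_3)$ vanishes in grading $\kappa+1/2$ (its top grading is $\kappa-1/2$), so $z$ lies in the tower summand $\cT^+_{(1/2)}\oplus\cT^+_{(-1/2)}$ of $\HF^+(M_3)$ — in fact inside the single tower $\cT^+_{(d_0)}$ carrying grading $\kappa+1/2$. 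Let $z = e_0, e_1, e_2, \dots$ be the elements of $\cT^+_{(d_0)}(M_3)$ at $z$ and above it, so that $Ue_{j+1} = e_j$, and set $x_j := \Psi(e_j)$. Then $U$-equivariance of $\Psi$ gives $x_0 = x$ and $Ux_{j+1} = x_j$ for all $j\ge0$, so the $\F[U]$-submodule of $\HF^+(M_1)$ generated by $\{x_0, x_1, x_2, \dots\}$ is $U$-divisible and contains the nonzero element $x$.

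The contradiction then comes from the module structure of $\HF^+(M_1)$: it splits $U$-equivariantly as its tower summand plus the finite $U$-torsion module $\HFred(M_1)$, and a finite $U$-torsion module contains no nonzero divisible submodule; hence projection onto $\HFred(M_1)$ kills every divisible submodule of $\HF^+(M_1)$, which forces $x$ into the tower summand and contradicts $0\ne x\in\F^m_{(\kappa)}\subseteq\HFred(M_1)$. Thus $F$ is injective on the top summand $\F^m_{(\kappa)}$. I expect this divisibility step to be the crux: the towers of $M_3$ are infinite-dimensional in precisely the gradings that come up here, so a bare dimension count over the exact triangle cannot on its own exclude that $\ker F$ reaches the top of $\HFred(M_1)$ — ruling this out is what requires following the $U$-module structure across $\Psi$.
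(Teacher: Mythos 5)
Your argument is correct for the statement as written and follows the same overall strategy as the paper: compute all three groups via Lemma~\ref{lem:HFcomp}, record that $F$ and $\Psi$ each lower grading by $1/2$, and use exactness to show that nothing in $\im \Psi$ can account for the top summand $\F^m_{(\kappa)}$. The difference lies in how the tower summands are treated. The paper first splits the towers off the exact triangle (noting that $\Psi$ carries the two towers of $\HF^+(Y_{-1}(J\#K))$ injectively into $\HF^+(S^3_0(K)\#S^1\times S^2)$ and that $F$ is injective on the cokernel of $\Psi$ among infinitely $U$-divisible elements), obtains an induced exact triangle on the $\HFred$'s, and then runs the grading argument purely on reduced groups. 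You instead stay in $\HF^+$ throughout and rule out tower interference by $U$-divisibility: any homogeneous preimage $z$ of $x$ under $\Psi$ must, by the grading computation, lie in a tower of $\HF^+(Y_{-1}(J\#K))$, and pushing its divisibility through the $U$-equivariant map $\Psi$ forces $x$ into the tower part of $\HF^+(S^3_0(K)\#S^1\times S^2)$, contradicting $0\neq x\in \HFred$. This is a clean, self-contained substitute for the paper's reduction step, and your diagnosis is right that the infinite towers are exactly what a bare dimension count cannot handle.

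One caveat: when the lemma is applied in the proof of Theorem~\ref{thm:exoticR4}, what is actually used is injectivity of the \emph{induced} map $\HFred(S^3_0(K)\#S^1\times S^2)\to\HFred(S^3_0(\Wh(K)))$ on the top summand, i.e., one needs $F(x)$ to be nonzero modulo the tower part of the target, not merely nonzero in $\HF^+(S^3_0(\Wh(K)))$. The paper's reduced exact triangle delivers this directly; your argument needs one further line. Since $Ux=0$, we get $UF(x)=0$, and the $U$-kernel of $\cT^+_{(1/2)}\oplus\cT^+_{(-1/2)}$ is supported in gradings $\pm 1/2$; as $F(x)$ has grading $\kappa-1/2$, it has no tower component whenever $\kappa\notin\{0,1\}$, and the remaining cases (which do occur, e.g.\ for $\Wh(T_{2,3}\#-T_{2,3})$) require splitting the towers off the triangle as in the paper (using that $F^\infty$ is onto, by the rank count $4=2+2$ on $\HF^\infty$). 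This is a short patch rather than a flaw in what you proved.
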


\begin{proof}
The proof relies on the following surgery exact triangle:

\begin{equation*}
\begin{tikzpicture}[baseline=(current  bounding  box.center)]
\node(1)at (-3,2){$\HF^+(S^3_0(K) \# S^1 \times S^2)$};
\node(2)at (3,2){$\HF^+(S^3_0(\Wh(K)))$};
\node(3)at(0,0){$\HF^+(Y_{-1}(J \# K))$,};
\path[->](1)edge node[above]{$F$}(2);
\path[->](2)edge node[right]{$\Phi$}(3);
\path[->](3)edge node[left]{$\Psi$}(1);
\end{tikzpicture}
\end{equation*}
where $J \subset Y$ is as in Figure \ref{fig:KirbyY}.

We computed the values of these Heegaard Floer homologies in Lemma \ref{lem:HFcomp}. Recall from \cite{OS-abs} that $F$ and $\Psi$ lower grading by $1/2$.  Indeed, the maps in the exact triangle consist a priori of the sum of cobordism maps over all spin$^c$ structures.  However, the Floer homology is supported in the unique torsion spin$^c$ structure for each 3-manifolds in the exact triangle, so the only potential spin$^c$ cobordism map that could contribute to $F$ (respectively $\Psi$) is the one associated to the unique spin$^c$ structure with trivial first Chern class.  The grading shift follows. We have that $\Psi$ maps the two copies of $\cT^+$ in $\HF^+(Y_{-1}(J \# K)$ injectively into $\HF^+(S^3_0(K) \# S^1 \times S^2)$. Similarly, restricting to the submodule of elements in the image of arbitrarily high powers of $U$, we have that $F$ is injective on the cokernel of $\Psi$. In particular, it suffices to consider the exact triangle on $\HFred$:
\begin{equation}\label{eq:exactred}
\begin{tikzpicture}[baseline=(current  bounding  box.center)]
\node(1)at (-3,2){$\bigoplus_{i=1}^n \big(\F_{(k_i)} \oplus \F_{(k_i-1)})\big)$};
\node(2)at (3,2){$\bigoplus_{i=1}^n \big( \F_{(k_i-\frac{1}{2})}^2 \oplus \F_{(k_i-\frac{3}{2})}^2 \big) $};
\node(3)at(0,0){$ \bigoplus_{i=1}^n \big( \F_{(k_i-\frac{1}{2})}^2 \oplus \F_{(k_i-\frac{3}{2})}^2 \big) $.};
\path[->](1)edge node[above]{$F$}(2);
\path[->](2)edge node[right]{$\Phi$}(3);
\path[->](3)edge node[left]{$\Psi$}(1);
\end{tikzpicture}
\end{equation}

Since $\Psi$ lowers grading by $1/2$, it follows that the highest graded summand of $\HFred(S^3_0(K) \# S^1 \times S^2)$ is not in the image of $\Psi$. The desired result now follows from exactness.
\end{proof}

Finally, we are ready to prove Theorem \ref{thm:0-surgerycobordism}.

\begin{proof}[Proof of Theorem \ref{thm:0-surgerycobordism}]
Item \eqref{it:0-surgerycobordism1} of the theorem is \eqref{it:HFcomp2} of Lemma \ref{lem:HFcomp}, combined with Proposition~\ref{prop:WhiteheadK}.  Item \eqref{it:0-surgerycobordism2} of the theorem follows by composing the 1-handle cobordism map with the map $F$ in Lemma \ref{lem:gradinginjection}.
\end{proof}

\subsection{Negatively clasped Whitehead doubles}
A natural question to ask is whether we can obtain a version of Theorem \ref{thm:0-surgerycobordism} \eqref{it:0-surgerycobordism2} for the \emph{negatively} clasped Whitehead double $\Wh^-$. Indeed, such a result would help us to study Casson handles with mixed signs. 

Unfortunately, the version of Theorem \ref{thm:0-surgerycobordism} \eqref{it:0-surgerycobordism2} for the negatively clasped Whitehead double is that the cobordism map from $\HF^+(S^3_0(K))$ to $\HF^+(S^3_0(\Wh^-(K)))$ is the zero map.

\begin{theorem}\label{thm:negativedouble}
If $K$ is the negative Whitehead double of a nontrivial slice knot, then the cobordism map
\[\HFred(S^3_0(K)) \to \HFred(S^3_0(\Wh^-(K)))\]
obtained by changing the sign of the clasp in Figure \ref{fig:1-2-hcobordism} is the zero map.  
\end{theorem}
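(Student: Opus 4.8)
The plan is to run exactly the same surgery exact triangle strategy used to prove Theorem \ref{thm:0-surgerycobordism}, but with the clasp reversed, and then argue by gradings that the analogue of the map $F$ must vanish on $\HFred$. First I would set up the triangle: blowing down the $-1$-framed unknot (the mirror of the picture in Figure \ref{fig:triangle}), the third manifold becomes $Y'_{+1}(J'\#K)$, where $Y' = S^3_0(T_{2,-3})$ and $J'\subset Y'$ is the mirror of the knot $J$ from Figure \ref{fig:KirbyY}; note $+1$-surgery on $J'$ still yields $S^1\times S^2$. By mirroring Lemma \ref{lem:JYcomp} we get $\CFKm(Y',J')$ for free: it is the complex of Figure \ref{fig:JY} reflected through the origin (the "$b,c,d$" generators now sitting in Maslov gradings $1/2, 3/2, 1/2$ and Alexander gradings $0, 1, 0$, with a differential $\del c = 0$, $\del a = Ub$ replaced by the mirror pattern). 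One then reruns the computations of Lemma \ref{lem:HFcomp} with this mirrored input, obtaining $\HF^+$ of all three manifolds; because the clasp sign change mirrors the box $B[k_i]$ to $B[-k_i]$ (up to overall shift), the reduced groups come out with the same shape but with the grading roles of the top and bottom summands interchanged.

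The heart of the argument is the grading bookkeeping. Write down the reduced surgery exact triangle
\begin{equation*}
\begin{tikzpicture}[baseline=(current  bounding  box.center)]
\node(1)at (-3,2){$\HFred(S^3_0(K) \# S^1 \times S^2)$};
\node(2)at (3,2){$\HFred(S^3_0(\Wh^-(K)))$};
\node(3)at(0,0){$\HFred(Y'_{+1}(J' \# K))$,};
\path[->](1)edge node[above]{$F^-$}(2);
\path[->](2)edge node[right]{$\Phi^-$}(3);
\path[->](3)edge node[left]{$\Psi^-$}(1);
\end{tikzpicture}
\end{equation*}
and track the grading shifts of $F^-$, $\Phi^-$, $\Psi^-$ exactly as in the proof of Lemma \ref{lem:gradinginjection}, using that each map is supported on the unique torsion $\spinc$ structure with $c_1 = 0$ and hence has the $\spinc$-determined shift. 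The claim to extract is that the reflected computation places \emph{every} nonzero graded piece of $\HFred(S^3_0(\Wh^-(K)))$ at a grading that, after accounting for the $F^-$-shift, does not match any grading present in $\HFred(S^3_0(K)\# S^1\times S^2)$ — equivalently, $F^-$ is forced to be the zero map purely for degree reasons (rather than being injective, as in the positively clasped case). Concretely: in the positive case $F$ raised the grading just enough to land the top $\HFred$ summand of the source onto the top summand of the target; mirroring the box flips the target's "top" summand to a grading which is now \emph{higher} than anything $F^-$ can reach from the source, so $F^-$ is zero on that summand, and then one checks the remaining summands pair up so that exactness forces $\Phi^-$ and $\Psi^-$ to absorb everything, leaving $F^- = 0$. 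Finally, composing with the (grading-preserving-up-to-the-standard-$\pm 1/2$) 1-handle map as in the proof of Theorem \ref{thm:0-surgerycobordism} \eqref{it:0-surgerycobordism2} transfers this to the stated cobordism map $\HFred(S^3_0(K))\to\HFred(S^3_0(\Wh^-(K)))$.

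The main obstacle I anticipate is the grading bookkeeping in the mirrored version of Lemma \ref{lem:HFcomp}: one must be careful that reversing the clasp both mirrors Hedden's Whitehead-double complex $\CFKm(\Wh^-(K))$ \emph{and} changes which surgery coefficient ($+1$ versus $-1$) appears in the mapping cone, and these two effects interact in the absolute $\Q$-grading normalization (the $\pm 1/2$ and $\pm 1$ shifts in the truncated cone). A secondary subtlety is justifying that the $\CFKm(Y',J')$ summand still splits off the mapping cone — this used $b_1\le 1$ and $\HFred = 0$ in Lemma \ref{lem:HFcomp}, and the same hypotheses hold here since $Y'_{+1}(J') = S^1\times S^2$ and $\HFred(Y') = 0$, so the argument carries over verbatim. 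Once the mirrored $\HF^+$ computations are in hand, the exactness-and-grading conclusion that $F^- = 0$ should be immediate.
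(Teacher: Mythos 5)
There is a genuine gap, and it lies in the very first step: your identification of the third manifold in the surgery exact triangle is not what changing the clasp produces. Reversing the clasp alters the attaching curve of the $2$-handle (the doubled pattern), but it does not change the surgery framings $\infty, 0, +1$ on the small unknotted component in Figure \ref{fig:triangle}, nor the subsequent blow-down of the $+1$-framed unknot. So the negative-clasp analogue of Figure \ref{fig:KJinY} is obtained by the \emph{same} blow-down applied to a negatively clasped curve: the $0$-framed component becomes the figure-eight knot rather than $T_{2,3}$, and the third manifold is still $(-1)$-surgery on $J'\# K$ inside $Y' = S^3_0(4_1)$ --- not $(+1)$-surgery on the mirror of $J\#K$ inside $S^3_0(T_{2,-3})$ as you propose. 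Mirroring the entire diagram is a different operation: it replaces the companion $K$ by its mirror and reverses the orientation of the cobordism, so the map it computes is related to the one in the theorem only through the $\HF^+$/$\HF^-$ duality for orientation-reversed cobordisms, not by a direct grading reflection. This matters concretely because $\HFred(S^3_0(4_1)) = \F_{(-1/2)} \neq 0$, whereas $\HFred(S^3_0(T_{2,-3})) = 0$; the nonvanishing of $\HFred(Y')$ is exactly what inflates the rank of the third term in the triangle.

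This leads to the second problem: your proposed conclusion mechanism (a wholesale grading mismatch forcing $F^- = 0$) is both unverified and, in the correct setup, unnecessary --- and in your mirrored setup it would likely fail, since the ranks there would come out as in the positive case ($\F^{2n} \to \F^{4n} \to \F^{4n}$), where exactness forces $F$ to have rank exactly $n$ rather than $0$. The paper's argument is a pure rank count: with $Y'$ the $0$-surgery on the figure-eight, the reduced triangle becomes $\F^{2n} \to \F^{4n} \to \F^{6n} \to \F^{2n}$, and exactness alone (no gradings needed) yields $\operatorname{rank} F = \operatorname{rank}\ker\Phi = 4n - \operatorname{rank}\ker\Psi + \operatorname{rank}\Psi - 6n + \cdots \leq 0$, i.e.\ $F = 0$. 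So the correct proof is actually \emph{easier} than the positive-clasp case, but only once the figure-eight knot, and hence the extra $\F^{2n}$ of rank in the third corner, enters the picture. To repair your argument you would need to redo Lemma \ref{lem:JYcomp} for $J' \subset S^3_0(4_1)$ (as in Figure \ref{fig:J'inY'} of the paper), keep the surgery coefficient at $-1$, and then simply count ranks.
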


\begin{proof}
In this case the analog of Figure \ref{fig:KirbyY} becomes $J' \subset Y'$, where $Y'$ is $0$-surgery on the knot with a negative clasp, i.e. the figure-eight knot. Then an application of the mapping cone gives
\[ \HF^+(Y') = \cT^+_{(1/2)} \oplus \cT^+_{(-1/2)} \oplus \F_{(-1/2)}. \]
Since $Y'_{-1}(J') = S^1 \times S^2$, we have that $\HF^+(Y'_{-1}(J')) = \cT^+_{(1/2)} \oplus \cT^+_{(-1/2)}$. Hence $J' \subset Y'$ is a genus one fibered knot (fiberedness follows from the fact that the Whitehead link is fibered) on which $-1$-surgery yields a manifold with the same $d$-invariants as $Y'$ and trivial $\HFred$. Then it is straightforward to deduce that $\CFKi(Y', J')$ is the complex in Figure \ref{fig:J'inY'}, tensored with $\F[U, U^{-1}]$.

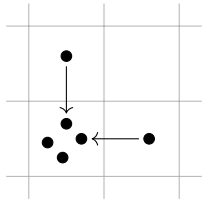
\begin{figure}[ht]
\begin{tikzpicture}[scale=1]
	\draw[step=1, black!30!white, very thin] (-0.3, -0.3) grid (2.3, 2.3);

	\filldraw (0.25, 0.45) circle (2pt) node[] (a) {};
	\filldraw (0.45, 0.25) circle (2pt) node[] (b) {};
	
	\filldraw (0.5, 1.6) circle (2pt) node[] (c) {};
	\filldraw (0.5, 0.7) circle (2pt) node[] (d) {};
	
	\filldraw (1.6, 0.5) circle (2pt) node[] (e) {};
	\filldraw (0.7, 0.5) circle (2pt) node[] (f) {};

	\draw[->] (c) to node[]{} (d);
	\draw[->] (e) to node[]{} (f);

\end{tikzpicture}
\caption{The knot Floer complex of $J' \subset Y'$.}
\label{fig:J'inY'}
\end{figure}

Applying Proposition~\ref{prop:WhiteheadK} below combined with taking mirrors, we can compute the knot Floer complex of the negative Whitehead double of $K$.  We can now run similar arguments as above (that is, applying the K\"unneth formula to find $\CFKm(Y', J' \# K)$, then applying  the mapping cone formula, followed by considering the exact triangle), and the analog of \eqref{eq:exactred} becomes

\begin{equation*}
\begin{tikzpicture}[baseline=(current  bounding  box.center)]
\node(1)at (-3,2){$\F^{2n}$};
\node(2)at (3,2){$\F^{4n}$};
\node(3)at(0,0){$\F^{6n}$.};
\path[->](1)edge node[above]{$F$}(2);
\path[->](2)edge node[right]{$\Phi$}(3);
\path[->](3)edge node[left]{$\Psi$}(1);
\end{tikzpicture}
\end{equation*}

Looking at the ranks, we see that we do not even need to keep track of gradings in order to conclude that the map $F$ is the zero map, by exactness.
\end{proof}

\section{Whitehead doubles}\label{sec:Whitehead}
In light of Theorem \ref{thm:0-surgerycobordism}, we would like to understand the knot Floer homology of Whitehead doubles.  In order to do this, we first set some notation.  

Let $\HFKhat_\red(K) = \bigoplus_{m,s} \HFKhat_{\red, m}(K,s)$ where
\begin{align*}
	\HFKhat_{\red, m}(K,s) =
	\begin{cases}
		\F_{(0)}^{-1} \oplus \HFKhat_m(K,s)  &s= \tau(K) \\
		\HFKhat_m(K,s)  &s\neq \tau(K).
	\end{cases}
\end{align*}
That is, $\HFKhat_\red(K)$ can be thought of as $\HFKhat(K)$ modulo a generator that survives in the spectral sequence from $\HFKhat(K)$ to $\HFhat(S^3)$.  We need to also understand $H_*(\cF(K,i))$ when $\tau(K)=0$. We assume that $\CFKhat(K)$ is reduced, which means we can choose a $\Z$-filtered basis for the $\Z$-filtered chain complex $\CFKhat(K)$, that is, a basis of the form
\[ x, y_i, z_i \]
for some finite index set $\{i\}_{i\in I}$ with the following gradings and differential:

\begin{center}
\vspace{5pt}
\begin{tabular}{*{10}{@{\hspace{10pt}}c}}
\hline
& && $m$ && $A$ && $\d$  & \\
\hline
& $x$ && $0$ && $0$ && $0$\\ 
& $y_j$ && $m_j$ && $A_j$ && $z_j$\\ 
& $z_j$ && $m_j-1$ && $A_j-d_j$ && $0$\\ 
\hline
\end{tabular}
\vspace{5pt}
\end{center}
With this notation, $x$ is the generator that we quotient by to obtain $\HFKhat_\red$.

\begin{proposition}\label{prop:WhiteheadK}
Let $K$ be a nontrivial knot with $\tau(K)=0$ with knot Floer complex as described above. The knot Floer complex  $\CFKm(\Wh(K))$ is of the form
\[ x \oplus \bigoplus_{j=1}^n (B[m_j-1])^{2d_j}	\]
where $x$ is in Maslov and Alexander grading zero, $B[k_i]$ is the $1 \times 1$ box in Figure \ref{fig:box}, and $\max \{m_j\} = \max \{ m \mid \HFKhat_{\red,m}(K) \neq 0 \}$.
\end{proposition}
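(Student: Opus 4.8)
The plan is to compute $\CFKm(\Wh(K))$ from Hedden's formula for the knot Floer homology of Whitehead doubles \cite{Hedden-Whitehead}, and then to promote the $\HFKhat$-level computation to a determination of the full filtered complex $\CFKm$ by a rigidity argument specific to genus-one knots. Recall that $\Wh(K)$ is a genus-one knot, so $\CFKm(\Wh(K))$ is supported in Alexander gradings $-1, 0, 1$, and its "vertical" homology must be $\F_{(0)}$ (recovering $\HFm(S^3)$). Hedden's theorem expresses $\HFKhat(\Wh(K), 1)$ (the top Alexander grading) in terms of the filtered complex of $K$: when $\tau(K) = 0$, the rank of $\HFKhat_m(\Wh(K), 1)$ is determined by the data $(m_j, A_j, d_j)$ recording the filtered differential $y_j \mapsto z_j$ in $\CFKhat(K)$. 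The key input is that each length-$d_j$ jump of each arrow $y_j \to z_j$ contributes $2d_j$ generators (counted with the Maslov grading $m_j - 1$, in the appropriate internal grading) to the top of $\Wh(K)$'s complex, while the surviving generator $x$ of $K$ contributes the single generator in grading zero that accounts for $\HFm(S^3)$.

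**First step:** carefully state Hedden's formula in the form we need, specializing to $\tau(K) = 0$. I would translate the usual statement, phrased in terms of $H_*(\cF(K,i))$ (the homology of the subcomplex of $\CFKhat(K)$ in filtration level $\le i$), into the basis-dependent description given just before the proposition: since $\CFKhat(K)$ is reduced and $\tau(K) = 0$, the relevant filtered homology groups $H_*(\cF(K, 0))$ are computed directly from the arrows $y_j \to z_j$, and Hedden's theorem gives $\widehat{\HFK}(\Wh(K), 1)$ as (a shift of) a direct sum over $j$ of copies of $\F^{2d_j}$, placed in Maslov grading $m_j - 1$ — plus corrections that organize into the box complexes $B[m_j - 1]$ once one accounts for the three Alexander gradings symmetrically.

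**Second step** — the rigidity argument, which I expect to be the main obstacle. Knowing $\HFKhat(\Wh(K))$ in each Alexander and Maslov grading does not immediately pin down the filtered chain homotopy type of $\CFKm(\Wh(K))$; one must rule out longer differentials and "diagonal" arrows. Here I would use that $\Wh(K)$ has genus one, so the $(i,j)$-plane picture only has three rows; combined with the constraints that (a) the total homology of $\CFKi$ is $\F[U, U^{-1}]$, (b) the vertical homology is $\F_{(0)}$, and (c) the horizontal homology is $\F_{(0)}$ (by the symmetry $i \leftrightarrow j$ of $\CFKi$), a change-of-basis argument forces every generator outside the distinguished $x$ to be organized into $1\times 1$ boxes $B[k]$. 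This is the standard "genus-one complexes are boxes plus a generator" principle — analogous to the classification invoked in the proof of Lemma \ref{lem:JYcomp} — and I would cite or reprove it in this setting. Matching the Maslov gradings of the boxes against the $\HFKhat$ computation from the first step then forces exactly $2d_j$ boxes $B[m_j - 1]$ for each $j$, and forces $x$ into bigrading $(0,0)$.

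**Final step:** identify $\max\{m_j\}$ with $\max\{m \mid \HFKhat_{\red, m}(K) \ne 0\}$. Since $\tau(K) = 0$, the generator $x$ sits in Maslov grading $0$, so $\HFKhat_{\red}(K) = \bigoplus_j (\F_{(m_j)} \oplus \F_{(m_j - 1)})$ in Alexander gradings $A_j$ and $A_j - d_j$ respectively (with an extra $\F_{(0)}$ in grading $0$ coming from the definition of $\HFKhat_\red$ at $s = \tau(K) = 0$). Because $K$ is nontrivial, at least one $y_j, z_j$ pair exists, and $\max\{m_j\}$ is then visibly the top nonzero Maslov grading of $\HFKhat_\red(K)$ — unless the $\F_{(0)}$ correction dominates, but that only happens in a degenerate case that one checks is excluded by nontriviality (or else the stated maximum still holds since $m_j \ge 0$ is impossible to beat only when... ) — I would verify the bookkeeping here to confirm the clean statement $\max\{m_j\} = \max\{m \mid \HFKhat_{\red, m}(K) \ne 0\}$. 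The main subtlety throughout is the rigidity step; the rest is careful translation of Hedden's formula and grading bookkeeping.
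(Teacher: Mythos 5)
Your overall strategy is the same as the paper's: compute $H_*(\cF(K,i))$ from the reduced basis $x, y_j, z_j$, feed this into Hedden's Theorem~1.2 to get $\HFKhat(\Wh(K))$ (observing that the contribution of $x$ cancels the $\F^{-2g-2}, \F^{-4g-3}, \F^{-2g-2}$ terms up to a single $\F_{(0)}$), and then upgrade to the full complex $\CFKm(\Wh(K))$. Your first step and your final bookkeeping are essentially correct (modulo a sign slip: the $\F^{-1}_{(0)}$ in the definition of $\HFKhat_\red$ \emph{removes} the generator $x$, so the gradings of $\HFKhat_\red(K)$ are exactly $\{m_j\}\cup\{m_j-1\}$ and the identification $\max\{m_j\}=\max\{m \mid \HFKhat_{\red,m}(K)\neq 0\}$ is immediate once $n\geq 1$, which nontriviality of $K$ guarantees via genus detection).

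The gap is in your rigidity step, which you correctly flag as the main obstacle but do not resolve. The ``genus-one complexes are boxes plus a generator'' principle is false under the hypotheses you list: the left-handed trefoil complex (generators $a,b,c$ with $\partial a = b$, $\partial c = Ub$, exactly the $a,b,c$ part of Figure~\ref{fig:JY}) is genus one, has vertical homology $\F$, horizontal homology $\F$, and total homology $\F[U,U^{-1}]$, yet is a staircase rather than a generator plus boxes. So you need at least $\tau(\Wh(K))=0$ in addition. More seriously, even with $\tau=0$ and the bigraded vector space $\HFKhat(\Wh(K))$ in hand, the filtered chain homotopy type is not pinned down by your constraints: a genus-one complex can a priori contain length-two vertical differentials ($A=1\to A=-1$), length-two horizontal differentials, and diagonal components, and the Maslov gradings of the computed $\HFKhat(\Wh(K))$ do not exclude these whenever the $m_j$ take adjacent values (a length-two vertical arrow from $\F_{(m_j)}$ at $s=1$ would land in grading $m_j-1$ at $s=-1$, which is occupied whenever some $m_k=m_j+1$). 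Note that Lemma~\ref{lem:JYcomp}, which you cite as an analogue, relies on the very restrictive L-space-knot staircase classification, which is not available here. The paper does not attempt a self-contained classification argument; it instead invokes the arguments of \cite[Section~9.1]{CHH-filtering}, which extract the full filtered complex of the double from Hedden's computation rather than from the bigraded homology alone. To complete your proof you would either need to supply that finer input or give a genuine classification of genus-one complexes with the stated $\HFKhat$, vertical/horizontal homology $\F_{(0)}$, and $\tau=0$, ruling out the longer and diagonal differentials by an explicit change-of-basis argument.
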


\begin{remark}\label{rem:t2n}
Observe that for $K_n = T_{2,n} \# -T_{2,n}$ with $n \geq 3$ and $n$ odd, we have 
\[ \max \{ m \mid \HFKhat_m(K_n) \neq 0 \} = n-1. \]
This follows from the fact that $K_n$ is alternating, which by \cite{OS-alternating} and \cite{Petkova-thin} implies that the knot Floer complex is determined by the Alexander polynomial and signature.
\end{remark}

The proof of this proposition will rely on the following very useful result of Hedden:

\begin{theorem}[{\cite[Theorem 1.2]{Hedden-Whitehead}}]\label{thm:Hedden-Whitehead}
Let $K$ be a slice knot of genus $g$. Then
\[ \HFKhat_*(\Wh(K), s) = 
\begin{cases}
	 \F^{-2g-2}_{(1)} \oplus \bigoplus_{i=-g}^{g} [H_{*-1}(\cF(K, i))]^2 \quad &s=1 \\
	 \F^{-4g-3}_{(0)} \oplus \bigoplus_{i=-g}^{g} [H_{*}(\cF(K, i))]^4 \quad &s=0 \\
	 \F^{-2g-2}_{(-1)} \oplus \bigoplus_{i=-g}^{g} [H_{*+1}(\cF(K, i))]^2 \quad &s=-1.
\end{cases}
\]
\end{theorem}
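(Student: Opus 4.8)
The plan is to compute the filtered chain homotopy type of $\CFKhat(\Wh(K))$ and then read off its associated graded homology. I would do this with bordered Floer homology, realizing the Whitehead double as a satellite: if $P \subset S^1 \times D^2$ denotes the Whitehead pattern, then $\Wh(K) = P(K)$, and the exterior of $\Wh(K)$ splits along the companion torus into the exterior of $K$ (bordered by its $0$-framing, since the double is untwisted) and the exterior of $P$ in the solid torus. The pairing theorem of Lipshitz--Ozsv\'ath--Thurston then gives
\[ \CFKhat(\Wh(K)) \simeq \CFAa(S^1 \times D^2 \setminus \nu(P)) \boxtimes \CFDa(S^3 \setminus \nu(K)), \]
where the Alexander filtration on the left is read off from the second grading recorded by the (doubly-pointed) pattern module. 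Since $\Wh(K)$ has Seifert genus one, the associated graded homology is supported in Alexander gradings $\{-1,0,1\}$; this will be a useful consistency check. The historical route, which is Hedden's, instead applies the unoriented skein exact sequence at the clasp of $\Wh(K)$ to relate it to the unknot and to the $2$-cable of $K$, and then invokes his computation of the knot Floer homology of cables in terms of the filtered type of $K$; I comment on this at the end.

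There are two ingredients to compute. First, $\CFDa(S^3 \setminus \nu(K))$ with the $0$-framing is determined by the filtered chain homotopy type of $\CFKhat(K)$ through the explicit Lipshitz--Ozsv\'ath--Thurston algorithm: after putting $\CFKhat(K)$ in reduced, vertically and horizontally simplified form, the type-$D$ module is built from one elementary piece for each vertical and each horizontal arrow, together with an unstable chain whose shape depends on the framing relative to $2\tau(K)$. Because $K$ is slice we have $\tau(K) = 0$, so the $0$-framed unstable chain is in its transitional form; this is exactly why the outcome for $\Wh(K)$ is the $t = 2\tau(K)$ case of Hedden's more general $t$-twisted formula, and why the homologies $H_*(\cF(K,i))$ of the Alexander sublevels $\cF(K,i) = \{A \le i\} \subseteq \CFKhat(K)$ are the natural building blocks -- these are precisely the homologies of the subquotients of $\CFKhat(K)$ that the type-$D$ module repackages. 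Second, $\CFAa(S^1 \times D^2 \setminus \nu(P))$ (equivalently the relevant type-$D$ or $DA$ module of the pattern) is small and explicit; I would extract it from a doubly-pointed bordered Heegaard diagram for the Whitehead pattern in the solid torus, carrying along the relative Alexander grading it records.

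With both modules in hand, the plan is to form the box tensor product and organize the output by Alexander grading. In each grading $s$, the contributions of the arrow pieces of $\CFDa(S^3 \setminus \nu(K))$ should reassemble into two copies (for $s = \pm 1$) or four copies (for $s = 0$) of $H_*(\cF(K,i))$ as $i$ runs over $-g, \dots, g$, exactly matching the stated formula, while the unstable chain tensored with the local part of the pattern module produces the correction terms $\F^{-2g-2}_{(1)}$, $\F^{-4g-3}_{(0)}$, $\F^{-2g-2}_{(-1)}$ -- Hedden's shorthand for deleting the prescribed number of copies of $\F$, namely the $\HFhat(S^3)$-baseline contributions of the $\cF(K,i)$ with $0 \le i \le g$ that do not survive. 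I would fix the absolute Maslov gradings by tracking them through the tensor product, and cross-check the final answer against three independent constraints: support in $|s| \le 1$ coming from $g(\Wh(K)) = 1$; the graded Euler characteristic, which must be the Alexander polynomial $\Delta_{\Wh(K)}(t) = 1$; and the conjugation symmetry $\HFKhat_d(\Wh(K), -s) \cong \HFKhat_{d-2s}(\Wh(K), s)$, which interchanges the $s = 1$ and $s = -1$ lines.

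The main obstacle will be the bookkeeping in this last step: verifying that the box-tensor output really does reorganize, grading by grading and across all possible shapes of the simplified complex $\CFKhat(K)$, into exactly the sum of the $H_*(\cF(K,i))$ with the correct multiplicities and Maslov shifts, and in particular handling the transitional $\tau(K) = 0$ form of the unstable chain cleanly -- this is the configuration where the general cabling and doubling formulas change regime, and it is where the genuine content lies. Getting the absolute gradings exactly right, so that the correction terms land in Maslov gradings $1$, $0$, $-1$, is fiddly but routine. In Hedden's original skein-and-cabling approach the analogous obstacle is the computation of $\HFKhat$ of the $2$-cable link via the large-surgery formula and the resolution of the skein exact triangle using the genus bound and Euler characteristic; either way, the substance of the theorem is the passage from the \emph{filtered} type of $K$ to the associated graded answer for $\Wh(K)$.
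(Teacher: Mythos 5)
This statement is not proved in the paper at all: it is quoted (in a form specialized to untwisted doubles of slice knots, with the negative exponents serving as shorthand for removing the indicated trivial summands) from Hedden's work, so there is no in-paper argument to compare yours against. The relevant comparison is therefore with Hedden's original proof, which, as you note at the end, runs the skein exact sequence at the clasp to relate $\Wh(K)$ to the unknot and the $2$-cable of $K$, computes $\HFKhat$ of the cable via the large-surgery formula in terms of the filtered complex $\cF(K,i)$, and resolves the triangle using the genus bound and Euler characteristic. Your bordered-Floer route --- pairing $\CFAa$ of the doubly-pointed Whitehead pattern with the $0$-framed $\CFDa(S^3\setminus\nu(K))$, with $\tau(K)=0$ placing the unstable chain in its transitional form --- is a genuinely different strategy, and it is a viable one: satellite/doubling formulas of exactly this shape have been recovered this way in the literature, and your consistency checks (genus-one support, $\Delta_{\Wh(K)}=1$, conjugation symmetry) are the right ones. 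What the bordered approach buys is a uniform, algorithmic computation directly from the filtered type of $K$, bypassing the skein triangle and the cable computation; what Hedden's approach buys is that it needs no pattern module and was available before the bordered package existed.

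That said, as written your text is a plan rather than a proof: the entire content of the theorem sits in the step you phrase as ``the contributions of the arrow pieces \emph{should} reassemble'' into two (resp.\ four) copies of $H_*(\cF(K,i))$ with the stated Maslov shifts, together with the correction terms produced by the unstable chain. This requires (i) writing down the pattern module for the Whitehead pattern explicitly, including its Alexander grading and enough of the $\cA_\infty$ structure for the box tensor product, and (ii) carrying out the tensor-product bookkeeping for an arbitrary vertically/horizontally simplified $\CFKhat(K)$, including the $\tau(K)=0$ unstable chain, with absolute Maslov gradings tracked. None of this is done, and the identification of the sublevel homologies $H_*(\cF(K,i))$ inside the output of the box tensor product (as opposed to, say, horizontal/vertical homologies) is exactly the point that needs an argument. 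So the approach is sound and genuinely different from Hedden's, but to count as a proof you would need to supply the pattern module and the grading-by-grading verification you have deferred, or else simply cite Hedden as the paper does.
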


\begin{proof}[Proof of Proposition \ref{prop:WhiteheadK}]
Using the basis above, we begin by understanding $H_*(\cF(K, i))$.

We first consider the contribution of $x$ to $H_*(\cF(K, i))$. The contribution is zero for $i < 0$ and $\F$ for $0 \leq i \leq g$.
Hence by Theorem \ref{thm:Hedden-Whitehead} the contribution of $x$ to $\HFKhat_*(\Wh(K),s)$ is 
\begin{align*}
	\F^{2g+2}_{(1)} \qquad &s=1 \\
	\F^{4g+4}_{(0)} \qquad &s=0 \\
	\F^{2g+2}_{(-1)} \qquad &s=-1.
\end{align*}
Note that these terms are almost exactly offset by the initial negative-exponent terms in Theorem \ref{thm:Hedden-Whitehead}, with the only remaining term being a copy of $\F$ in Maslov and Alexander grading zero.

We next observe that the contribution of each pair $y_j, z_j$ to $H_*(\cF(K, i))$ is $\F$ for $A_j-d_j \leq i \leq A_j-1$ and zero otherwise.  We now use Theorem \ref{thm:Hedden-Whitehead} to consider the contribution of each pair $y_j, z_j$ to $\HFKhat_*(\Wh(K),s)$, which is
\begin{align*}
	&\F^{2d_j}_{(m_j)} \qquad \quad s=1 \\
	&\F^{4d_j}_{(m_j-1)} \qquad  s=0 \\
	&\F^{2d_j}_{(m_j-2)} \qquad s=-1.
\end{align*}

Taking the direct sum of these results yields
\[ \HFKhat_*(\Wh(K), s) = 
\begin{cases}
	 \bigoplus_{j} \F^{2d_j}_{(m_j)} \quad &s=1 \\
	 \F_{(0)} \oplus \bigoplus_{j} \F^{4d_j}_{(m_j-1)} \quad &s=0 \\
	\bigoplus_{j} \F^{2d_j}_{(m_j-2)} \quad &s=-1 
\end{cases}
\]
Finally, the arguments as in \cite[Section 9.1]{CHH-filtering} yield the desired result. That is, for each $j$, we get $2d_j$ copies of the $1 \times 1$ box $B[m_j-1]$.
\end{proof}

\section{Proofs of Theorem \ref{thm:exoticR4}, Corollary \ref{cor:exoticR4}, Theorem \ref{thm:branching}, and Theorem \ref{thm:endsumproduct}.}

\begin{proof}[Proof of Theorem \ref{thm:exoticR4}]

Let $K$ be a nontrivial slice knot, let $D$ be a slice disk for $K$, and let $\caR$ denote the exotic $\bbR^4$ constructed by attaching $CH^+$ to the slice disk complement $(B^4,S^3)-(D,K)$ and deleting the upper boundary. Give $\caR$ its unique spin$^c$ structure $\mathfrak{s}$ and note $c_1(\mathfrak{s}) =0$. We will use the exhaustion in Lemma \ref{lem:sliceR4exhaustion} to show the highest graded summand of the end Floer homology $\HE(\caR,\mathfrak{s})$ is governed by the highest grading of $\HFKhat_\red$, from which Item (\ref{it:exoticR4-1}) and the orientation preserving case of Item (\ref{it:exoticR4-2}) immediately follow. To rule out orientation-reversing diffeomorphisms between various $\caR$, it suffices to show there is no orientation-preserving diffeomorphism between any of the $\caR$ and any slice $\R^4$ made from the simplest all-negative clasp Casson handle. Using Theorem \ref{thm:negativedouble} we show the end Floer homology of the latter vanishes, which will complete the proof.  

By Lemma \ref{lem:sliceR4exhaustion}, $\caR$ admits a compact exhaustion $X_i$, $i\ge 0$, where $X_0 = B^4\setminus \nu(D)$ and $X_i = X_0 \cup T_i^+$, where $T_i^+$ is attached along a 0-framed meridian of $D$. Moreover, each $Y_i := \partial X_i \cong S_0^3(\Wh^i(K))$, and each $W_{ij}: = X_j - \text{int} \,X_i$ is obtained by attaching 1-handles and 2-handles to $Y_i$, where all 2-handles are attached along knots representing primitive elements of $H_1(Y_i\# (S^1 \times S^2))$. Thus, by Gadgil's examples of admissible handle attachments \cite[Lemma 2.3]{gadgil}, and the fact that compositions of admissible cobordisms are admissible \cite[Lemma 2.1]{gadgil}, $\{X_i\}$ is an admissible exhaustion of $\caR$. By Lemma \ref{lem:sliceR4exhaustion}, $X_i$ is diffeomorphic to $B^4\setminus \nu(\Wh^i(D))$. From the Mayer-Vietoris sequence for $B^4 = X_i \,\cup \,$(2-handle) we find $b_2(X_i) = b_3(X_i) = 0$ for all $i$.  Hence $\{X_i\}$ is a grading-admissible exhaustion of $(\caR, \mathfrak{s})$. Since $b_1(Y_i)=1$ for all $i$, Theorem \ref{thm:absgr} gives that
\[\HE(\caR, \mathfrak{s}) = \varinjlim \HFred(Y_i, {\mathfrak{s}}|_{Y_i})\left[ -1/2\right]\] has an absolute $\mathbb{Q}$-grading independent of the choice of exhaustion. 

Direct limits are unchanged under passing to subsequences, and so we may take the first module in the directed system to be $\HFred(Y_1, {\mathfrak{s}}|_{Y_1})\left[ -1/2\right]$. We have $Y_1 \cong S^3_0(\Wh(K))$, and $\Wh(K)$ is nontrivial since $K$ is nontrivial (e.g., by considering bridge number as in \cite{schultensbridge, horst}). Thus, by Proposition \ref{prop:WhiteheadK}, we conclude the knot Floer complex $\CFK^-(\Wh(K))$ is of the form
\[ x \oplus \bigoplus_{i=1}^n B[k_i]	,\]
 with at least one box. Applying the mapping cone formula for 0-surgeries, we find $\HFred(Y_1,\s|_{Y_1}) \ne 0$, and the highest graded summand is $\mathbb{F}^k_{(b-\frac{1}{2})}$, where
\[b := \max \{ m \mid \HFKhat_{\red, m}(K) \neq 0 \}-1 .\]

Combining this with item (\ref{it:0-surgerycobordism1}) of Theorem \ref{thm:0-surgerycobordism} shows that for each $i\ge 1$, the highest graded summand of $\HFred(Y_i, {\mathfrak{s}}|_{Y_i})\left[ -1/2\right]$ is $\mathbb{F}^{k\cdot2^{i-1}}$ in grading $(b - 1)$. By item (\ref{it:0-surgerycobordism2}) of Theorem \ref{thm:0-surgerycobordism}, for each $i\ge 1$ the cobordism map 
\[F_{W_{i,i+1},\mathfrak{s}|_{W_{i,i+1}}} \colon \HFred(S^3_0(\Wh^i(K))) \to \HFred(S^3_0(\Wh^{i+1}(K)))\]
is injective on the highest graded summand of the domain. Since each $W_{i,i+1}$ is obtained by attaching a 1- and 2-handle to $Y_i\times I$, we compute that its grading shift is 0. It follows that $\HE(\caR, \mathfrak{s})$ contains an $\mathbb{F}^\infty$ summand in grading $(b - 1)$, and this is the highest grading of any nontrivial homogeneous element. 

Since the graded group $\HE(\caR,\mathfrak{s})$ is an invariant of the diffeomorphism type of $\caR$, and the standard $\bbR^4$ has vanishing end Floer homology \cite[Proposition 1.5]{gadgil}, we conclude $\caR$ is exotic. This establishes Item (\ref{it:exoticR4-1}). The orientation preserving case of Item (\ref{it:exoticR4-2}) follows immediately by our maximal grading computation. To justify the orientation reversing case of Item (\ref{it:exoticR4-2}) suppose $\caR'$ is constructed from a nontrivial slice disk complement union the simplest Casson handle with all negative clasps. Proceeding as above, we find a grading admissible exhaustion for $\caR'$ where the cobordism maps are as in Theorem \ref{thm:negativedouble}. Thus, $\HE(\caR',\s) =0$ since it is a direct limit where all maps are zero. Therefore $\caR\not\cong \caR'$ for any $\caR$.
\end{proof}

\begin{remark}\label{rem:startlate}
Since direct limits are unchanged by passing to subsequences, we could have taken the first module in the directed system to be $\HFred(Y_m, {\mathfrak{s}}|_{Y_m})\left[ -1/2\right]$ for any $m$. Thus, under the same assumptions on $K$, the conclusion that $\HE(\caR, \mathfrak{s})$ has highest graded summand $\mathbb{F}^\infty_{(b-1)}$ holds if we built $\caR$ by starting with \textit{any} disk for $\Wh^m(K)$ instead of $K$.
\end{remark}

Note that the choice of slice disk does not affect the end Floer homology, even if one uses non-isotopic disks. If one constructs slice $\R^4$'s using the same slice knot, two nondiffeomorphic slice disk complements, and $CH^+$, then the two end Floer homologies coincide.

\begin{proof}[Proof of Corollary \ref{cor:exoticR4}]
Let $K_n = T_{2,n}\#-T_{2,n}$ where $n\ge 3$ is odd, let $\CH^+$ be the Casson handle with a single positive plumbing at each stage, and let $\caR_n$ denote the ribbon $\bbR^4$ constructed by attaching $CH^+$ to the standard disk complement for $K_n$, and deleting the upper boundary. Give each $\caR_n$ its unique spin$^c$-structure $\mathfrak{s_n}$. By Theorem \ref{thm:exoticR4} and Remark \ref{rem:t2n} we find $\HE(\caR_n, \mathfrak{s}_n)$ and $\HE(\caR_m, \mathfrak{s}_m)$ are not isomorphic for odd $n\ne m$, with $n,m \ge 3$.
\end{proof}

\begin{proof}[Proof of Theorem \ref{thm:branching}.]
(\ref{it:branching1}) Let $\CH$ be a Casson handle containing an infinite positive chain and let $\caR$ be as in the statement of the theorem. Let $\caR^+$ be the exotic $\R^4$ made by attaching $\CH^+$ to the same disk complement. Let $\{X_n^+\}$ be the standard exhaustion for $\caR^+$, and notice $\caR$ has a standard exhaustion $\{X_n\}$ similar to that of Lemma \ref{lem:sliceR4exhaustion} where $X_n$ is obtained from the disk complement $B^4\setminus\nu(D)$ by attaching the first $n$ stages of plumbed 2-handles of $\CH$. This provides an admissible exhaustion of $\caR$.  Among the top stage plumbed handles of $X_n$, exactly one comes from the positive chain; by plugging all others with 2-handles attached along 0-framed meridians, we find that $X_n^+ = X_n \cup Z_n$ where $Z_n$ is the cobordism obtained from the 2-handle attachments. Thus, the cobordism maps $F_{W^+_{1,j}}$ of Theorem \ref{thm:0-surgerycobordism} factor as $F_{Z_n} \circ F_{W_{1,j}}$. By our assumption that $K$ is a Whitehead double, Theorem \ref{thm:0-surgerycobordism} shows that the maps $F_{W^+_{1,j}}$ are injective on the highest graded summand of $\HFred(S^3_0(K))$. Thus it follows that $F_{W_{1,j}}$ must be as well, hence the end Floer homology of $\caR$ is nonzero.

(\ref{it:branching2}) Suppose $\CH$ has an infinite positive and an infinite negative chain. By part (\ref{it:branching1}) above, the end Floer homology of $\caR$ is nonzero. Note that the reverse orientation manifold $\overline{\caR}$ is a slice $\R^4$ obtained by attaching a Casson handle with an infinite positive chain (the mirror image of the infinite negative chain of the initial Casson handle) to a nontrivial slice disk complement, where the knot is a Whitehead double of a nontrivial slice knot. This is still a genus 1 slice knot whose knot Floer complex $\CFK^-$ is a sum of a single generator and $1\times 1$ boxes, so by Remark \ref{rem:0-surgerycobordism}, the slice $\R^4$ made from combining this disk complement with $CH^+$ has nonvanishing end Floer homology. Thus, we may repeat the proof of part (\ref{it:branching1}) above to conclude $\overline{\caR}$ has nontrivial end Floer homology. This property is not shared by the slice $\R^4$'s of Theorem \ref{thm:exoticR4}: we saw that when their orientation is reversed, their end Floer vanishes. 
\end{proof}

\begin{remark}
In the proof above we would be able to drop the assumption that $K$ is a Whitehead double, if we knew that the injectivity conclusion of Theorem \ref{thm:0-surgerycobordism} still holds.
\end{remark}

\begin{remark}\label{rem:mixedsigns}
In the above proof, the end Floer homology was seen to not vanish in the case where the initial slice knot had been positively or negatively Whitehead doubled. In general, this allows us to extend the proof of item (\ref{it:exoticR4-1}) of Theorem \ref{thm:exoticR4} to include linear-chain Casson handles with only finitely many double points of one sign. This is because such a slice $\R^4$ is diffeomorphic to one made with $\CH^+$ or $\CH^-$ and a positive or negative Whitehead double of a nontrivial slice knot. The current method does not apply when there are infinitely many positive and negative double points in a linear-chain Casson handle.
\end{remark}

\begin{proof}[Proof of Theorem \ref{thm:endfloerzero}]
We prove a slightly more general statement. Let $K$ and $K'$ be slice knots with chosen slice disks. Let $\caR$ be obtained by combining the disk complement of $K$ with $\CH^+$ and let $\caR'$ be obtained by combining the disk complement of $K'$ with $\CH^-$. Then $\caR\natural\caR'$ has a standard admissible exhaustion $\{X_n\}$ similar to that of Lemma \ref{lem:sliceR4exhaustion}. Thus to compute the end Floer homology with the only torsion spin$^c$ structure $\s$, the relevant cobordism maps are the 1- and 2-handle cobordism maps appearing in Theorems \ref{thm:0-surgerycobordism} and \ref{thm:negativedouble}:
\[F_{W_{i,i+1},\mathfrak{s}|_{W_{i,i+1}}} \colon \HFred(S^3_0(\Wh_+^i(K))\#S^3_0(\Wh_-^i(K'))) \to \HFred(S^3_0(\Wh^{i+1}_+(K))\#S^3_0(\Wh_-^{i+1}(K'))).\]
By the K\"unneth formula and the vanishing property of the negative double cobordism map, proved in Theorem \ref{thm:negativedouble}, it follows that all of these maps are zero, hence the end Floer homology vanishes. Similarly, the relevant maps for the reverse orientation manifold are zero.

These manifolds are clearly homeomorphic to $\R^4$. They are exotic since $\caR$ made from $\CH^+$ is, and there are no inverses with respect to end-sum \cite[Corollary A.3]{gompfinfinite}. To obtain the theorem statement, note that choosing $\caR' = \overline{\caR}$ amounts to choosing $K' = -K$.
\end{proof}

To prove Theorem \ref{thm:endsumproduct} we wish to use the exhaustion constructed in Lemma \ref{lem:YxRexhaustion} to compute the end Floer homology for the nonstandard end of $(Y\times \bbR)\natural \caR_n$. However, this exhaustion is not grading admissible in the sense of Definition \ref{def:gradingadmissible}; rather, it satisfies a relative version of grading admissibility which we define presently. We will show this condition still allows the construction of a well-defined and absolutely graded direct limit invariant; moreover this condition allows the computation of $\HE(X)$ for individual ends of a 4-manifold relative to a 3-manifold ``cut''. 

\begin{definition}\label{def:relativegradingadmissible}
Let $(X, Y_0,\mathfrak{s})$ be a smooth, oriented noncompact 4-manifold with exactly one end, exactly one boundary component $Y_0$, and a spin$^c$ structure $\mathfrak{s}$. We require that $Y_0$ is closed. Suppose $\{(X_i,Y_0)\}$ is a compact exhaustion of $(X,Y_0)$ such that $X_i \subset \mathrm{int}(X_{i+1})$ for all $i$. Let $Y_i := \partial X-  Y_0$ and $W_{ij} := X_j - \mathrm{int}(X_i)$ for $j>i$, with $Y_i$ connected.  We say the exhaustion $\{(X_i,Y_0)\}$ is \textit{grading admissible} if the exhaustion $\{X_i\}$ is admissible in the sense of Gadgil, and also satisfies 
\begin{enumerate}
\item $H_2(X_i,Y_0;\mathbb{Q}) = H_3(X_i,Y_0;\mathbb{Q}) =0$ for all $i > 0$, and
\item $\mathfrak{s}|_{Y_i}$ is torsion for all $i > 0$.
\end{enumerate}
\end{definition}

\begin{theorem}\label{thm:relativegradingadmissible}
Let $(X,Y_0, \mathfrak{s})$ be a smooth 4-manifold with a spin$^c$ structure as in Definition \ref{def:relativegradingadmissible}, and suppose it admits a grading adimssible exhaustion $\{(X_i,Y_0)\}$. Then $(\HFred(Y_i, \mathfrak{s}|_{Y_i}), F_{W_{ij}})$ is a direct system, and the direct limit
\[ \HE(X,Y_0, \s) := \varinjlim \HFred(Y_i, \s|_{Y_i})[-b_1(Y_i)/2] \]
is an absolutely $\mathbb{Q}$-graded $\mathbb{F}[U]$-module that does not depend on the choice of grading admissible exhaustion.
\end{theorem}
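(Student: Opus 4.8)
The plan is to mirror the proof of Theorem~\ref{thm:absgr}, checking that the relative grading admissibility hypotheses suffice to run the same argument. The two things that need verifying are: (i) the cobordism maps $F_{W_{ij}, \s|_{W_{ij}}}$ make sense and assemble into a direct system (this follows from admissibility in Gadgil's sense, exactly as before, plus the fact that $\s|_{Y_i}$ is torsion so the gradings on $\HFred(Y_i, \s|_{Y_i})$ are defined), and (ii) the grading shift of $W_{ij}$ equals $\tfrac{1}{2}(b_1(Y_j) - b_1(Y_i))$, so that the grading shift by $-b_1(Y_i)/2$ produces a well-defined absolute grading on the direct limit. The independence of the exhaustion is then \emph{verbatim} the argument in the proof of Theorem~\ref{thm:absgr}: interleave two grading admissible exhaustions, invoke \cite[Lemma 2.5]{gadgil} for admissibility of the interleaved exhaustion, check that relative grading admissibility is also inherited by the interleaved exhaustion (both hypotheses are preserved since the $X_i$ and $X_i'$ pieces already satisfy them), and use that direct limits are unchanged under passing to subsequences.

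First I would establish the relative analog of Lemma~\ref{lem:b1XXYY}, namely that $b_1(X_i, Y_0) = b_1(Y_i)$ for each $i$. Here $b_1(X_i, Y_0)$ means $\dim_\Q H_1(X_i, Y_0; \Q)$, or rather we should phrase it via $H^1$. Consider the long exact sequence of the triple $(X_i, Y_i, \emptyset)$ relative to $Y_0$, or more directly the long exact sequence of the pair $(X_i, \partial X_i)$ combined with the decomposition $\partial X_i = -Y_0 \sqcup Y_i$. The cleanest route: use the long exact sequence of the pair $(X_i, Y_0)$ with $\Q$-coefficients, $\cdots \to H_3(X_i, Y_0) \to H_3(X_i, \partial X_i) \to H_2(Y_i) \to H_2(X_i, Y_0) \to \cdots$, where $H_3(X_i, \partial X_i) \cong H^1(X_i)$ by Poincaré-Lefschetz duality, $H_2(Y_i) \cong H^1(Y_i)$ by Poincaré duality, and the outer terms $H_3(X_i, Y_0)$ and $H_2(X_i, Y_0)$ vanish by hypothesis (1) of Definition~\ref{def:relativegradingadmissible}. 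This gives $H^1(Y_i) \cong H^1(X_i)$; combined with the fact that the inclusion $X_0 \cap$ (collar) contributes nothing in degree $\geq 2$, one extracts $b_1(X_i, Y_0) = b_1(Y_i)$. The precise bookkeeping of which relative homology group to use is the fiddly part, but it is a routine diagram chase once the exact sequences are written down.

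Next I would prove the relative analog of Proposition~\ref{prop:grshift}: for a homogeneously graded $x \in \HF^+(Y_i, \s|_{Y_i})$, we have $\gr(F_{W_{ij}, \s|_{W_{ij}}}(x)) - \gr(x) = \tfrac{1}{2}(b_1(Y_j) - b_1(Y_i))$. This uses \cite[Theorem 7.1]{OS-triangles} for the grading shift formula, $\tfrac{1}{4}(c_1(\s)^2 - 3\sigma(W_{ij}) - 2\chi(W_{ij}))$; since $\s|_{Y_i}$ and $\s|_{Y_j}$ are torsion and the intermediate spin$^c$ structures are controlled by admissibility, the $c_1^2$ term vanishes, and $\sigma(W_{ij}) = 0$ because hypothesis (1) forces $b_2^\pm(W_{ij}) = 0$ (the relative $H_2$ vanishing implies $H_2(W_{ij})$ has no contribution transverse to the boundary, so the intersection form on $W_{ij}$ is trivial over $\Q$ --- this deserves a short justification via Mayer-Vietoris for $X_j = X_i \cup W_{ij}$). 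Then $\chi(W_{ij}) = \chi(X_j, Y_0) - \chi(X_i, Y_0)$, and using $b_0 = b_2 = b_3 = b_4 = 0$ relative to $Y_0$ we get $\chi(X_k, Y_0) = -b_1(X_k, Y_0)$, so $-\chi(W_{ij})/2 = \tfrac{1}{2}(b_1(X_j, Y_0) - b_1(X_i, Y_0)) = \tfrac{1}{2}(b_1(Y_j) - b_1(Y_i))$ by the relative Lemma~\ref{lem:b1XXYY} analog. With these two ingredients the well-definedness of the grading on the direct limit is the same four-line computation as in the proof of Theorem~\ref{thm:absgr}, and exhaustion-independence is imported wholesale. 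The main obstacle, I expect, is pinning down the relative homological algebra cleanly --- in particular verifying that hypothesis (1) really does kill $\sigma(W_{ij})$ and give the Euler-characteristic identity --- since the presence of the extra fixed boundary component $Y_0$ means one cannot quote the absolute-case computations directly and must redo the Mayer-Vietoris/long-exact-sequence arguments with the pair $(-, Y_0)$ throughout.
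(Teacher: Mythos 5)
Your proposal follows the paper's route exactly: prove a relative analogue of Lemma \ref{lem:b1XXYY}, deduce the grading-shift formula of Proposition \ref{prop:grshift} with the extra step $\sigma(W_{ij})=0$, and import the interleaving/subsequence argument from Theorem \ref{thm:absgr} verbatim. However, two steps need repair.

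First, the relative Betti-number identity you assert, $b_1(X_i,Y_0)=b_1(Y_i)$, is false: for the exhaustion of $(M\times[0,\infty))\natural\caR_n$ from Lemma \ref{lem:YxRexhaustion} one has $H_1(X_i,Y_0;\Q)\cong\Q$ while $b_1(Y_i)=b_1(M)+1$. The exact sequence you actually write down (the triple $(X_i,\partial X_i,Y_0)$ plus Lefschetz and Poincar\'e duality) proves the \emph{absolute} identity $b_1(X_i)=b_1(Y_i)$, which is what the paper's Lemma \ref{lem:relativeb1XXYY} asserts and is all that is needed: from the long exact sequence of $(X_k,Y_0)$ and the hypothesis $H_2(X_k,Y_0)=H_3(X_k,Y_0)=0$ one gets $b_2(X_k)=b_2(Y_0)$ and $b_3(X_k)=b_3(Y_0)$, constant in $k$, whence $\chi(W_{ij})=\chi(X_j)-\chi(X_i)=b_1(X_i)-b_1(X_j)=b_1(Y_i)-b_1(Y_j)$. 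Your relative Euler-characteristic route happens to produce the correct difference because $b_1(X_k)-b_1(X_k,Y_0)$ is constant in $k$, but the pointwise lemma as you state it is wrong and should not be recorded as such.

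Second, the claim that hypothesis (1) forces $b_2^{\pm}(W_{ij})=0$ is the crux of the signature step and is left as a placeholder; note also that torsionness of $\s|_{Y_i}$ only makes $c_1(\s)^2$ \emph{well-defined}, not zero, so both $c_1^2=0$ and $\sigma(W_{ij})=0$ genuinely require the vanishing of the rational intersection pairing. The paper's argument is short: the long exact sequence of $(X_i,Y_0)$ gives $H_2(Y_0;\Q)\cong H_2(X_i;\Q)$ via inclusion, so every class is carried by a surface in $Y_0$ and has trivial pairing (push into the collar); hence the form on each $H_2(X_i)$ vanishes and Novikov additivity gives $\sigma(W_{ij})=0$. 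Your Mayer--Vietoris suggestion can be completed the same way (classes in $H_2(W_{ij})$ include into $H_2(X_j)$, where the form is zero), but this step must actually be carried out.
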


\begin{remark}\label{rem:relativeendinvariant}
Since direct limits are unchanged under passing to subsequences, and since any neighborhood of the end of $X$ must contain a subsequence of the $Y_i$, it follows that $\HE(X,Y_0,\mathfrak{s})$ is an asymptotic spin$^c$ diffeomorphism invariant of the end of $X$.  \end{remark}

The relative version of grading admissibility allows us to prove an analogue of Lemma \ref{lem:b1XXYY}, in the same manner. 

\begin{lemma}\label{lem:relativeb1XXYY}
Let $\{(X_i,Y_0)\}$ be a grading admissible exhaustion of $(X,Y_0)$. Then for all $i \ge 0$,
\[b_1(X_i) = b_1(Y_i).\]
\end{lemma}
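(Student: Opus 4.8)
The plan is to follow the proof of Lemma~\ref{lem:b1XXYY}, modified to account for the fact that $X_i$ now has two boundary components, the closed manifold $Y_0$ and the closed manifold $Y_i$, rather than a single one. Throughout I would work with $\Q$-coefficients, so that $b_1(\,\cdot\,)=\dim H^1(\,\cdot\,;\Q)=\dim H_1(\,\cdot\,;\Q)$.

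First I would reformulate the grading-admissibility hypotheses $H_2(X_i,Y_0;\Q)=H_3(X_i,Y_0;\Q)=0$ in terms of the pair $(X_i,Y_i)$. Since $X_i$ is a compact oriented $4$-manifold with $\partial X_i = Y_0\sqcup Y_i$, Poincar\'e--Lefschetz duality applied to this splitting of the boundary gives isomorphisms $H_k(X_i,Y_0;\Q)\cong H^{4-k}(X_i,Y_i;\Q)$, and hence $H^2(X_i,Y_i;\Q)=0$ and $H^1(X_i,Y_i;\Q)=0$. Then I would plug this into the cohomology long exact sequence of the pair $(X_i,Y_i)$,
\[ \cdots \to H^1(X_i,Y_i) \to H^1(X_i) \to H^1(Y_i) \to H^2(X_i,Y_i) \to \cdots, \]
whose flanking terms now vanish, so that $H^1(X_i)\cong H^1(Y_i)$ and therefore $b_1(X_i)=b_1(Y_i)$. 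One could equivalently run the dual argument with homology long exact sequences, exactly mirroring the proof of Lemma~\ref{lem:b1XXYY}, by first translating the hypotheses into $H_2(X_i,Y_i;\Q)=H_1(X_i,Y_i;\Q)=0$.

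I do not expect a genuine obstacle; the one point that requires care is invoking the version of Poincar\'e--Lefschetz duality appropriate to a manifold whose boundary is decomposed into the two disjoint pieces $Y_0$ and $Y_i$ --- namely $H_k(X_i,Y_0)\cong H^{4-k}(X_i,Y_i)$ rather than the absolute form $H_k(X_i,\partial X_i)\cong H^{4-k}(X_i)$ --- since it is precisely this that converts the hypotheses, which are stated for the pair $(X_i,Y_0)$, into the statements about $(X_i,Y_i)$ that appear in the long exact sequence above. As with Lemma~\ref{lem:b1XXYY}, once this translation is made the conclusion is immediate, and it feeds into the proof of Theorem~\ref{thm:relativegradingadmissible} in the same way that Lemma~\ref{lem:b1XXYY} feeds into Theorem~\ref{thm:absgr} (via the analogue of Proposition~\ref{prop:grshift}, using $-\chi(W_{ij})/2 = (b_1(X_j)-b_1(X_i))/2$ together with this lemma).
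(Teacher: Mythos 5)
Your proof is correct: Lefschetz duality for the split boundary gives $H^1(X_i,Y_i;\Q)\cong H_3(X_i,Y_0;\Q)=0$ and $H^2(X_i,Y_i;\Q)\cong H_2(X_i,Y_0;\Q)=0$, and the long exact sequence of the pair $(X_i,Y_i)$ then forces $H^1(X_i)\cong H^1(Y_i)$. This is essentially the same argument as the paper's, which runs the dual version via the long exact sequence of the triple $(X_i,\partial X_i,Y_0)$ together with $H_3(X_i,\partial X_i)\cong H^1(X_i)$ and excision.
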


\begin{proof}
Consider the following portion of the long exact sequence of the triple $(X_i, Y_0\sqcup Y_i, Y_i)$ over $\mathbb{Q}$:
\[H_3(X_i, Y_i) \to H_3(X_i, Y_0\sqcup Y_i) \to H_2(Y_0\sqcup Y_i,Y_i) \to H_2(X_i, Y_i).\]
By grading admissibility, the first and last vector spaces are trivial, hence the middle two vector spaces are isomorphic. We have
\begin{align*}
H^1(X_i) \cong H_3(X_i, Y_0\sqcup Y_i)
\cong H_2(Y_0\sqcup Y_i,Y_i)
\cong H_2(Y_0)
\cong H^1(Y_0),
\end{align*}
over $\mathbb{Q}$.
\end{proof}

\begin{proof}[Proof of Theorem \ref{thm:relativegradingadmissible}]
By Gadgil admissibility and the fact that we are working over $\mathbb{F} = \mathbb{Z}_2$, we obtain the desired direct system and direct limit. We must verify the direct limit has an absolute grading and that it is independent of the choice of exhaustion. 

First we show that the conclusion of Proposition \ref{prop:grshift} holds in our situation: the proof proceeds almost identically, the only differences being that we use Lemma \ref{lem:relativeb1XXYY} to conclude $\chi(W_{ij}) = b_1(Y_i) - b_1(Y_j)$, and that we must show $\sigma(W_{ij}) =0$. It suffices to show the intersection pairing on each $H_2(X_i)$ is zero. Consider the following portion of the long exact sequence of the pair $(X_i, Y_0)$ over $\Q$:
\[H_3(X_i, Y_0) \to H_2(Y_0) \to H_2(X_i) \to H_2(X_i, Y_0).\]
Since the first and last terms are zero, we have $H_2(Y_0,\Q)\cong H_2(X_i, \Q)$ for any $i$, and the isomorphism is induced by inclusion.  Therefore, the intersection form for $H_2(X_i)$ is determined by what it does on surfaces in $Y_0$. Because any surface in $Y_0$ has self-intersection zero in $X_i$ (push it into the collar), we see the intersection form on $H_2(X_i)$ is zero. 

The remainder of the proof that the direct limit admits an absolute grading, and that the direct limit is independent of admissible exhaustion chosen, is identical to the proof of Theorem \ref{thm:absgr}. Note that it is clear that interweaving exhaustions rel $Y_0$ maintains the grading admissible condition.
\end{proof}

\begin{proof}[Proof of Theorem \ref{thm:endsumproduct}]
Let $M$ be a fixed closed, connected 3-manifold. We construct infinitely many smoothings of $M\times \bbR$ by end-summing with the exotic $\bbR^4$'s $\caR_n$ from Corollary \ref{cor:exoticR4}. We will distinguish these smoothings by examining their relative end Floer homologies computed from both ends.

First suppose $M$ is orientable, and consider $(M\times \bbR) \natural \caR_n$ where the end-sum is performed along a standard ray $\{p\}\times [1,\infty)\subset (Y\times \bbR)$. Fix an embedding $M = M\times\{0\}\subset(M\times \bbR) \natural \caR_n$ and let $\mathfrak{S}$ be the set of torsion spin$^c$ structures on $(M\times \bbR) \natural \caR_n$.
 Since any diffeomorphism of $(M\times \bbR) \natural \caR_n$ sends ends to ends, by Remark \ref{rem:relativeendinvariant} it must preserve the set of end floer homologies of the two ends, where we sum over torsion spin$^c$ structures:
\[\mathcal{HE}((M\times \bbR) \natural \caR_n, \mathfrak{S}) := \left\{\bigoplus_{\s \in \mathfrak{S}}\HE(M\times (-\infty,0],M,\s_L), \bigoplus_{\s \in \mathfrak{S}}\HE((M\times [0,\infty)) \natural \caR_n, M,\s_R)\right\}.\] 
(Here we write $\s_L,\s_R$ to denote the appropriate restrictions of $\s$.) The standard end has an exhaustion by product cobordisms between copies of $M$: these are clearly grading admissible in the relative sense, therefore
\[\bigoplus_{\s \in \mathfrak{S}}\HE(M\times (-\infty,0],M,\s_L) \cong \bigoplus_{\s \in \mathfrak{S}}\HFred(M,\s|_{M})[-b_1(M)/2]\]
as absolutely $\Q$-graded $\mathbb{F}[U]$-modules. Note, this has a fixed highest graded nontrivial summand, whose grading does not depend on $n$. 

Lemma \ref{lem:YxRexhaustion} yields a compact exhaustion $\{(X_i,M)\}$ for $((M\times [0,\infty)) \natural \caR_n, M)$, for which the corresponding system of cobordisms
\[W_{i,i+1} \colon M \# S^3_0(\Wh^i(K_n)) \to M \# S^3_0(\Wh^{i+1}(K_n))\] 
is grading admissible. By the K\"unneth formula for Heegaard Floer homology and item (\ref{it:0-surgerycobordism1}) of Theorem \ref{thm:0-surgerycobordism}, it follows that if $n$ is sufficiently large relatively to the gradings of $HF^+(M)$ in torsion spin$^c$ structures, then for all $i\ge 1$,
\begin{enumerate}
	\item the highest graded nontrivial summand of $\HFred(M \# S^3_0(\Wh^i(K_n))\#S^1 \times S^2)$, restricted to torsion spin$^c$ structures, has grading $(n + f(M))$, and
	\item the highest graded nontrivial summand of $\HFred(M \# Y_{-1}(J\#\Wh^i(K_n)))$, restricted to torsion spin$^c$ structures, has grading $(n + f(M)-\frac{1}{2})$,
\end{enumerate}
where $f(M)$ is a number that only depends on $\HF^+(M)$.
Using the surgery exact triangle as in the proof of Lemma \ref{lem:gradinginjection}, we find that the cobordism maps $W_{ij}$ are injective on the highest graded summand of $\HFred(M \# S^3_0(\Wh^i(K_n)))$, restricted to torsion spin$^c$ structures. Thus, by keeping track of the grading shift, we find that the highest graded nontrivial summand of $\bigoplus_{\s \in \mathfrak{S}}\HE((M\times [0,\infty)) \natural \caR_n, M,\s_R)$ lives in grading $(n + f(M) -1- b_1(M)/2)$. Thus, by considering all sufficiently large $n$, we obtain infinitely many distinct sets $\mathcal{HE}((M\times \bbR) \natural \caR_n, \mathfrak{S})$, and hence infinitely many diffeomorphism types $(M\times\R)\natural \caR_n$.

Next suppose $M$ is non-orientable: let $M'$ be the orientation double cover and consider $(M'\times \bbR)\natural_2 \caR_n$, where the end sum is performed along the lift of a fiber ray. This orientable manifold is a double cover of $(M\times \bbR) \natural \caR_n$, where the end sum is performed along a fiber ray. Note that a diffeomorphism $(M\times \bbR) \natural \caR_n \to (M\times \bbR) \natural \caR_m$ induces a diffeomorphism of the covers $(M'\times \bbR)\natural_2 \caR_n \to (M'\times \bbR)\natural_2 \caR_m$. Let $\mathfrak{S}$ be the set of torsion spin$^c$ structures on $(M'\times \bbR)\natural_2 \caR_n$. By a minor modification of Lemma \ref{lem:YxRexhaustion}, we obtain a grading admissible exhaustion of the nonstandard end, as a stack of cobordisms:
\[W_{i,i+1}: M \#_2 \,S^3_0(\Wh^i(K_n)) \to M \#_2\,S^3_0(\Wh^{i+1}(K_n)).\]
By similar reasoning as in the orientable case, by varying across sufficiently large $n$, we obtain distinct sets $\mathcal{HE}((M'\times \bbR)\natural_2 \caR_n ,\mathfrak{S}).$
\end{proof}

\begin{remark}
We used $\caR_n$ to obtain new smooth structures on $M\times \R$ for convenience; any ribbon $\R^4$'s distinguished in Theorem \ref{thm:exoticR4} with sufficiently large gradings relative to those of $\HF^+(M)$ will give distinct smoothings of $M\times \R$. Note that our approach only yields countably many smoothings.
\end{remark}

\begin{remark}\label{rmk:endsumRn}
The proof of Theorem \ref{thm:endsumproduct} allows us to distinguish between various end-sums of the form $\caR_{n_1}\natural\hdots\natural\caR_{n_k}$: using the K\"unneth formula as above we can determine the maximal grading for nontrivial elements of the respective end Floer homologies and obtain different values. For example, with notation as in Corollary \ref{cor:exoticR4}, the highest nontrivial gradings in the end Floer homologies of $\caR_a\natural \caR_b$ and $\caR_a\natural \caR_c$ differ by $|b-c|$.
\end{remark}

\bibliographystyle{alpha}
\bibliography{bib}

\end{document}